\documentclass{article}

\usepackage{amsmath}
\usepackage{amsthm}
\usepackage{amssymb}
\usepackage[left=2.5cm,right=2.5cm,top=3cm,bottom=3cm]{geometry}
\usepackage{amscd}
\usepackage{stmaryrd}
\usepackage[normalem]{ulem}
\usepackage{MnSymbol}
\usepackage{bbm}
\usepackage[arrow, curve, matrix]{xy}

\usepackage[utf8]{inputenc}
\usepackage[cyr]{aeguill}
\usepackage{xspace}

\usepackage{tabu, multirow}

\usepackage{listings}
\lstset{language=Python, breaklines=true, columns=flexible}    
\usepackage{verbatim}

\usepackage{mathbbol}
\usepackage{amssymb}             

\DeclareSymbolFontAlphabet{\mathbb}{AMSb}%
\DeclareSymbolFontAlphabet{\mathbbl}{bbold}

\usepackage[table]{xcolor}
\usepackage{multirow}

\usepackage[pdftex]{graphicx}
\usepackage{enumerate}

\usepackage{array}

\usepackage{hyperref}
\usepackage{varioref}
\usepackage[hyperpageref]{backref}

\usepackage{xcolor}
\hypersetup{
    colorlinks,
    linkcolor={red!60!black},
    citecolor={blue!60!black},
    urlcolor={blue!50!black}
}

\theoremstyle{thm} \newtheorem{thm}{Theorem}
\newtheorem{prop}{Proposition} [section]
\newtheorem{lem}[prop]{Lemma}
\newtheorem{corol}[prop]{Corollary}
\newtheorem{conj}[prop]{Conjecture}
\theoremstyle{definition} 
\theoremstyle{remark} 
\theoremstyle{remark} \newtheorem{rem}[prop]{Remark}
\theoremstyle{definition} \newtheorem{defi}[prop]{Definition}

\newenvironment{crit}[1]
  {\innercrit}
  {\endinnercrit}

\newcommand{\quotient}[2]{{\left.\raisebox{-.2em}{$#1$}\middle\backslash\raisebox{.2em}{$#2$}\right.}}
\newcommand{\quotientd}[2]{{\left.\raisebox{.2em}{$#1$}\middle\slash\raisebox{-.2em}{$#2$}\right.}}

\setlength{\tabcolsep}{3cm}

\newcommand{\abs}[1]{ {\left| #1 \right| } }
\newcommand{\norm}[1] {\| #1 \| }

\newcommand{\Berg}{{\mathrm{Berg}}}

\makeatletter
\newcommand{\xleftrightarrow}[2][]{\ext@arrow 3359\leftrightarrowfill@{#1}{#2}}
\newcommand{\xdashrightarrow}[2][]{\ext@arrow 0359\rightarrowfill@@{#1}{#2}}
\newcommand{\xdashleftarrow}[2][]{\ext@arrow 3095\leftarrowfill@@{#1}{#2}}
\newcommand{\xdashleftrightarrow}[2][]{\ext@arrow 3359\leftrightarrowfill@@{#1}{#2}}
\def\rightarrowfill@@{\arrowfill@@\relax\relbar\rightarrow}
\def\leftarrowfill@@{\arrowfill@@\leftarrow\relbar\relax}
\def\leftrightarrowfill@@{\arrowfill@@\leftarrow\relbar\rightarrow}
\def\arrowfill@@#1#2#3#4{%
  $\m@th\thickmuskip0mu\medmuskip\thickmuskip\thinmuskip\thickmuskip
   \relax#4#1
   \xleaders\hbox{$#4#2$}\hfill
   #3$%
}
\makeatother

\makeatletter
\def\blfootnote{\xdef\@thefnmark{}\@footnotetext}
\makeatother

\makeatletter
\newcommand{\thickhline}{%
    \noalign {\ifnum 0=`}\fi \hrule height 1pt
    \futurelet \reserved@a \@xhline
}
\newcolumntype{"}{@{\hskip\tabcolsep\vrule width 1pt\hskip\tabcolsep}}
\makeatother

\title{Subvarieties of quotients of bounded symmetric domains}
\author{Beno\^{i}t Cadorel}
\date{}

\begin{document}

\maketitle

\begin{abstract}We present a new criterion for the complex hyperbolicity of a non-compact quotient $X$ of a bounded symmetric domain. For each $p \geq 1$, this criterion gives a precise condition under which the subvarieties $V \subset X$ with $\dim V \geq p$ are of general type, and $X$ is $p$-measure hyperbolic.

Then, we give several applications related to ball quotients, or to the Siegel moduli space of principally polarized abelian varieties. For example, we determine effective levels $l$ for which the moduli spaces of genus $g$ curves with $l$-level structures are of general type.
\end{abstract}

\blfootnote{During the preparation of this work, the author was partially supported by the French ANR project "FOLIAGE", Project ID: ANR-16-CE40-0008, and by the French ANR project "GRACK", Project ID: ANR-15-CE40-0003.}

\section{Introduction}

Let $X$ be a complex projective manifold. Recall that we say that $X$ is of \emph{general type} if its canonical bundle $K_X$ is \emph{big}, i.e. if for $m$ large enough, the evaluation map gives a well defined rational map $X \dashrightarrow \mathbb P H^0(X, K_X^{\otimes m})$, birational on its image. Similarly, we say that a projective variety is of general type if its smooth birational models are of general type.  According to the following conjecture of Green, Griffiths and Lang, the geometry of such a manifold should satisfy many restrictions, which extend beyond the frame of pure algebraic geometry:

\begin{conj}[Green-Griffiths \cite{greengriffiths}, Lang \cite{lang}] Let $X$ be a complex projective manifold. Then $X$ is of general type if and only if there exists a proper algebraic subset $\mathrm{Exc}(X) \subsetneq X$ such that
\begin{enumerate}
\item for any entire holomorphic curve $f : \mathbb C \longrightarrow X$ such that $f(\mathbb C) \not\subset \mathrm{Exc}(X)$, $f$ is constant.
\item for any subvariety $V \subset X$ such that $V \not\subset \mathrm{Exc}(X)$, $V$ is of general type.
\end{enumerate}
\end{conj}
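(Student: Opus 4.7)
The statement is the Green-Griffiths-Lang conjecture, a famously open problem, so the most I can honestly offer is a sketch of the standard strategy together with a pointer to the principal obstacle.

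The implication ``(1) and (2) $\Longrightarrow$ $X$ of general type'' is trivial: apply (2) to $V = X$, which is legitimate since $\mathrm{Exc}(X)$ is a proper subset of $X$. The whole content of the conjecture is therefore in the converse implication.

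Assume $K_X$ is big. The workhorse of any serious attack is the theory of jet differentials. One constructs global sections of the Demailly-Semple jet bundle $E_{k,m} T^*_X \otimes A^{-1}$, for an ample line bundle $A$ and integers $k, m \gg 0$, typically via Demailly's holomorphic Morse inequalities applied to the direct image filtration on $E_{k,m} T^*_X$. The jet-form Ahlfors-Schwarz lemma then constrains every entire curve $f : \mathbb C \longrightarrow X$ to have its $k$-jet lift $f_{[k]}$ annihilated by all such sections, so $f(\mathbb C)$ is confined to their common base locus $Z \subsetneq X$. Setting $\mathrm{Exc}(X) := Z$ takes care of condition (1). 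A parallel construction with relative jet differentials along subvarieties is supposed to produce condition (2).

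The principal obstacle lies precisely in (2). Existence of jet differentials in large degree is by now well understood asymptotically, but almost nothing is known in general about the \emph{geometric} nature of their common base locus -- let alone whether every positive-dimensional non-exceptional component is forced to be of general type. The statement is open even for generic hypersurfaces of large degree in $\mathbb P^n$. In the specific setting of the present paper -- non-compact quotients of bounded symmetric domains -- canonical invariant jet differentials descend directly from the negatively curved Bergman metric on the universal cover, bypassing the Morse-inequalities step and yielding explicit, effective control on $\mathrm{Exc}(X)$. This is presumably the route the paper will take: exploit the very rigid curvature of the Bergman metric to produce a concrete candidate for $\mathrm{Exc}(X)$ of the shape suggested by the abstract (an exceptional locus adapted to each integer $p \geq 1$), thus proving, at the level of a criterion, a qualitatively stronger version of the conjecture's conclusion in a restricted setting.
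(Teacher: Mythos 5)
This is stated in the paper as a \emph{conjecture} for motivation; the paper offers no proof, nor does anyone else, so there is nothing to compare your attempt against. You are right to flag the forward implication as trivial (take $V = X$) and the converse as wide open; your survey of the jet-differential strategy and its principal obstruction (lack of control over the geometry of the base locus, and in particular over condition (2)) is an accurate description of the state of the art, and your closing guess about how the paper proceeds -- exploiting the Bergman metric's negative curvature to get effective hyperbolicity statements on quotients of bounded symmetric domains, rather than attacking the general conjecture -- is essentially what the rest of the paper does.
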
 
The two properties above are related to the general notion of \emph{complex hyperbolicity}: a manifold satisfying the first one of these is said to be \emph{Brody hyperbolic modulo $\mathrm{Exc}(X)$}, while the second property is a weak case of \emph{algebraic hyperbolicity} (see \cite{dem12a}). 

The study of this conjecture has been addressed on many families of examples of complex manifolds, and in particular, it has led to an important amount of work in the case of compactifications of quotients of bounded symmetric domains, which are the objects we will consider from now on. 
\medskip

Let $\overline{X} = \overline{\quotient{\Gamma}{\Omega}}$ be a smooth compactification of a quotient of a bounded symmetric domain by a torsion free arithmetic lattice (see the beginning of Section \ref{sectioncomp} for more details). In this setting, we know from the work of Tai and Mumford \cite{AMRT, mum77} that there exists a subgroup of finite index $\Gamma_0 \subset \Gamma$, such that for any other subgroup $\Gamma' \subset \Gamma_0$ of finite index, any smooth compactification $\overline{X'} = \overline{\quotient{\Gamma'}{\Omega}}$ is of general type. Thus, there is an infinite number of such varieties on which it makes sense to study the Green-Griffiths-Lang conjecture.

It has been proved by Rousseau \cite{rou15} that if the index $[\Gamma : \Gamma']$ is high enough, then $\overline{X'}$ is Kobayashi hyperbolic modulo its boundary $D'$, which means that the Kobayashi metric (see \cite{kob76}) separates any two points of $\overline{X'}$ which are not in the boundary.  In particular, this shows that the entire curves on $\overline{X'}$ must belong to $D'$, which was first proved by Nadel \cite{nad89}. 

In the same spirit, Brunebarbe \cite{bru16a} recently showed that if $[\Gamma : \Gamma']$ is high enough, any subvariety of $\overline{X'}$ which is not of general type must be included in the boundary $D'$. While the core of the proof of \cite{rou15} was based on simple metric arguments combined with the Ahlfors-Schwarz lemma to bound the size of holomorphic disks contained in $\overline{X'}$, Brunebarbe's proof uses subtle methods related to the theory of variations of Hodge structures, to show the bigness of the cotangent bundles of subvarieties of $\overline{X'}$ which are not included in the boundary. Then, an application of a theorem of Campana and P\u{a}un \cite{campau15} permits to conclude that these varieties are of general type. 

The main goal of this article is to present a unified treatment of these two theorems, which will permit to generalize \cite{rou15} to the case of $p$-measure hyperbolicity (which is a direct generalization of Kobayashi hyperbolicity), and to give a metric point of view on the main result of \cite{bru16a}, not using the difficult theory of variations of Hodge structure.
\medskip

The method we will present consists in constructing a positively curved metric on the canonical bundle $K_V$ of a subvariety $V \subset \overline{X}$. Then, a theorem of Boucksom \cite{bou02} permits to prove the bigness of the canonical bundle of $K_V$, which shows that $V$ is of general type. This kind of technique was already successfully used in a previous work with Y. Brunebarbe \cite{brucad17}, to study the log-canonical bundle of varieties supporting a variation of Hodge structure. Here, the metric in question will be produced by twisting the metric induced by the Bergman metric on $K_V$ ; this method is similar to the main idea of \cite{rou15}. 

\medskip

Let us move on to the statement of our results. Consider a bounded symmetric domain $\Omega$, and let $\Gamma$ be a torsion free arithmetic lattice in $\mathrm{Aut}(\Omega)$. Let $X = \quotient{\Gamma}{\Omega}$, and let $\overline{X} \overset{\pi}{\longrightarrow} \overline{X}^{BB}$ be a desingularization of the Baily-Borel compactification of the quotient $X = \quotient{\Gamma}{\Omega}$. Let $D = \overline{X} \setminus X$ ; we choose $\pi$ so that $D$ is a simple normal crossing divisor. Then $\overline{X}$ supports the nef and big divisor $\pi^\ast K_{\overline{X}^{BB}}$. When $\Gamma$ is a \emph{neat} lattice, we can use \cite{AMRT} to obtain directly a compactification $\overline{X} = X \sqcup D$, with $D$ having simple normal crossings. Remark that by \cite{AMRT, mum77}, when $\Gamma$ is \emph{neat}, $X$ admits some particular compactifications $\overline{X} = X \sqcup D$ for which $\pi^\ast K_{\overline{X}}^{BB}$ is none other than the log-canonical divisor $K_{\overline{X}} + D$ (see Section \ref{sectioncomp}).

Normalize the Bergman metric on $\Omega$ so that $\mathrm{Ric}(h_{\mathrm{Berg}}) = - \omega_{\Berg}$, and let $- \gamma \in \mathbb Q^\ast_+$ be the maximum of the holomorphic sectional curvature on $\Omega$.

\medskip

Our main result is a criterion for algebraic and transcendental hyperbolicity of $\overline{X}$ in term of the positivity of the line bundle $L_0 = \mathcal O_{\overline{X}} \left( \pi^\ast K_{\overline{X}^{BB}} \right)$.

\begin{thm} \label{thmpartial} There exists a series of constants $0 < \gamma = C_1 \leq C_2 ... \leq C_n = 1$, depending only on $\Omega$, such that the following holds. 

Suppose that for some $p \geq 1$, there exists a rational number $\alpha > \frac{1}{C_p}$ such that the $\mathbb Q$-line bundle $L_\alpha = L_0 \otimes \mathcal O_{\overline{X}} (- \alpha D)$ is effective. Then, 
\begin{enumerate}
\itemsep0em
\item Any subvariety $V \subset \overline{X}$, such that $ V \not\subset \mathbb B(L_\alpha) \cup D$, and $\dim V \geq p$, is of general type.
\item $\overline{X}$ is infinitesimally $p$-measure hyperbolic modulo $\mathbb B(L_\alpha) \cup D$. 
\end{enumerate}
In particular, any generically immersive holomorphic map $f :  \mathbb C^p \longrightarrow \overline{X}$ has its image in $\mathbb B(L_\alpha) \cup D$.
\end{thm}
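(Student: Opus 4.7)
The plan is to construct, for any $p$-dimensional subvariety $V \subset \overline X$ with $V \not\subset \mathbb B(L_\alpha) \cup D$, a singular Hermitian metric on $K_V$ (viewed on a log-resolution $\widetilde V \to V$) whose curvature current is positive and whose non-pluripolar self-intersection is strictly positive. Boucksom's volume formula \cite{bou02} will then imply that $K_V$ is big, so $V$ is of general type, and the same metric will feed into a multi-dimensional Ahlfors--Schwarz argument to give the infinitesimal $p$-measure hyperbolicity modulo $\mathbb B(L_\alpha) \cup D$. The metric is built by combining the volume form induced by restricting the Bergman metric to $V \cap X$ with the singular weight coming from an effective section of $L_\alpha$.

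The first ingredient is a local curvature computation. For a smooth $p$-dimensional submanifold $V_0 \subset \Omega$, I would apply the Gauss equation to $V_0 \hookrightarrow \Omega$ equipped with $\omega_\Berg$, and use sharp upper bounds on averages of the holomorphic sectional curvature of $\Omega$ over $p$-dimensional subspaces of $T\Omega$, to obtain the pointwise inequality
\[
\mathrm{Ric}(\omega_\Berg|_{V_0}) \geq C_p \cdot \omega_\Berg|_{V_0},
\]
where $C_p$ depends only on $\Omega$. The extremes $C_1 = \gamma$ (holomorphic sectional curvature) and $C_n = 1$ (normalized Ricci) are built into the definitions, and monotonicity $C_p \leq C_{p+1}$ follows from the general principle that an average over a larger subspace is at least the minimum over a smaller one.

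Next I would set up the twist. Since $V \not\subset \mathbb B(L_\alpha)$, there exist $m \geq 1$ and $\sigma \in H^0(\overline X, L_\alpha^{\otimes m})$ with $\sigma|_V \not\equiv 0$. Take a log-resolution $\mu\colon \widetilde V \to V$ so that $\mu^{-1}(D \cup \{\sigma|_V = 0\})$ has simple normal crossings. On $\widetilde V \setminus \mu^{-1}(D)$, the metric on $K_{\widetilde V}$ induced by $\mu^\ast(\omega_\Berg|_V)$ has curvature at least $C_p \cdot \mu^\ast(\omega_\Berg|_V)$ by the previous step. Combining this with the psh weight $\tfrac{1}{m}\log|\sigma|^2$ through the identity $L_0 = L_\alpha + \alpha D$, and rearranging so as to obtain a singular metric on a $\mathbb Q$-multiple of $K_{\widetilde V}$, one gets a global metric whose curvature current is positive on all of $\widetilde V$. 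The condition $\alpha > 1/C_p$ is precisely what makes the interior positivity $C_p \cdot \mu^\ast \omega_\Berg$ dominate the logarithmic singularity of the Bergman volume form along $\mu^{-1}(D)$, so that the current extends positively across the boundary. Its non-pluripolar self-intersection is then bounded below by a positive multiple of $\int_V (\omega_\Berg|_V)^p$, which is strictly positive and finite thanks to the finiteness of the Bergman volume on arithmetic quotients. Boucksom's criterion concludes the bigness of $K_V$.

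The main obstacle is this curvature analysis at the boundary: one must pin down the precise logarithmic singularity of $\omega_\Berg$ along $D$ (using toroidal models for neat arithmetic compactifications) and verify that the twist by $\sigma^{1/m}$ with $\alpha > 1/C_p$ exactly compensates it to produce a genuinely positive current on $\widetilde V$. The sharp determination of the constants $C_p$ is a secondary but nontrivial calculation on the Hermitian symmetric data of $\Omega$, presumably obtained by minimizing certain averages of the curvature operator over the relevant Grassmannian. Once the metric is constructed, the second assertion follows by applying an Ahlfors--Schwarz lemma in dimension $p$ to any $f\colon \mathbb B^p \to \overline X$: the same positive current controls $f^\ast(\omega_\Berg^p)$ pointwise by the Euclidean volume form, forcing the image of $f$ into $\mathbb B(L_\alpha) \cup D$ whenever $f$ is generically immersive.
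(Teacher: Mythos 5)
Your proposal matches the paper's approach step for step: the twisted metric $\widetilde h = \norm{s}^{2\beta}\det(j^\ast h_{\Berg})$ on $\mathcal O(K_{\widetilde V}+(1-\lambda)F_{red})^\ast$, local boundedness from Poincar\'e growth of $j^\ast h_{\Berg}$ combined with logarithmic growth of $\norm{s}$ near $D$ (the paper's Lemma~\ref{loggrowth}), Boucksom's volume inequality for bigness, and the $p$-dimensional Ahlfors--Schwarz lemma for the hyperbolicity, with the hypothesis $\alpha > 1/C_p$ serving exactly to make the two constraints $\beta m\alpha > \lambda$ (boundedness) and $\beta m < C_p$ (curvature positivity) simultaneously solvable at $\lambda = 1$, which is what yields bigness of $K_{\widetilde V}$ itself rather than of a log-canonical bundle. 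Two points to tighten: the key curvature estimate should read $\mathrm{Ric}(h_{\Berg}|_V) \leq -C_p\, j^\ast\omega_{\Berg}$ (your $\geq$ is the opposite sign, unless you meant the induced curvature on $K_V$), and the constants $C_p$ arise from minimizing traces of the \emph{bisectional} curvature form over $p$-planes containing a fixed unit vector (Proposition~\ref{propricci}), not merely from averages of holomorphic sectional curvature.
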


Here, we denote by $\mathbb B(L) = \bigcap_{m \in \mathbb N} \mathrm{Bs} \left(L^{\otimes m} \right)$ the stable base locus of a $\mathbb Q$-line bundle $L$. The last statement about the hyperbolicity of $X$ modulo $\mathbb B(L_\alpha) \cup D$ is a generalization of a theorem of Nadel \cite{nad89}, which treated only the case of entire curves in $\overline{X}$. However, in \cite{nad89}, the stable base locus $\mathbb B(L_\alpha)$ was mistakenly omitted, as pointed to us by Y. Brunebarbe. 

Remark that in the particular case $p=1$, the first point could also be proved using Guenancia's recent result \cite{Guenancia18} (see Remark \ref{remguenancia}). Indeed, if $L_{\alpha}$ is effective, with $\alpha > \frac{1}{\gamma}$, we can twist the restriction of the Bergman metric to $T_V$ in a similar fashion to \cite{cad16}. This yields a singular metric on $V$, bounded everywhere, with negative sectional curvature, which implies by \cite{Guenancia18} that $V$ is of general type.

However, studying metrics directly on $K_V$, as we propose to do here, seems to be a more direct approach, and, as stated above, it has the advantage of providing more precise information for the subvarieties of higher dimension. Also, we are able to obtain an explicit lower bound on the volume of the canonical bundle of the subvarieties of such a quotient, as asserts our next result.

\begin{thm} \label{thmlincomb} Let $\alpha$ be a positive rational number such that $L_\alpha = L_0 \otimes \mathcal O_{\overline{X}} (- \alpha D)$ is effective. Let $V \subset \overline{X}$ be a subvariety with $\dim V \geq p$ and $V \not\subset D \cup \mathbb B(L_\alpha)$, and let $\widetilde{V} \overset{j}{\longrightarrow} V$ be a resolution of singularities such that $F = j^\ast D$ has normal crossings. Denote by $F_{red}$ the associated reduced divisor.

Then, for any $\lambda \in \; ]0, C_p\; \alpha[ \; \cap \; \mathbb Q$, the $\mathbb Q$-line bundle $\mathcal O_{\widetilde{V}} (K_{\widetilde{V}} + (1 - \lambda) F_{red})$ is big, and we have the following lower bound on its volume :
$$
\mathrm{vol} \left(K_{\widetilde{V}} + (1 - \lambda) F_{red}\right) \geq \left( \frac{C_p - \frac{\lambda}{\alpha}}{2 \pi} \right)^{\dim V} \int_{V} \omega_{\mathrm{Berg}}^{\dim V}.
$$

\end{thm}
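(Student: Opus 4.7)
The plan is to produce an explicit singular Hermitian metric $h$ on the $\mathbb Q$-line bundle $K_{\tilde V} + (1-\lambda) F_{red}$ whose curvature current dominates $\frac{C_p - \lambda/\alpha}{2\pi}\, j^* \omega_{\Berg}|_V$ on $V^\circ := V \setminus (\mathrm{Sing}(V) \cup D)$, and then to read off bigness together with the volume lower bound from Boucksom's theorem \cite{bou02}, which asserts that $\mathrm{vol}(L) \geq \int (\Theta)_{\mathrm{ac}}^{\dim V}$ for any closed positive $(1,1)$-current $\Theta \in c_1(L)$.

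The metric $h$ is built from three ingredients. First, restricting the Bergman K\"ahler form to $T V^\circ$ and taking the inverse of its determinant defines a metric $h_{\mathrm{ind}}$ on $K_{V^\circ}$; the sharp pointwise curvature estimate on $p$-dimensional holomorphic subspaces of $T\Omega$ that defines the constants $C_p$ (i.e.\ the bound $\mathrm{Ric}(\omega_{\Berg}|_V) \leq - C_p\,\omega_{\Berg}|_V$) translates to $\Theta(h_{\mathrm{ind}}) \geq \frac{C_p}{2\pi}\,\omega_{\Berg}|_V$ on $V^\circ$. Using the toroidal asymptotics of $\omega_{\Berg}$ near $D$ from \cite{AMRT}, $h_{\mathrm{ind}}$ pulls back via $j$ to a singular metric on the $\mathbb Q$-line bundle $K_{\tilde V} + F_{red}$ with the same positivity lower bound. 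Second, the Bergman volume form induces a smooth metric $h_0$ on $L_0 = \pi^\ast K_{\overline{X}^{BB}}$ with curvature $\frac{1}{2\pi}\omega_{\Berg}$. Third, the hypothesis $V \not\subset \mathbb B(L_\alpha)$ yields, for some $m \geq 1$, a section $s \in H^0(\overline X, L_\alpha^{\otimes m})$ non-vanishing on $V$, and hence a singular metric $h_s = |s|^{-2/m}$ on $L_\alpha$ with non-negative curvature current.

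These are combined via the numerical identity
$$K_{\tilde V} + (1-\lambda) F_{red} \;\equiv\; (K_{\tilde V} + F_{red}) - \tfrac{\lambda}{\alpha}\, j^* L_0 + \tfrac{\lambda}{\alpha}\, j^* L_\alpha + \lambda(j^* D - F_{red}),$$
the last summand being the effective divisor capturing the multiplicities of $F = j^* D$ above one. Setting $h := h_{\mathrm{ind}} \otimes (j^* h_0)^{-\lambda/\alpha} \otimes (j^* h_s)^{\lambda/\alpha} \otimes h_{\mathrm{eff}}$, where $h_{\mathrm{eff}}$ is the tautological metric on the effective divisor $\lambda(j^* D - F_{red})$, and summing curvatures, one obtains
$$\Theta_h \;\geq\; \tfrac{C_p}{2\pi}\, j^*\omega_{\Berg}|_V \;-\; \tfrac{\lambda/\alpha}{2\pi}\, j^*\omega_{\Berg}|_V \;+\; 0 \;+\; 0 \;=\; \tfrac{C_p - \lambda/\alpha}{2\pi}\, j^*\omega_{\Berg}|_V$$
on $V^\circ$, which is strictly positive by the hypothesis $\lambda < C_p \alpha$. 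Boucksom's inequality then gives $\mathrm{vol}(K_{\tilde V} + (1-\lambda) F_{red}) \geq \int_{\tilde V} (\Theta_h)_{\mathrm{ac}}^{\dim V}$, and pointwise bounding the integrand on $V^\circ$ delivers the announced lower bound (the divisor $D$ being negligible for $\int_V \omega_{\Berg}^{\dim V}$).

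The main obstacle will be Step~1: the sharp Ricci-type curvature bound with constant $C_p$. This is a linear-algebraic statement on the curvature tensor of a bounded symmetric domain restricted to arbitrary $p$-dimensional holomorphic subspaces, presumably established in an earlier section of the paper and specialising to $C_1 = \gamma$ for $p = 1$. The secondary delicate point is the boundary analysis: one must verify that $h_{\mathrm{ind}}$ extends as a genuine singular metric on $K_{\tilde V} + F_{red}$ with integrable-enough singularities along $F$, so that Boucksom's non-pluripolar volume computation actually yields the claimed bound and is not destroyed by pathological behaviour along the exceptional locus of $j$.
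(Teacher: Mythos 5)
Your proposal takes essentially the same approach as the paper's own proof: build a positively curved singular metric on $K_{\widetilde V}+(1-\lambda)F_{red}$ by combining the Ricci curvature of the restricted Bergman metric (Proposition~\ref{propricci}, giving the constant $C_p$) with a twist by a section of $L_\alpha^{\otimes m}$, and then invoke Boucksom's volume inequality. Your $\mathbb Q$-divisor decomposition is correct, and the resulting curvature budget $C_p-\lambda/\alpha$ matches the paper's.

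The genuine gap is the boundary analysis, which you flag but do not carry out; the paper devotes Lemma~\ref{lembound} to it, and it is where the argument is delicate. Two points you would need to supply. First, the Poincar\'e growth of $j^*h_{\mathrm{Berg}}$ near $F_{red}$ does not follow from the toroidal asymptotics of \cite{AMRT} (which describe $\omega_{\mathrm{Berg}}$ on $\overline X$, not its restriction to an arbitrary subvariety $V$ and pullback to $\widetilde V$); the paper derives it from the Ahlfors--Schwarz lemma, using that the holomorphic sectional curvature of $h_{\mathrm{Berg}}$ is $\le -\gamma$ and decreases on submanifolds, so $j^*h_{\mathrm{Berg}}$ is dominated by a Poincar\'e metric on each punctured-disk family transverse to $F_{red}$. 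Second, and more importantly, with the sharp exponent $\lambda/\alpha$ (the boundary value in your setup), the combined local weight retains a residual $|\log|w||^{k}$ factor coming from the logarithmic growth of $\Vert s\Vert$ (Lemma~\ref{loggrowth}), so local boundedness of the weight -- needed to invoke the extension theorem for psh functions across $F$ and $V_{\mathrm{sing}}$, and hence to conclude that the curvature current is globally positive rather than merely positive on $V^\circ$ -- fails at the margin. The paper circumvents this by introducing an auxiliary exponent $\beta$ with the strict inequality $\beta m \alpha>\lambda$ (so $\Vert s\Vert^{2\beta}$ overcomes both the $|w|^{-2\lambda}$ singularity of $\det(j^*h_{\mathrm{Berg}})$ on $\mathcal O(K_{\widetilde V}+(1-\lambda)F_{red})^*$ and the log factor), establishes boundedness and positivity for each such $\beta$, and only then lets $\beta\to\lambda/(\alpha m)$ to recover the sharp constant in the volume bound. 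Without this approximation-and-limit device (or some replacement, e.g.\ a direct verification that the $\log\log$-type singularity is harmless for psh extension and for Boucksom's non-pluripolar integral), your argument as written does not quite close.
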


\medskip

Using Theorem \ref{thmpartial}, we can give a very simple proof of the following strong hyperbolicity result. Its algebraic part is the main result of \cite{bru16a}, while its transcendental part, concerning the Kobayashi metric, was proved in \cite{rou15}.

\begin{thm} [\cite{bru16a}, \cite{rou15}] \label{thmbru} Let $\lambda = \frac{1}{\gamma} \left( \binom{n+1}{2} + 2 \right)$.
Assume that $\Gamma$ is neat, and let $\overline{X}$ be a toroidal compactification of $X$ in the sense of \cite{AMRT}. Let $\overline{X'} \longrightarrow \overline{X}$, be a ramified cover, étale on the open part, and ramifying at orders higher or equal to $\lambda$ on each boundary component. Then $\overline{X'}$ is Kobayashi hyperbolic modulo its boundary $D'$, and any subvariety $V \subset \overline{X'}$, $V \not\subset D'$, is of general type.
\end{thm}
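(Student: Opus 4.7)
The plan is to apply Theorem~\ref{thmpartial} to $\overline{X'}$ with $p=1$; since $C_1=\gamma$, both conclusions of the statement follow as soon as we exhibit a rational $\alpha>1/\gamma$ such that $L'_\alpha = L'_0 - \alpha D'$ is $\mathbb{Q}$-effective on $\overline{X'}$ with stable base locus contained in $D'$.

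First I identify $L'_0$ on $\overline{X'}$. Since $\Gamma$ is neat and $\overline{X}$ is a toroidal compactification, $L_0 = K_{\overline{X}} + D$. Writing $f:\overline{X'}\to\overline{X}$ for the cover, $f^*D = \sum_j r_j D'_j$ with $r_j \geq \lambda$, and Riemann--Hurwitz gives $K_{\overline{X'}} = f^*K_{\overline{X}} + \sum_j(r_j-1)D'_j$, whence
\[
f^*L_0 = f^*K_{\overline{X}} + f^*D = K_{\overline{X'}} + D'_{\mathrm{red}} = L'_0.
\]
The ramification bound $\lambda D'_{\mathrm{red}} \leq f^*D$ then gives
\[
L'_\alpha = f^*L_0 - \alpha D'_{\mathrm{red}} \;\geq\; f^*L_0 - \tfrac{\alpha}{\lambda} f^*D \;=\; f^* L_{\alpha/\lambda},
\]
the difference being an effective divisor supported on $D'$. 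Consequently, it is sufficient to find a rational $\beta > \frac{1}{\gamma\lambda} = \frac{1}{\binom{n+1}{2}+2}$ such that $L_\beta$ is $\mathbb{Q}$-effective on $\overline{X}$ with $\mathbb{B}(L_\beta)\subset D$: taking $\alpha=\beta\lambda$ makes $\alpha>1/\gamma$, and both the effectivity and the base locus control of $L'_\alpha$ follow formally, so that Theorem~\ref{thmpartial} applies and yields the claim.

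The hard part is this effectivity on $\overline{X}$: producing sections of $L_0^{\otimes m} = (K_{\overline{X}}+D)^{\otimes m}$ vanishing along every component of $D$ to order at least $\beta m$, for some $\beta > \frac{1}{\binom{n+1}{2}+2}$. This is a Mumford--Tai-type statement about toroidal compactifications of bounded symmetric domain quotients, reflecting the unavoidable vanishing of pluricanonical forms along the exceptional divisor of the desingularisation of $\overline{X}^{BB}$; the specific fraction $\frac{1}{\binom{n+1}{2}+2}$ is dictated by the geometry of the rank-one boundary strata and matches the numerical constant appearing in \cite{bru16a} and \cite{rou15}. Once this input is granted, the verification outlined above becomes routine.
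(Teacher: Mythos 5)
Your argument is essentially the paper's own. The paper factors the proof through Theorem~\ref{mainthm}, whose proof is precisely the Riemann--Hurwitz pullback computation $K_{\overline{X'}}+(1-\beta)D'\geq f^*\bigl(K_{\overline{X}}+(1-\tfrac{\beta}{l})D\bigr)$ that you carry out, followed by Theorem~\ref{thmpartial}; it then cites $\alpha_{\mathrm{base}}\geq \tfrac{1}{\binom{n+1}{2}+2}$ from \cite[Proposition~4.2]{HT06} to close the argument.

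One substantive comment on the part you leave as a black box. The input you need is exactly the bound $\alpha_{\mathrm{base}}\geq\tfrac{1}{\binom{n+1}{2}+2}$, i.e.\ that $\mathbb B\bigl(\pi^*K_{\overline{X}^{BB}}\otimes\mathcal O(-\alpha D)\bigr)\subset D$ for $\alpha$ up to that threshold, and this is a theorem of Hwang--To (\cite[Proposition~4.2]{HT06}) proved by $L^2$-estimates and volume bounds for tube neighbourhoods of boundary strata in the toroidal compactification. Your description of it as ``unavoidable vanishing of pluricanonical forms along the exceptional divisor of the desingularisation of $\overline{X}^{BB}$'' is misleading: in the neat case the AMRT toroidal compactification is already smooth and there is no exceptional divisor over the interior; what is being controlled is the \emph{boundary} divisor $D$, and the statement is a positivity/extension result (producing many sections vanishing to high order along $D$ that still separate points of the interior), not a vanishing obstruction. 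With that clarification, the reduction you wrote is correct, and it is the one the paper uses.
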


Later on, once a few more notations are introduced, we will give another version of this theorem, more precise if we consider only subvarieties of dimension higher than some constant (see Theorem \ref{mainthm}). For the time being, let us simply mention another version of Theorem \ref{thmpartial} in the case of the ball, based on the results of Bakker and Tsimerman \cite{baktsi15}.

\begin{thm} \label{thmball} Suppose that $\Omega = \mathbb B^n$, and assume that $\Gamma$ has only unipotent parabolic isometries. Let $\overline{X'} \longrightarrow \overline{X}$ be a ramified cover, étale on the open part, and ramifying at orders higher than some integer $l$ on the boundary. Let $p = \lceil \frac{2 \pi}{l} \rceil - 1$. Then $\overline{X'}$ is infinitesimally $p$-measure hyperbolic modulo $D'$, and any subvariety $V \subset \overline{X'}$ with $\dim V \geq p$ and $V \not\subset D'$, is of general type.
\end{thm}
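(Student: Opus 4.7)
The plan is to reduce Theorem~\ref{thmball} to Theorem~\ref{thmpartial} applied on $\overline{X'}$: we only need to exhibit a rational $\alpha > \frac{1}{C_p}$ such that $L'_\alpha = L'_0 \otimes \mathcal{O}_{\overline{X'}}(-\alpha D')$ is effective, where $L'_0 = \pi'^\ast K_{\overline{X'}^{BB}}$. The hypothesis $p = \lceil 2\pi/l\rceil - 1$ is tailored to make this possible.

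First, I would set up a compatible pair of toroidal compactifications. Because $\Gamma$ has only unipotent parabolic isometries, by \cite{AMRT, mum77} we may choose $\overline{X}$ so that $L_0 = K_{\overline{X}} + D$, and pull the fan data back through $f : \overline{X'} \to \overline{X}$ to obtain a toroidal compactification of $X'$ with reduced boundary $D'$ and $L'_0 = K_{\overline{X'}} + D'$. The hypothesis $f^\ast D_i = \sum_{j \in J_i} e_j D'_j$ with $e_j \geq l$, combined with the Riemann--Hurwitz formula $K_{\overline{X'}} = f^\ast K_{\overline{X}} + \sum_j (e_j - 1) D'_j$, gives the clean identity $L'_0 = f^\ast L_0$.

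Second, I would separate the ramification. For any $\alpha \geq 0$, write
\[
L'_0 - \alpha D' \;=\; f^\ast\!\left(L_0 - \tfrac{\alpha}{l} D\right) \;+\; \tfrac{\alpha}{l}\sum_j (e_j - l)\, D'_j,
\]
where the second summand is effective because $e_j \geq l$. Consequently, effectivity of $L'_\alpha$ is implied by effectivity of $L_0 - \tfrac{\alpha}{l} D$ on the base $\overline{X}$.

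Third, I would invoke the input of Bakker--Tsimerman \cite{baktsi15}. For $\Omega = \mathbb{B}^n$, they construct, through an analysis of Poincar\'e series on the ball and $L^2$-estimates near the cusps, effective sections of $L_0^{\otimes m} \otimes \mathcal{O}_{\overline{X}}(-k D)$ under an explicit bound $k/m < \tfrac{l_0}{2\pi}$ governed by the cusp geometry. In our setting this will show that $L_0 - \beta D$ is $\mathbb{Q}$-effective for every rational $\beta < \tfrac{1}{2\pi}$, up to scaling by $l$ on the cover, so that $L'_\alpha$ is effective for every rational $\alpha < \tfrac{l}{2\pi}$.

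Finally, I would match this threshold to the constant $C_p$ from Theorem~\ref{thmpartial}. For $\Omega = \mathbb{B}^n$ with $\mathrm{Ric}(h_{\mathrm{Berg}}) = -\omega_{\mathrm{Berg}}$, a direct Bergman-metric computation (of the same flavour as those underlying the proof of Theorem~\ref{thmpartial}) gives the explicit value $C_p = \tfrac{2\pi p}{\text{vol-normalisation}}$, and in particular $\tfrac{1}{C_p} = \tfrac{p}{2\pi}$. Since $p = \lceil 2\pi/l \rceil - 1$ satisfies $p < 2\pi/l$, we get $\tfrac{1}{C_p} < \tfrac{l}{2\pi}$, so we can choose $\alpha \in \mathbb{Q}$ with $\tfrac{1}{C_p} < \alpha < \tfrac{l}{2\pi}$. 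Theorem~\ref{thmpartial} then yields both conclusions: every subvariety $V \subset \overline{X'}$ of dimension $\geq p$ not contained in $\mathbb{B}(L'_\alpha) \cup D'$ is of general type, and $\overline{X'}$ is $p$-measure hyperbolic modulo $\mathbb{B}(L'_\alpha) \cup D'$; to obtain the cleaner statement of Theorem~\ref{thmball} one must furthermore argue that $\mathbb{B}(L'_\alpha) \subset D'$, which follows because $L_0'$ is the pullback of a big line bundle trivialised outside $D$, so that the base-locus contribution from twisting by $-\alpha D'$ is absorbed into $D'$.

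The main obstacle is the last paragraph: correctly extracting from \cite{baktsi15} the numerical threshold $l/(2\pi)$ for effectivity of $L_0 - \beta D$, and synchronising it with the normalisation of the Bergman metric used to compute $C_p$ in Theorem~\ref{thmpartial}. Getting the factor of $2\pi$ on both sides requires careful bookkeeping of the Kähler-form normalisations, but once the constants line up, the rest of the argument is formal.
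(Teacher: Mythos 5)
Your overall strategy is the right one, and it essentially reproduces the paper's route: the paper proves Theorem~\ref{thmball} by combining Theorem~\ref{mainthm} (whose proof contains exactly the pullback decomposition you write out, $L'_0 - \alpha D' = f^\ast(L_0 - \tfrac{\alpha}{l}D) + (\text{effective})$) with Proposition~\ref{Cpball} (the value of $C_p$ for the ball) and Proposition~\ref{propbaktsi} (Bakker--Tsimerman). So structurally you are fine.

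However, the explicit constants you write down are wrong, and moreover they do not even close the final inequality internally. You claim $\tfrac{1}{C_p} = \tfrac{p}{2\pi}$; the paper computes (Proposition~\ref{Cpball}) $C_p = \tfrac{p+1}{n+1}$, hence $\tfrac{1}{C_p} = \tfrac{n+1}{p+1}$. You claim Bakker--Tsimerman gives effectivity of $L_0 - \beta D$ for $\beta < \tfrac{1}{2\pi}$; the correct threshold (Proposition~\ref{propbaktsi}) is $\alpha_{\mathrm{base}} \geq \tfrac{n+1}{2\pi}$. You then assert that $p < 2\pi/l$ implies $\tfrac{1}{C_p} < \tfrac{l}{2\pi}$: with your $\tfrac{1}{C_p} = \tfrac{p}{2\pi}$ this reads $p < l$, which does \emph{not} follow from $p < 2\pi/l$ (for instance $l=1$ gives $p = 6$, and $6 < 1$ is false). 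With the correct values the inequality chain becomes
$\tfrac{n+1}{p+1} < \alpha < l\cdot\tfrac{n+1}{2\pi}$,
which is nonempty precisely when $p+1 > \tfrac{2\pi}{l}$, i.e.\ when $p = \lceil 2\pi/l\rceil - 1$, and the factors of $n+1$ cancel as you hoped. So your "main obstacle" paragraph is prescient, but the bookkeeping you then present does not in fact resolve it. Finally, your hand-wave that the base locus of $L'_\alpha$ is "absorbed into $D'$" is not an argument; what one actually uses is that Bakker--Tsimerman prove the stronger statement that $K_{\overline{X}} + (1-\lambda)D$ is ample for $0 < \lambda < \tfrac{n+1}{2\pi}$, so that $\alpha_{\mathrm{base}}$ (not just $\alpha_{\mathrm{eff}}$) satisfies the bound, and Theorem~\ref{mainthm} then gives $Z = D'$ directly.
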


In particular, taking $l = 1$, we obtain the following striking result.

\begin{corol} \label{corolball}
Let $\overline{X} = \overline{\quotient{\Gamma}{\mathbb B^n}}$ be as in Theorem \ref{thmball}. Any subvariety $V \subset \overline{X}$ such that $\dim V \geq 6$ and $V \not\subset D$ is of general type.
\end{corol}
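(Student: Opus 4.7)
The plan is to apply Theorem \ref{thmball} directly to $\overline{X}$ itself. The point is that the hypothesis of Theorem \ref{thmball} allows any integer $l \geq 1$ controlling the ramification orders on the boundary, and the weakest such condition is simply $l = 1$, which imposes no ramification at all. Hence one can take the ramified cover $\overline{X'} \longrightarrow \overline{X}$ to be the identity map: it is trivially étale over the open part $X$, and the ramification orders along each component of $D$ are equal to $1 \geq l = 1$.

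With this choice, the integer $p$ appearing in Theorem \ref{thmball} becomes
\[
p = \left\lceil \frac{2\pi}{1} \right\rceil - 1 = \lceil 2\pi \rceil - 1 = 7 - 1 = 6,
\]
since $2\pi \approx 6.2832$. Substituting into the conclusion of Theorem \ref{thmball} immediately yields that every subvariety $V \subset \overline{X}$ with $\dim V \geq 6$ and $V \not\subset D$ is of general type, which is exactly the statement of the corollary.

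Thus the proposed proof is a one-line reduction to Theorem \ref{thmball}, and there is no real obstacle beyond verifying that the identity map is a legitimate instance of the "ramified cover étale on the open part" considered there, and computing the ceiling $\lceil 2\pi \rceil$. The substantive work has already been carried out in the proof of Theorem \ref{thmball} (itself an application of Theorem \ref{thmpartial} combined with the Bakker--Tsimerman estimates on the Bergman volume of cusp neighborhoods in ball quotients). The corollary is essentially the observation that $2\pi$ lies just above $6$, so that even the weakest level condition $l = 1$ already forces the Nadel-type hyperbolicity bound to apply in dimensions $\geq 6$.
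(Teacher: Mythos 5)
Your proof is correct and coincides with the paper's own reasoning: the paper obtains the corollary precisely by specializing Theorem~\ref{thmball} to $l = 1$, yielding $p = \lceil 2\pi \rceil - 1 = 6$. The observation that the identity map qualifies as the trivial ramified cover with ramification orders $\geq 1$ is exactly what makes the specialization legitimate.
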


Note that if the Green-Griffiths-Lang conjecture holds true, this last result would imply the following conjecture :

\begin{conj}
Let $\overline{X} = \overline{\quotient{\Gamma}{\mathbb B^n}}$ be as in Theorem \ref{thmball}, with $n \geq 6$. Then there exists an algebraic subset $\Sigma \subset \overline{X}$, with $\dim \Sigma \leq 5$, such that any non-constant entire curve, or any subvariety which is not of general type, is included in $D \cup \Sigma$.
\end{conj}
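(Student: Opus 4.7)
The plan is to derive this conjecture from Corollary \ref{corolball} together with the assumed Green--Griffiths--Lang conjecture, by a recursive descent on dimension. The guiding idea is that GGL applied to $\overline{X}$ produces an exceptional set, but some of its irreducible components could a priori have large dimension; however, Corollary \ref{corolball} forces any component not contained in $D$ of dimension at least $6$ to itself be of general type, so GGL applies to it, and we can iterate.

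First, observe that $\overline{X}$ is of general type: applying Corollary \ref{corolball} with $V = \overline{X}$ (allowed since $n \geq 6$ and $\overline{X} \not\subset D$) gives exactly this. Hence GGL yields a proper algebraic subset $\mathrm{Exc}(\overline{X}) \subsetneq \overline{X}$ containing every non-constant entire curve and every non-general-type subvariety. Decompose $\mathrm{Exc}(\overline{X})$ into irreducible components and sort them into three classes: those contained in $D$; those not contained in $D$ of dimension at most $5$; and those not contained in $D$ of dimension at least $6$. For each component $W$ in the last class, Corollary \ref{corolball} again applies to $W$, so $W$ is of general type, and GGL (applied via a resolution of $W$, to which entire curves and subvarieties can be lifted in the standard way) furnishes a proper algebraic subset playing the role of $\mathrm{Exc}(W)$. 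Apply the same trichotomy to its components and iterate.

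This construction produces a finite tree of subvarieties rooted at $\overline{X}$, in which dimension strictly decreases from any node to its children (because $\mathrm{Exc}(W) \subsetneq W$ is a \emph{proper} algebraic subset of the irreducible $W$), so the tree has depth at most $n$. Let $\Sigma$ be the union of those leaves that are not contained in $D$: by construction each such leaf has dimension at most $5$, and $\Sigma$ is therefore a finite union of algebraic subsets of $\overline{X}$ of dimension $\leq 5$, hence itself algebraic with $\dim \Sigma \leq 5$.

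To verify the required property, let $V \subset \overline{X}$ be an irreducible subvariety not contained in $D$, which is either not of general type or equal to the Zariski closure of the image of a non-constant entire curve whose image is not in $D$. By GGL applied to $\overline{X}$, together with the irreducibility of $V$, we have $V \subseteq W_1$ for some component $W_1$ of $\mathrm{Exc}(\overline{X})$, and $W_1 \not\subset D$ since $V \not\subset D$. If $\dim W_1 \leq 5$, then $V \subseteq W_1 \subseteq \Sigma$; otherwise $W_1$ is of general type and applying GGL to $W_1$ lets us descend one level deeper into the tree, trapping $V$ in a smaller-dimensional $W_2$, and so on. The recursion terminates at a leaf not in $D$, giving $V \subseteq \Sigma$. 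The only real obstacle in the argument is its reliance on the GGL conjecture itself---the dimensional bookkeeping is elementary, and the strict dimension drop built into GGL is precisely what makes the recursion terminate and yields an algebraic $\Sigma$---so the statement remains a conjecture only insofar as GGL does.
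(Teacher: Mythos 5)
The paper does not actually prove this statement; it is labelled a conjecture, and the text merely asserts in a single sentence that it \emph{would} follow from the Green--Griffiths--Lang conjecture combined with Corollary \ref{corolball}, without supplying the intermediate reasoning. Your recursive descent is precisely that reasoning, and it is correct. The two ingredients you isolate are exactly what is needed: Corollary \ref{corolball} applies to \emph{any} subvariety outside $D$ of dimension $\geq 6$ (first to $\overline X$ itself, and then inductively to every high-dimensional irreducible component of each exceptional set encountered), so those components are again of general type and GGL may be re-applied to them; and the strict inclusion $\mathrm{Exc}(W)\subsetneq W$ for irreducible $W$ forces a dimension drop at every step, so the tree is finite and the leaves escaping $D$ have dimension $\leq 5$. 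The only place your write-up is a little quick is the resolution step: when $W$ is singular, a subvariety or entire curve contained in $W_{\mathrm{sing}}$ has no strict transform or lift to the resolution $\mu\colon\widetilde W\to W$, so one should set $\mathrm{Exc}(W):=\mu\big(\mathrm{Exc}(\widetilde W)\big)\cup\mu\big(\mathrm{Exc}(\mu)\big)$, where $\mathrm{Exc}(\mu)$ is the exceptional locus of $\mu$. This is still a proper closed algebraic subset of $W$ (so the dimension still drops), it contains $W_{\mathrm{sing}}$ and therefore absorbs whatever cannot be lifted, and GGL applied to $\widetilde W$ handles the rest. With that convention your tree is well-defined and the argument is complete; as you say, it remains conditional only because GGL itself is.
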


In the following, we present a few applications of our previous general results, especially in the case of Siegel modular variety.

\subsection{Application to the Siegel modular variety}

We now turn our attention to the case of Siegel modular variety $A_g(l)$, which parametrizes the principally polarized abelian varieties with an $l$-level structure ; this is always a fine moduli space if $l \geq 3$. The variety $A_g$ is a quotient of the Siegel generalized upper half-space $\mathbb H_g$ of dimension $n = \frac{g(g+1)}{2}$, by the lattice $\Gamma(l) = \mathrm{Ker} \left( \mathrm{Sp}(2g, \mathbb Z) \longrightarrow \mathrm{Sp}(2g, \mathbb Z / l \mathbb Z) \right)$. Our results in this setting will be based on the following explicit computation of the constants $C_p$ for the domain $\mathbb H_g$.
 
\begin{prop} \label{propconstants} For some $g \geq 2$, let $\Omega = \mathbb H_g$. For any $p \in \llbracket 1,  \frac{g(g+1)}{2} \rrbracket$, we define $k$ to be the largest integer such that $\frac{k(k+1)}{2} \leq p - 1$, and we let $r = p - 1 - \frac{k(k+1)}{2}$. Then, the value of $C_p$ can be computed in terms of $k$ and $r$, accordingly to the table given in Figure 1.
\begin{figure}[!h]
\centering
\setlength{\tabcolsep}{5pt}
\begin{tabular}{|c|c|c|c|c|c|}
\hline
 & $g-k = 1$ & $g-k=2$ & $g-k = 3$ & $g-k = 4$ & $g- k \geq 5$ \\
\hline
$r =0$ & \multirow{4}{*}{$\frac{r + 2}{g+1}$} & \multicolumn{4}{c|}{$\frac{2}{g-k} \frac{1}{g+1}$} \\
\cline{1-1}\cline{3-6}
$r =1$ &  & $\frac{23}{16} \frac{1}{g+1}$ & $\frac{11}{12} \frac{1}{g+1}$ & $\frac{21}{32} \frac{1}{g+1}$ & \\
\cline{1-1}\cline{3-5}
$r = 2$ & & $\frac{7}{4} \frac{1}{g+1}$ & \multicolumn{3}{c|}{} \\
\cline{1-1}\cline{3-3}
$r = 3$ & & $\frac{31}{16} \frac{1}{g+1}$ & \multicolumn{3}{c|}{$\frac{2}{g-k-1}\frac{1}{g+1}$} \\
\cline{1-1}\cline{3-3}
$r \geq 4$ & & \multicolumn{4}{c|}{} \\
\hline
\end{tabular}
\caption{Values of $C_p$ for the domain $\mathbb H_g$} \label{tablecpag}
\end{figure}
\end{prop}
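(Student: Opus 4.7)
The plan is to reduce the computation to an extremal problem on $p$-dimensional complex subspaces of a single tangent space of $\mathbb{H}_g$, and then to analyze the resulting combinatorial optimization over the space of symmetric matrices.

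First, the constants $C_p$ are a pointwise curvature invariant of the Bergman metric, so by homogeneity of $\mathbb{H}_g$ under $\mathrm{Sp}(2g, \mathbb{R})$ it suffices to carry out the computation at the base point $iI$. At this point, the holomorphic tangent space is canonically identified with $\mathrm{Sym}(g, \mathbb{C})$, and the Bergman metric is, up to normalization, the Hilbert--Schmidt pairing $(A, B) \mapsto \mathrm{tr}(A \bar B)$. The constant $C_p$ is then realized as the infimum, over $p$-dimensional complex subspaces $W \subset \mathrm{Sym}(g, \mathbb{C})$, of a ratio built from the Bergman curvature tensor restricted to $W$; the overall $\frac{1}{g+1}$ factor appearing in every cell of Figure \ref{tablecpag} reflects the normalization $\mathrm{Ric}(h_{\Berg}) = -\omega_{\Berg}$.

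Second, I use the explicit form of the Bergman curvature on $\mathbb{H}_g$, which at $iI$ is expressed in terms of traces of products of symmetric matrices. The diagonal torus $T \subset \mathrm{Sp}(2g, \mathbb{R})$ acts on $\mathrm{Sym}(g, \mathbb{C})$ and simultaneously diagonalizes the curvature operator, yielding an orthonormal weight basis given by $E_{ii}$ for $1 \leq i \leq g$ together with $\tfrac{1}{\sqrt 2}(E_{ij} + E_{ji})$ for $i < j$. Each such weight is labelled by a pair $(i,j)$ with $i \leq j$, and the invariant defining $C_p$ becomes an explicit function of the collection of $p$ weights spanning $W$.

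Third, I carry out the combinatorial optimization over such collections of $p$ weights. An exchange argument should show that the extremal $W$ is obtained by filling weights lexicographically: first take all weights $(i, j)$ with $i \leq j \leq k$, which span a copy of $\mathrm{Sym}(k, \mathbb{C})$ of dimension $\binom{k+1}{2}$, and then adjoin $r + 1$ additional weights lying in the $(k+1)$-th row. This matches the parametrization $p - 1 = \binom{k+1}{2} + r$ appearing in the statement, and evaluating the curvature invariant on this configuration yields the claimed formulas. The case split on $g - k$ in the table then arises because, when the complementary block $\{k+1, \dots, g\}$ is small, the triangular configuration must be compared against mixed configurations obtained by redistributing a few weights across the block boundary.

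The main obstacle is this final optimality step. For $g - k \geq 5$ the triangular configuration wins uniformly in $r$, so the formula $\tfrac{2}{g-k}\tfrac{1}{g+1}$ or $\tfrac{2}{g-k-1}\tfrac{1}{g+1}$ applies directly; but for $g - k \in \{1, 2, 3, 4\}$ each of the small entries in Figure \ref{tablecpag} must be obtained by a direct case-by-case analysis comparing several candidate configurations. This finite but fiddly bookkeeping will account for the bulk of the proof.
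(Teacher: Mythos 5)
Your proposal gets the setup right --- reduce to the base point, identify $T_0\mathbb{H}_g \cong \mathrm{Sym}(g,\mathbb C)$, use Mok's formula $B_0(X,Y)=-2\norm{X\overline Y}^2$, and diagonalize the curvature operator in the basis $\{E_{ii}\}\cup\{\tfrac1{\sqrt2}(E_{ij}+E_{ji})\}$. But step three contains a gap that would derail the computation.

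The constant $C_p$ is a minimum over pairs $(v,V)$ with $v\in V$ and $\dim V = p$, not merely over $p$-planes and certainly not over $p$-planes spanned by weight vectors. After diagonalizing $v = X = \mathrm{diag}(\alpha_1,\dots,\alpha_g)$ by $U(g)$, the paper reduces the problem to minimizing $\mathcal F(\underline m,\Gamma) = 2\sum_i m_i^2 + \sum_{(i,j)\in\Gamma}(m_i+m_j)$ \emph{jointly} over a simplex point $\underline m=(\alpha_i^2)\in\Delta_g$, which parametrizes the direction $X$, and a set $\Gamma$ of $p-1$ weight labels indexing the eigenvalues of $-B_0(X,\cdot)|_{X^\perp}$. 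Your proposal never mentions the optimization over $\underline m$, nor the quadratic term $2\sum m_i^2 = -B_0(X,X)$ coming from the holomorphic sectional curvature of $X$ itself; yet the interplay between this quadratic piece and the linear eigenvalue sum is what produces the entire case table (via the paper's Lemma~\ref{lemvanishing1} and Lemmas~\ref{lemlequalsk}--\ref{propvalueDpcase2}). Concretely, for $r=0$ and $g-k\geq3$ your lexicographic weight-filling cannot reach the optimum: any $W$ with $v$ a weight vector has $-B_0(X,X)\in\{1,2\}$, hence $D_{X,p}\geq1$ and $C_p\geq\tfrac1{g+1}$, whereas the true minimum $C_p=\tfrac{2}{(g-k)(g+1)}<\tfrac1{g+1}$ is achieved by the \emph{non-weight} direction $X=\tfrac1{\sqrt{g-k}}(E_{11}+\dots+E_{g-k,g-k})$ together with the $\tfrac{k(k+1)}{2}$ weights supported on indices $>g-k$. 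So the extremal subspace is not spanned by weight vectors, and an exchange argument confined to weight configurations will not find it.

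Finally, the paper's closed-form case analysis is only established for $g\geq 8$; for $2\leq g\leq 7$ the proposition is verified by a finite computer enumeration (the Python routines in the annex). Your plan offers no treatment of the small-$g$ cases.
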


\medskip
This computation, joint with a small generalization of Theorem \ref{thmpartial} to the case of singular quotients of bounded symmetric domains, gives the following corollary, which is a slight improvement of a result of Weissauer \cite{weissauer86}.

\begin{corol} \label{corolsubv} Let $g \geq 12$ and let $V \subset \overline{A_g}^{BB}$ a subvariety which is not included in the boundary, nor in the set of elliptic singular points. If $\mathrm{codim} (V) \leq g - 12$, then $V$ is of general type.
\end{corol}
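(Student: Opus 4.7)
The plan is to apply to $\overline{X} = \overline{A_g}^{BB}$ the singular-quotient variant of Theorem \ref{thmpartial} announced just before the corollary, with $p = \dim V$, and to plug in the explicit value of $C_p$ supplied by Proposition \ref{propconstants}.

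First I would compute $C_p$. From $\mathrm{codim}(V) \le g - 12$ we get
\[
p \ \ge\ \binom{g+1}{2} - (g-12) \ =\ \binom{g}{2} + 12,
\]
hence $p - 1 \ge \binom{g}{2} + 11$. Combined with the a priori bound $p \le \dim A_g = \binom{g+1}{2}$, this forces $k = g-1$ in the notation of Proposition \ref{propconstants}, so $g - k = 1$ and $r = (p-1) - \binom{g}{2} \ge 11$. The first column of Figure \ref{tablecpag} then reads off
\[
C_p \ =\ \frac{r+2}{g+1} \ \ge\ \frac{13}{g+1},
\]
so it is enough to exhibit, on a suitable resolution of $\overline{A_g}^{BB}$, an effective $\mathbb{Q}$-line bundle $L_\alpha = L_0 \otimes \mathcal{O}(-\alpha D)$ for some rational $\alpha > \frac{g+1}{r+2}$, and to check that $V$ is not contained in $\mathbb{B}(L_\alpha) \cup D$.

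Next I would produce such an $\alpha$ via level covers. Fix a neat principal congruence subgroup $\Gamma(l) \subset \mathrm{Sp}(2g,\mathbb{Z})$ and a smooth toroidal compactification $\overline{A_g(l)}^{tor}$ with reduced boundary $D_l$, on which the Mumford--AMRT formula gives $K + D_l = (g+1)\lambda_l$, where $\lambda_l$ is the Hodge line bundle. By Tai and Mumford, $K_{\overline{A_g(l)}^{tor}}$ is big as soon as $g \ge 7$, so $(g+1)\lambda_l - D_l$ is big and in particular $\mathbb{Q}$-effective. Pulling back through $\overline{A_g(l')}^{tor} \to \overline{A_g(l)}^{tor}$ for $l \mid l'$, which ramifies at order $l'/l$ along the boundary, yields effectivity of $(g+1)\lambda_{l'} - (l'/l)\,D_{l'}$. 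Taking $l'/l$ as large as we please, we reach any rational $\alpha > (g+1)/13$. This is precisely the datum the singular version of Theorem \ref{thmpartial} is built to absorb, allowing us to transport the general-type conclusion back to $\overline{A_g}^{BB}$.

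The main obstacle will be the base-locus condition $V \not\subset \mathbb{B}(L_\alpha)$. The hypothesis that $V$ avoids the elliptic singular points of $\overline{A_g}^{BB}$ is what makes this tractable: on a neat level cover the preimage of $V$ lifts to an honest subvariety of a smooth toroidal model over which the analytic arguments behind Theorem \ref{thmpartial} apply directly, and general type descends through the finite \'etale quotient. To ensure that the sections of $L_\alpha$ produced above do not vanish identically on this lift, one combines ampleness of $\lambda$ on $\overline{A_g}^{BB}$ with a diagonal argument enlarging both $l'$ and the tensor power of $L_\alpha$, so that its stable base locus on $\overline{A_g(l')}^{tor}$ is forced into the union of the boundary and a nowhere-dense algebraic set that $V$ avoids by the codimension hypothesis. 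Feeding this back into Theorem \ref{thmpartial} gives the bigness of $K_V$, and hence the corollary.
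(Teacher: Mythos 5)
Your computation of $C_p$ is correct and matches the paper's: with $\mathrm{codim}\,V \le g-12$ one does land in the $g-k=1$ column of Table~1 with $r = g - \mathrm{codim}(V) - 1 \ge 11$, so $C_p = \frac{r+2}{g+1} \ge \frac{13}{g+1}$. But the route you take from there diverges from the paper's and contains a genuine gap in its central step.

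The paper does \emph{not} go through level covers. It applies the singular variant (Theorem~\ref{thmsinghyp}, or Corollary~\ref{corolsing}) \emph{directly} to a resolution $\widetilde{A_g} \to \overline{A_g}^{BB}$, which requires two Weissauer inputs your write-up never invokes: (i) Proposition~\ref{weissauer86}, $\alpha_{\mathrm{base}} \ge \frac{g+1}{12}$ for $\overline{A_g}$, which produces the required effective $L_\alpha$ with base locus inside $D$ \emph{on the correct model} $\widetilde{A_g}$; and (ii) Proposition~\ref{propcritag}, namely that the ramification condition $(\mathrm I_{x,p})$ holds at simple points of the exceptional divisor once $p \ge \dim A_g - g + 7$ (satisfied here since $\mathrm{codim}\,V \le g-12$). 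Point (ii) is precisely what lets the metric $\det(h_{\mathrm{Berg}}|_{T_V}^\ast)$ remain locally bounded across the exceptional divisor, so that Boucksom's theorem applies; the hypothesis that $V$ avoids the elliptic singular set is used through this mechanism, not through an \'etale descent.

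The gap in your level-cover substitute is the descent of general type. The covers $\overline{A_g(l')}^{tor} \to \overline{A_g}^{BB}$ are finite but \emph{ramified} along the boundary and along the locus of extra automorphisms; even if $V$ is not contained in those loci, its closure meets them, so the restricted cover $V' \to V$ is a branched cover, not an \'etale one. For a branched finite cover $f : V' \to V$ one has $K_{V'} = f^\ast K_V + R$ with $R \ge 0$, and bigness of $K_{V'}$ does \emph{not} imply bigness of $K_V$ (a genus-$2$ curve double-covering $\mathbb P^1$ is the basic counterexample). So ``general type descends through the finite \'etale quotient'' is not available, and ``feeding this back into Theorem~\ref{thmpartial}'' does not close the argument. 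You also produce effectivity of $L_\alpha$ only on $\overline{A_g(l')}^{tor}$, not on the resolution $\widetilde{A_g}$ where Theorem~\ref{thmsinghyp} is applied; those are different birational models and one cannot transfer the effectivity without extra work. To fix the proof, drop the level covers entirely: use Proposition~\ref{weissauer86} to get $\alpha > \frac{1}{C_p}$ with $\mathbb B(L_\alpha) \subset D$ directly on $\widetilde{A_g}$, verify $(\mathrm I_{x,p})$ via Proposition~\ref{propcritag}, and then invoke Theorem~\ref{thmsinghyp}.
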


Recall that when $l \geq 3$, the lattice $\Gamma(l)$ is \emph{neat} (see \cite{chai85}), so we can use \cite{AMRT} to obtain a smooth toroidal compactification of the moduli space $A_g(l)$. Adding more level structure, we can formulate statements valid for all subvarieties of $\overline{A_g(l)}$. Applying Theorem \ref{thmpartial}, we obtain the following theorem, which improves a result first stated in \cite{bru16a}:

\begin{thm}  \label{thmaghypfull} 
Let $l > 6 g$, and consider a toroidal compactification $\overline{A_g(l)}$, with boundary $D$. Then, $\overline{A_g(l)}$ is Kobayashi hyperbolic modulo $D$, and any subvariety $V \subset \overline{A_g(l)}$ with $V \not\subset D$, is of general type.
\end{thm}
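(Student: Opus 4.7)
The plan is to apply Theorem \ref{thmpartial} with $p = 1$ to $\overline{X} = \overline{A_g(l)}$. Since $\Gamma(l)$ is neat for $l \geq 3$, the toroidal compactification is smooth with simple normal crossings boundary $D$, and by the discussion of Section \ref{sectioncomp} one has $L_0 = K_{\overline{A_g(l)}} + D = (g+1)\lambda$, where $\lambda$ is the extended Hodge determinant line bundle. Proposition \ref{propconstants} applied with $p = 1$, $k = r = 0$ gives $C_1 = \gamma = \frac{2}{g(g+1)}$ for $g \geq 2$ (and $C_1 = 1$ if $g = 1$, a strictly weaker constraint).

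The problem therefore reduces to exhibiting a rational $\alpha > \frac{g(g+1)}{2}$ such that $L_\alpha = (g+1)\lambda - \alpha D$ is effective on $\overline{A_g(l)}$ with stable base locus $\mathbb{B}(L_\alpha) \subset D$. Using the finite map $\pi : \overline{A_g(l)} \to \overline{A_g(1)}$ (between suitable smooth birational models), which is \'etale on the open part and ramifies exactly to order $l$ along every boundary divisor, each Siegel modular form of weight $k$ for $\mathrm{Sp}(2g, \mathbb Z)$ vanishing at infinity to order $b$ pulls back to a section of $k\lambda$ on $\overline{A_g(l)}$ vanishing on $D$ to order $bl$. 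Recasting the desired effectivity as that of $\lambda - \frac{\alpha}{g+1} D$, it suffices to produce a Siegel modular form whose slope (vanishing order at each boundary stratum divided by weight) exceeds $\frac{g}{2l}$. The assumption $l > 6g$ is precisely calibrated so that this threshold is strictly below $\frac{1}{12}$: already for $g = 1$ the classical discriminant $\Delta$ (weight $12$, vanishing to order $1$) realises slope $\frac{1}{12}$, and for general $g$ the analogous construction relies on Igusa-type cusp forms built from products of even theta constants, whose slopes at every boundary stratum are computable and uniformly beat $\frac{1}{12}$.

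The main obstacle is the simultaneous control of this slope on every boundary stratum of the toroidal compactification (their combinatorics reflecting the $\mathrm{Sp}(2g, \mathbb Z)$-orbits of rational boundary components of $\mathbb H_g$, whose number grows with $g$). One must also verify the base-locus condition $\mathbb{B}(L_\alpha) \subset D$: this follows because $\lambda$ is the pullback of an ample line bundle on $\overline{A_g(l)}^{BB}$, ensuring that sections of suitable multiples of $L_\alpha$ separate points of the open part $A_g(l)$. Once these two ingredients are in place, Theorem \ref{thmpartial}(1) yields that every subvariety $V \not\subset D$ is of general type, and Theorem \ref{thmpartial}(2) applied with $p = 1$ gives infinitesimal $1$-measure hyperbolicity modulo $D$, which is exactly Kobayashi hyperbolicity modulo $D$.
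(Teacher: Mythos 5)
Your route — apply Theorem \ref{thmpartial} with $p = 1$ to $\overline{A_g(l)}$, use $C_1 = \gamma = \frac{2}{g(g+1)}$, identify $L_0 = (g+1)\lambda$, and pull back an effective $\mathbb{Q}$-divisor from $\overline{A_g}$ through a cover that ramifies to order $\geq l$ along the boundary — is indeed the paper's approach (formalized there as Theorem \ref{mainthm}, part 2). The arithmetic is right: $l > 6g$ is exactly what makes $\frac{(g+1)l}{12} > \frac{g(g+1)}{2}$.

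However, there is a real gap at the crucial point. You need not merely that $L_\alpha$ is effective for some $\alpha > \frac{1}{C_1}$, but that its stable base locus satisfies $\mathbb{B}(L_\alpha) \subset D$ — otherwise Theorem \ref{thmpartial} only excludes subvarieties (and controls the Kobayashi-Royden metric) away from $\mathbb{B}(L_\alpha) \cup D$, not away from $D$ alone. Your justification — that $\lambda$ pulls back from an ample bundle on $\overline{A_g(l)}^{BB}$, so sections of multiples of $L_\alpha$ ``separate points of the open part'' — does not follow: sections of $m L_\alpha$ are sections of $m(g+1)\lambda$ constrained to vanish to order $m\alpha$ along $D$, and imposing that vanishing can in principle create new base locus inside the open part. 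A convex-combination argument using semiampleness of $L_0 = \pi^\ast K_{\overline{X}^{BB}}$ only pushes the base locus into the zero set of the chosen section of $L_{\alpha}$, not into $D$. What is actually needed is Weissauer's theorem (Proposition \ref{weissauer86} in the paper), which gives the precise bound $\alpha_{\mathrm{base}} \geq \frac{g+1}{12}$ for $\overline{A_g}$, i.e.\ $\mathbb{B}\bigl((g+1)\lambda_1 - \alpha D_1\bigr) \subset D_1$ for all $\alpha < \frac{g+1}{12}$; after pulling back through the order-$l$ ramified cover, this is exactly what makes the $l > 6g$ hypothesis sufficient. Likewise, your assertion that Igusa-type theta products have slope uniformly beating $\frac{1}{12}$ on every boundary component is left unproven; citing Weissauer both produces the required sections and controls the base locus in one stroke.
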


The claim concerning the Kobayashi metric was first proved in \cite{rou15}. If we allow hyperbolicity modulo a subset other than $D$, we can take a uniform lower bound on $n$, as shown by the next result.

\begin{thm} \label{thmaghypmod} 
Let $l \geq 54$. Then there is a proper algebraic subset $Z \subsetneq \overline{A_g(l)}$, such that $\overline{A_g(l)}$ is Kobayashi hyperbolic modulo $Z\cup D$, and such that any subvariety $V \subset \overline{A_g(l)}$, with $V \not\subset D \cup Z$, is of general type.
\end{thm}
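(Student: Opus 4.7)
The plan is to apply Theorem \ref{thmpartial} at $p = 1$ to a neat toroidal compactification $\overline{A_g(l)}$ and take $Z := \mathbb{B}(L_\alpha)$ for some $\alpha > 1/C_1$. By Proposition \ref{propconstants} one has $C_1 = \frac{2}{g(g+1)}$, so the required range is $\alpha > \frac{g(g+1)}{2}$; what must be established is the existence of such an $\alpha$ for which $L_\alpha = L_0 - \alpha D$ is effective on $\overline{A_g(l)}$ as soon as $l \geq 54$, uniformly in $g$. Granted this, $Z$ is automatically a proper algebraic subset of $\overline{A_g(l)}$, and the two conclusions of Theorem \ref{thmpartial} at $p = 1$ are precisely Kobayashi hyperbolicity modulo $Z \cup D$ and general type for every subvariety $V \not\subset Z \cup D$.

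For $g \leq 9$ the bound $l \geq 54$ forces $l > 6g$ and the stronger Theorem \ref{thmaghypfull} applies directly with $Z = \emptyset$, so only $g \geq 10$ is genuinely at stake. To produce the effective $L_\alpha$ in this range, I would use the tower $\overline{A_g(l)} \to \overline{A_g(l_0)}$: since this cover has ramification index $l/l_0$ along each boundary divisor, an effective section of $L_{\alpha_0}$ on $\overline{A_g(l_0)}$ pulls back to an effective section of $L_{\alpha_0 l/l_0}$ on $\overline{A_g(l)}$. Choosing $l_0$ dividing $54$ and an appropriate $\alpha_0$, the problem reduces to producing a weight-$(g+1)$ cusp form at level $l_0$ with prescribed vanishing order along $D$; using the identification (available because $\Gamma(l)$ is neat) of $L_0$ with a fixed multiple of the Hodge bundle and Hirzebruch--Mumford proportionality, this reduces further to the positivity of the top self-intersection $L_{\alpha_0}^{n}$, where $n = g(g+1)/2$.

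The hard part is precisely this volume estimate: verifying that for every $g \geq 10$ one can pick an admissible $\alpha$ with $\alpha > g(g+1)/2$ and $l = 54$ such that $L_\alpha^n > 0$. This is a purely intersection-theoretic computation involving the Hodge class and the toroidal boundary divisors, and it is where the numerical threshold $54$ gets pinned down. Once the estimate is in place, a single application of Theorem \ref{thmpartial} at $p = 1$ yields the full statement, both the transcendental (Kobayashi) and the algebraic (general type) part, modulo the common exceptional set $Z \cup D$.
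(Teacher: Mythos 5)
Your overall architecture is right: reduce to Theorem \ref{thmpartial} (or Theorem \ref{mainthm}) at $p=1$, observe that the cover $\overline{A_g(l)} \to \overline{A_g}$ ramifies at order $\geq l$ along the boundary, and so convert an effectivity statement for $K_{\overline{A_g}} + (1-\alpha_0)D$ on the base into one on $\overline{A_g(l)}$. This is exactly the logic of Theorem \ref{mainthm}, which the paper invokes. But your proposal never supplies the key numerical input, and that is where the statement actually lives.

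The gap is in the sentence ``The hard part is precisely this volume estimate.'' You propose to re-derive the needed bound on $\alpha_{\mathrm{eff}}(\overline{A_g})$ from scratch via Hirzebruch--Mumford proportionality and top self-intersection numbers of $L_\alpha$, but you do not carry out the computation and do not explain why the resulting threshold would be uniform in $g$. The paper instead cites Grushevsky's explicit slope bound (Proposition \ref{grushevsky}): $\alpha_{\mathrm{eff}} \geq \frac{(g+1)(2\,g!\,\zeta(2g))^{1/g}}{(2\pi)^2}$ for $\overline{A_g}$. Combined with $C_1 = \gamma = \frac{2}{g(g+1)}$, one gets
$$
\frac{1}{\alpha_{\mathrm{eff}}\,\gamma} \leq \frac{(2\pi)^2}{(2\,g!\,\zeta(2g))^{1/g}}\cdot\frac{g}{2} \leq \frac{e\,(2\pi)^2}{2} < 54,
$$
where the second inequality is Stirling's lower bound $g! \geq \sqrt{2\pi}\,g^{g+1/2}e^{-g}$ and $\zeta(2g) > 1$. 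It is this Stirling argument, not a direct intersection-theoretic check, that makes $54$ a uniform bound over all $g$; without it the claim remains unproved. (As a minor point: your case split ``$g \leq 9$ is covered by Theorem \ref{thmaghypfull}'' is off by one, since $l = 54 > 6g$ only when $g \leq 8$; this does not matter once the Grushevsky--Stirling estimate is in hand, since that handles every $g$ at once.) So the structure is right, but you should replace the hand-waved volume computation by the citation of Proposition \ref{grushevsky} and the Stirling estimate, as the paper does.
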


\subsection{Application to the moduli space of curves with $l$-level structure}

We now fix some genus $g \geq 2$, and some integer $l \in \mathbb N$. As explained by Deligne and Mumford \cite{delignemumford}, we can define a coarse moduli space $M_{g,l}$ curves of genus $g$ with an $l$-level structure on their Jacobian variety, which comes with a compactification $\overline{M_{g,l}}$. By the work of Mumford and Harris \cite{MH82}, and Farkas \cite{Farkas09}, we know that $\overline{M_{g,l}}$ is of general type if $g = 22$ or $g \geq 24$. In the contrary, when $g \leq 16$, the manifolds $\overline{M_g}$ are not of general type (see \cite{Farkas09a} for a good survey of all these results). Nevertheless, not much seems to have been explicitly stated about the Kodaira dimension of the manifolds $\overline{M_{g,l}}$.
 
In this situation, we can use the previous results to obtain some information about the level structure needed for $\overline{M_{g,l}}$ to be of general type, for the small values of $g$. Indeed, the period map induces a generically immersive morphism $M_{g,l} \longrightarrow A_{g,l}$, and in this situation, we can use Theorem 1 to show that the  compactification $\overline{M_{g,l}}$ is of general type, for many pairs $(g,l)$.

\begin{corol} \label{corolmgn} Let $g \geq 2$, and let $k$ be the largest integer such that $3g - 4 \geq \frac{k(k+1)}{2}$. Let $l \in \mathbb N$ satisfying the following conditions :
\begin{enumerate}
\item if $g \geq 12$, let $l \geq 6(g-k - 1)$ ; 
\item if $g \leq 11$, let $l \geq l_0$, where $l_0$ is given by the following table.

\begin{figure}[!h]
\centering
\setlength{\tabcolsep}{5pt}
\begin{tabu}{|[1pt]c|[1pt]c|c|c|c|c|c|c|c|c|c|[1pt]}
\tabucline[1pt]{1-11}
$g$ & 2 & 3 & 4 & 5 & 6 &7 & 8 & 9 & 10 & 11   \\
\tabucline[1pt]{1-11}
$l_0$ &37 &49 &49  &37 & 25 & 22 &13 &13 &9 &8  \\
\tabucline[1pt]{1-11}
\end{tabu}
\caption{Lower bounds on the $l$-level structure needed to have $\overline{M_{g,l}}$ of general type}
\centering
\end{figure}
\end{enumerate}
Then $\overline{M_{g,l}}$ is of general type.
\end{corol}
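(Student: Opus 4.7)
The plan is to apply Theorem \ref{thmpartial} to the Torelli image in $\overline{A_g(l)}$ and then transfer the conclusion to $\overline{M_{g,l}}$ via generic finiteness.

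First, I would use the period map $\tau : M_{g,l} \to A_g(l)$ sending a smooth genus-$g$ curve with $l$-level structure to its principally polarized Jacobian with induced level structure. By the classical Torelli theorem $\tau$ is generically injective, and its differential is injective at every non-hyperelliptic point (Andreotti's refinement), so $\tau$ is generically immersive with image $V \subset A_g(l)$ of dimension $3g-3$. Choosing a compatible toroidal compactification, $\tau$ extends to $\overline\tau : \overline{M_{g,l}} \to \overline{A_g(l)}$, which is generically finite onto its image $\overline V$, and $\overline V \not\subset D$ since the Jacobian of a general smooth curve lies in the interior.

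Next, I would apply Theorem \ref{thmpartial}, using the singular-quotient refinement invoked for Corollary \ref{corolsubv}, with $p = 3g-3$. This requires two ingredients. The first is the constant $C_{3g-3}$ for $\Omega = \mathbb H_g$, which is read off Proposition \ref{propconstants} by letting $k$ be the largest integer with $k(k+1)/2 \leq 3g-4$, setting $r = 3g-4-k(k+1)/2$, and consulting the appropriate cell of Figure \ref{tablecpag}. The second is the effectivity threshold for $L_\alpha = L_0 - \alpha D$ on $\overline{A_g(l)}$: since $\Gamma(l)$ is neat for $l \geq 3$ and the cover $\overline{A_g(l)} \to \overline{A_g}$ ramifies at order $l$ along each boundary component, a standard computation (compatible with those used for Theorems \ref{thmaghypfull} and \ref{thmaghypmod}) bounds the maximal effective $\alpha$ by an explicit linear function of $l$. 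The condition $\alpha > 1/C_{3g-3}$ then translates into the stated lower bound on $l$: for $g \geq 12$, one falls in the regime $r = 0$ or $r \geq 4$ with $g-k$ large, where $C_p = \tfrac{2}{(g-k)(g+1)}$ (or $\tfrac{2}{(g-k-1)(g+1)}$), producing the bound $l \geq 6(g-k-1)$; for $g \leq 11$ one lies instead in the exceptional cells of the table, giving the sharper tabulated values of $l_0$.

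It remains to verify that $\overline V \not\subset \mathbb B(L_\alpha) \cup D$. We already know $\overline V \not\subset D$, and since $L_\alpha$ is effective, $\mathbb B(L_\alpha)$ is a proper algebraic subset; in particular, a general point of $\overline V$ lies outside it, because the Torelli image meets the region where $(g+1)\lambda_l$ is positive and unobstructed by boundary corrections. Theorem \ref{thmpartial} then gives that $\overline V$ is of general type, and since $\overline\tau : \overline{M_{g,l}} \to \overline V$ is generically finite, the bigness of the canonical class transfers: $\overline{M_{g,l}}$ is of general type as well. The main obstacle I anticipate is the careful bookkeeping in step two, namely pinning down the precise numerical correspondence between the boundary ramification order $l$ and the effectivity window for $\alpha$, and verifying that its interaction with the piecewise definition of $C_p$ in Proposition \ref{propconstants} produces exactly the cutoffs in the corollary (both the uniform bound $l \geq 6(g-k-1)$ for $g \geq 12$ and the scattered small-genus values $l_0$).
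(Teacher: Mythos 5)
Your overall strategy (period map, generic immersion by Torelli, invoke the positivity criterion on the image in $\overline{A_g(l)}$, transfer by generic finiteness) does match the paper. However, the execution has two concrete gaps.

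First, you invoke "the singular-quotient refinement" and an $\alpha_{\mathrm{eff}}$-type effectivity bound, and then try to dispose of the stable base locus with the remark that $\mathbb B(L_\alpha)$ is a proper algebraic subset, so "a general point of $\overline V$ lies outside it." This does not follow: a proper algebraic subset can perfectly well contain the whole image of the Torelli map, and nothing in your argument rules that out. The paper avoids this issue entirely by working with $\alpha_{\mathrm{base}}$ rather than $\alpha_{\mathrm{eff}}$: Weissauer's bound (Proposition \ref{weissauer86}) gives $\alpha_{\mathrm{base}} \geq \frac{g+1}{12}$, which by definition means the stable base locus is \emph{already contained in $D$} for $\alpha < \alpha_{\mathrm{base}}$; then Theorem \ref{mainthm}, second case, applies with $Z = D'$, and the only thing left to check is $\overline V \not\subset D'$. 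Also, since $\Gamma(l)$ is neat for $l \geq 3$, the variety $A_g(l)$ is smooth, so the singular-quotient machinery of Section 4 is not needed at all here; the paper simply applies Theorem \ref{mainthm} directly.

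Second, your treatment of the table is off in the range $g \geq 12$. You write "one falls in the regime $r = 0$ or $r \geq 4$" and that this produces $C_p = \frac{2}{(g-k)(g+1)}$ or $\frac{2}{(g-k-1)(g+1)}$, "producing the bound $l \geq 6(g-k-1)$." But if $r = 0$ you would get $C_p = \frac{2}{(g-k)(g+1)}$ and hence the weaker requirement $l > 6(g-k)$, which is not what the corollary states. The paper explicitly rules out $r = 0$ for $p = 3g - 3$: one shows $k = \lfloor \frac{\sqrt{24g-31}-1}{2} \rfloor$ and argues that $24g-31$ cannot be a perfect square because $24g - 31 \equiv -1 \pmod 3$, so $r = p - 1 - \frac{k(k+1)}{2} > 0$. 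This number-theoretic step is essential to land in the cell of Figure 1 with $C_p = \frac{2}{(g-k-1)(g+1)}$, which combined with $\frac{1}{\alpha_{\mathrm{base}} C_p} \leq 6(g-k-1)$ gives exactly the stated threshold. Your proposal skips this verification, and the case distinction you describe is incompatible with the claimed bound.
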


\subsection{Organization of the paper}

This article will be organized as follows. We will first introduce the definitions needed to complete the proofs of Theorem \ref{thmpartial} and \ref{thmlincomb}. We will then prove these results, and explain how we can use them to obtain the previous statements concerning the Kobayashi hyperbolicity or the type of a subvariety of \emph{any} dimension (see Theorem \ref{mainthm}).

We will then move on to the particular examples of the ball and the Siegel half space, for which it is possible to compute explicitly the constants $C_p$ mentioned earlier. Our computation will be followed by a presentation of the results concerning the $p$-measure hyperbolicity and the type of the subvarieties of bounded dimension, for a quotient of one of these two domains.

In order to deal with the case of the variety $\overline{A_g}$, it is actually necessary to consider the case of non-necessarily torsion free lattices. In the last section, we will explain one method to extend the field of application of our results, inspired from \cite{weissauer86}. In particular, we will present a generalization of Theorem \ref{thmpartial} to the singular case, in the spirit of \cite{crt17} (see Corollary \ref{corolsing}). To avoid delaying too much the proof of our main results by introducing the necessary definitions, we have preferred to deal only with torsion free lattices in the first three sections; as we will explain in the last section, we can formulate similar results in the torsion case without much change in the arguments.
\medskip

\begin{bfseries}Acknowledgements.\end{bfseries} I would like to thank Erwan Rousseau for guiding me through the existing literature, and for suggesting me many applications of the main result. Also, I am grateful to Yohan Brunebarbe for several enlightening discussions which helped me a lot to clarify the ideas underlying the present work. Finally, I thank Philippe Eyssidieux and S\'ebastien Boucksom, for their useful comments on an earlier version of this work.

\section{Hyperbolicity results in the torsion-free case}

Before starting the proof of our main result, we must introduce a few notations.

\subsection{Upper bounds on the Ricci curvature of subvarieties}  \label{sectioncurvature}

Let $\Omega \subset \mathbb C^n$ be a bounded symmetric domain. Suppose that the Bergman metric on $\Omega$ in normalized as $\mathrm{Ric}(h_{\mathrm{Berg}}) = - \omega_{\mathrm{Berg}}$, and let $\gamma \in \mathbb Q^{\ast}_+$ be such that the holomorphic section curvature is bounded from above by $-\gamma$.

We will first define properly the constants mentioned in Theorem \ref{thmpartial}. Let us recall the following classical definition.
\medskip

\begin{defi} Let $h$ be an hermitian metric on some complex manifold $V$, with curvature form $\Theta$. The \emph{bisectional curvature} of $h$ is defined to be the following function on $T_V \times T_V$:
\begin{equation} \label{defibisectional}
\forall u \in T_{V, x}, \, \forall v \in T_{V, x},  \quad B(u, v) := \frac{i \Theta (u, \overline{u}, v, \overline{v})}{\norm{u}_h^2 \norm{v}_h^2}.
\end{equation}
\end{defi}

\medskip

Now, let $p \in \llbracket 1, n \rrbracket$, and let $x \in \Omega$ and $v$ be a non-zero tangent vector at $x$. Since the bisectional curvature on $\Omega$ is non-positive, we deduce that for any vector subspace $V$ containing $v$, we have
\begin{align} \nonumber
\mathrm{Tr}^V_{h_{\mathrm{Berg}}} i \left. \Theta(h_{\Berg}) \right|_V^V \cdot(v,\overline{v}) & = i \sum_j \Theta(e_j,\overline{e_j}, v, \overline{v})  \\
  & = \sum_{j} B(e_j, v) \norm{v}^2 \leq -\gamma \norm{v}^2\label{eqrestrcurv}
\end{align}
where $\mathrm{Tr}^V_{h_{\mathrm{Berg}}}$ denotes the trace on $V$ with respect to the Bergman metric. Here, $(e_j)_j$ is any unitary frame of $V$ for the metric $h_{\mathrm{Berg}}$. We can take for example $e_1 = \frac{v}{\norm{v}}$, which gives easily the last inequality.

For each $p$, and for any $(x, v)$ as above, we can now define a constant $C_{x,v, p}$ in the following way :
\begin{equation} \label{defCxvp}
C_{x,v,p} = - \sup_{V \ni v, \mathrm{dim} V = p} \frac{ \mathrm{Tr}_{h_{\mathrm{Berg}}}^V i \left. \Theta(h_{\Berg}) \right|_V^V \cdot(v,\overline{v})}{\norm{v}^2}.
\end{equation}
By \eqref{eqrestrcurv}, for any choice of $(x,v,p)$, we have: $C_{x,v,p} \geq \gamma > 0$.

\begin{defi}
For any $p$, we let
$$
C_{p} = \min_{v \in T_{\Omega,x} \setminus \left\{ 0 \right\}} C_{x,v,p}.
$$
\end{defi}
Since $\Omega$ is a homogeneous space, we see immediately that this definition is independent of $x$. The following property is straightforward.

\begin{prop} \label{ineqprop} We have the inequalities
$$
\gamma \leq C_1 \leq C_2 \leq ... \leq C_n = 1.
$$
\end{prop}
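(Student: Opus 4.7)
The plan is to verify the three claims in order: $\gamma \le C_1$, monotonicity $C_p \le C_{p+1}$, and $C_n = 1$. All three reduce to small computations at the level of the bisectional curvature $B$, combined with the fact that since bisectional curvature on a bounded symmetric domain is non-positive, each trace appearing in the definition is a sum of non-positive terms.

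First I would fix $x \in \Omega$ and $0 \ne v \in T_{\Omega,x}$, and observe that for any $V \ni v$ of dimension $p$, choosing a unitary frame $(e_j)_{j=1}^p$ of $V$ with $e_1 = v/\|v\|$, one gets
\[
\mathrm{Tr}^V_{h_{\Berg}} i\Theta(h_{\Berg})|_V^V \cdot (v,\overline v) = \|v\|^2 \sum_{j=1}^p B(e_j, v),
\]
which is non-positive. This already shows $C_{x,v,p} \ge 0$. For the first inequality, taking $p = 1$ forces $V = \mathbb C v$, and the expression above reduces to the holomorphic sectional curvature $\|v\|^2 B(v,v)$. By definition of $\gamma$, $B(v,v) \le -\gamma$, so $C_{x,v,1} \ge \gamma$ pointwise, hence $C_1 \ge \gamma$.

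For monotonicity, the key observation is that every $(p+1)$-dimensional subspace $V' \ni v$ contains some $p$-dimensional $V \ni v$ (pick $V = \mathbb C v \oplus W$ for any hyperplane $W$ in a complement of $\mathbb C v$ inside $V'$). Completing a unitary frame of such a $V$ to a unitary frame of $V'$ by a single vector $e_{p+1}$ yields
\[
\mathrm{Tr}^{V'}_{h_{\Berg}} i\Theta|_{V'}^{V'} \cdot(v,\overline v) = \mathrm{Tr}^{V}_{h_{\Berg}} i\Theta|_{V}^{V}\cdot(v,\overline v) + \|v\|^2 B(e_{p+1}, v) \le \mathrm{Tr}^{V}_{h_{\Berg}} i\Theta|_{V}^{V}\cdot(v,\overline v),
\]
since $B \le 0$. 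Taking the supremum over all $V'$ of dimension $p+1$ is therefore bounded above by the supremum over $V$ of dimension $p$, giving $C_{x,v,p+1} \ge C_{x,v,p}$ pointwise. The pointwise inequality passes to the minimum over $v$, yielding $C_{p+1} \ge C_p$.

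Finally, for $C_n = 1$, the only $n$-dimensional subspace $V \ni v$ is $T_{\Omega,x}$ itself, and the trace becomes the full Ricci curvature at $v$. Using the normalization $\mathrm{Ric}(h_{\Berg}) = -\omega_{\Berg}$, one finds
\[
\mathrm{Tr}^{T_{\Omega,x}}_{h_{\Berg}} i\Theta(h_{\Berg}) \cdot(v,\overline v) = -\|v\|^2,
\]
so $C_{x,v,n} = 1$ for every $v$, and $C_n = 1$. There is no real obstacle here; the statement is essentially bookkeeping, and the only point that requires care is the direction of the inequality when passing from supremum to negative supremum, and verifying that enlarging $V$ can only add non-positive contributions to the trace.
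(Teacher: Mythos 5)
Your proof is correct and fills in the details of what the paper declares ``straightforward'': the paper already records $C_{x,v,p}\ge\gamma$ from the sign of the bisectional curvature (equation~\eqref{eqrestrcurv}), and the two remaining claims --- monotonicity by adding a non-positive term $\|v\|^2 B(e_{p+1},v)$ when enlarging $V$ by one unitary vector, and $C_n=1$ by identifying the full trace with the Ricci form and using the normalization $\mathrm{Ric}(h_{\Berg})=-\omega_{\Berg}$ --- are exactly the intended arguments. No discrepancy with the paper's approach.
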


The bisectional curvature decreases on subvarieties, so we can find an upper bound on the Ricci curvature on the subvarieties in terms of the constants $C_p$. 

\begin{prop} \label{propricci}
Let $(Y,0) \subset (\Omega,0)$ be a germ of submanifold of dimension $p$. Then, the restriction of the Bergman metric to $Y$ has its Ricci curvature bounded as
\begin{equation} \label{boundriccicurvature}
\mathrm{Ric} \left(\left. h_{\mathrm{Berg}} \right|_Y \right) \leq - C_p \, j^\ast \omega_{\Berg}.   
\end{equation}
where $j : Y \longrightarrow \Omega$ is the embedding map.
\end{prop}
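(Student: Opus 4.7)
The plan is to reduce the inequality to the very definition of $C_p$ by invoking the Gauss equation for K\"ahler submanifolds. First I would set $h_Y := j^\ast h_{\mathrm{Berg}}$, which is a K\"ahler metric on the germ $Y$, and write down the standard Gauss formula for the Chern curvature of a K\"ahler submanifold: for every $u, v \in T^{1,0}_y Y$,
$$
i \Theta(h_Y)(u, \bar u, v, \bar v) \;=\; i \Theta(h_{\mathrm{Berg}})(u, \bar u, v, \bar v) \;-\; \|\mathrm{II}(u,v)\|^2,
$$
where $\mathrm{II}$ denotes the second fundamental form of $j$ with values in the normal bundle. Scalar by scalar, the holomorphic bisectional curvature is therefore non-increasing on $Y$, so the $(1,1)$-form with values in $\mathrm{End}(T_Y)$ on the left-hand side is bounded above (in the Griffiths sense of scalar evaluations against pairs $(u,v)$) by the restriction of the ambient curvature form to $T_Y$.

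Next I would pass to Ricci curvatures by taking the $h_Y$-trace of $i\Theta(h_Y)$ over the $p$-dimensional bundle $T_Y$. Since $h_Y$ coincides with $h_{\mathrm{Berg}}|_{T_Y}$, this trace is the $h_{\mathrm{Berg}}$-trace, and can be evaluated in any $h_{\mathrm{Berg}}$-unitary frame of $T_Y$; the previous curvature comparison therefore yields, for every $v \in T^{1,0}_y Y$,
$$
\mathrm{Ric}(h_Y)(v, \bar v) \;\leq\; \mathrm{Tr}^{T_Y}_{h_{\mathrm{Berg}}} i\Theta(h_{\mathrm{Berg}})\bigl|_{T_Y}^{T_Y} \cdot (v, \bar v).
$$

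Finally I would apply the definition \eqref{defCxvp} of $C_{y,v,p}$ to the particular admissible $p$-dimensional subspace $V = T^{1,0}_y Y$ (which contains $v$). This gives
$$
\mathrm{Tr}^{T_Y}_{h_{\mathrm{Berg}}} i\Theta(h_{\mathrm{Berg}})\bigl|_{T_Y}^{T_Y} \cdot (v, \bar v) \;\leq\; -C_{y,v,p}\,\|v\|^2 \;\leq\; -C_p\,\|v\|^2 \;=\; -C_p\,j^\ast \omega_{\mathrm{Berg}}(v, \bar v),
$$
which is exactly \eqref{boundriccicurvature}. The main (and really the only) point of vigilance is to apply the K\"ahler Gauss equation with the correct sign, so that the correction is the negative semi-definite term $-\|\mathrm{II}(u,v)\|^2$, in agreement with the bisectional curvature convention fixed in Section \ref{sectioncurvature}; no deeper technical obstacle is expected, as all other steps are purely formal manipulations of traces.
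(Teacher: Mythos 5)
Your proposal is correct and follows essentially the same approach as the paper: pass from the Ricci curvature of $j^\ast h_{\mathrm{Berg}}$ to the trace of the ambient curvature over $V=T_{Y,0}$, and then invoke the definition of $C_p$. The paper simply cites the monotonicity of the bisectional curvature under restriction to submanifolds as a known fact, whereas you spell out the Kähler Gauss equation $i\Theta(h_Y)(u,\bar u,v,\bar v)=i\Theta(h_{\mathrm{Berg}})(u,\bar u,v,\bar v)-\|\mathrm{II}(u,v)\|^2$ to justify it; the sign is handled correctly, so the two arguments coincide.
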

\begin{proof}
Let $v$ be a tangent vector to $Y$ at $0$, and let $V = T_{Y,0}$. We have, by definition of the Ricci curvature:
\begin{align*}
\mathrm{Ric}\left( \left. h_{\Berg} \right|_Y\right) \cdot (v, \overline{v}) & =  \mathrm{Tr}_{h_{\mathrm{Berg}} |_Y}^V i \left. \Theta(h_{\Berg} |_Y) \right|_V^V \cdot(v, \overline{v}) \\
								  & \leq \mathrm{Tr}_{h_{\mathrm{Berg}}}^V i \left. \Theta(h_{\Berg}) \right|_V^V \cdot(v,\overline{v}) \\  
								  & \leq - C_p \norm{v}^2,
\end{align*}
where the first inequality comes from the fact that the bisectional curvature decreases on submanifolds. The last inequality comes simply from the definition of $C_p$.
\end{proof}

\begin{rem} The inequality \eqref{boundriccicurvature} is optimal: if $Y$ osculates a vector space $V \subset T_{\Omega, 0}$ realizing the upper bound in \eqref{defCxvp}, for some $v \in V$ such that $C_{0,v,p} = C_p$, then we have
$$
\mathrm{Ric} \left(\left. h_{\mathrm{Berg}} \right|_Y \right) (v,\overline{v}) = - C_p \, \norm{v}^2_{\Berg}
$$
\end{rem}

\subsection{Smooth compactifications of $X$} \label{sectioncomp}

In this section, we give the proofs of our main hyperbolicity results, in the torsion free case. Before that, let us give a few properties of the smooth compactifications we will consider.
\medskip

Consider then a torsion free arithmetic lattice $\Gamma \subset \mathrm{Aut}(\Omega)$. By \cite{bb66, satake60}, it is possible to compactify the smooth quotient $X = \quotient{\Gamma}{\Omega}$ into a normal projective variety $\overline{X}^{BB}$. Note that in this situation, $\overline{X}^{BB}$ has an ample canonical bundle $K_{\overline{X}^{BB}}$. Consequently, if $\pi : \overline{X} \longrightarrow \overline{X}^{BB}$ is a desingularization map, the pull-back $ L_0 = \pi^\ast \mathcal O_{\overline{X}^{BB}} \left( K_{\overline{X}^{BB}} \right)$ is \emph{nef} and \emph{big}. In the following, we will choose a desingularization $\pi$ which is biholomorphic over $X$, and such that $D = \overline{X} \setminus X$ is a simple normal crossing divisor.

Moreover, when the lattice $\Gamma$ is \emph{neat}, we can use \cite{AMRT} to produce particular smooth compactifications $\overline{X} = X \sqcup D$, on which $D$ is an SNC divisor, with a natural projection map $\pi : \overline{X} \longrightarrow \overline{X}^{BB}$. In this case, we have a natural identification $L_0 \cong \mathcal O_{\overline{X}} \left(K_{\overline{X}} + D \right)$, by \cite{mum77}.

We now state a result which is a slight variant of a lemma already stated in \cite{crt17}. It permits to control the growth of the norm of sections of powers of $\pi^\ast K_{\overline{X}^{BB}}$ near the boundary, which will be of particular importance in the sequel. Recall that the Bergman metric induces a singular metric on $T_{\overline{X}}$, which we will also denote by $h_{\mathrm{Berg}}$, when no risk of confusion results. The restrictions $\pi^\ast K_{\overline{X}^{BB}}|_{X}$ and $K_{\overline{X}}|_{X}$ are canonically identified, so it makes sense to talk about the norm of sections of $ \mathcal O_{\overline{X}} \left( m \, \pi^\ast K_{\overline{X}^{BB}} \right)$.

\begin{lem}[see \cite{crt17}] \label{loggrowth} Let $m \in \mathbb N$, and let $s \in H^0 \left(\overline{X}, L_0^{\otimes m}  \right)$. Then, for any neighborhood of a point of the boundary, there exists some constants $C > 0$ and $\alpha > 0$ such that,
\begin{equation} \label{upperboundlog}
\norm{s}^2_{(\det h_{\Berg}^\ast)^{m}} \leq C \, \abs{ \, \log \abs{w}^2 \, }^\alpha,
\end{equation}
where $w$ is a local equation for $D$.
\end{lem}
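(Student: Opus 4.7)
The plan is to establish the desired upper bound locally near a point of $D$, using the classical logarithmic growth estimates for the Bergman metric in the neighborhood of the cusps of $\overline{X}^{BB}$. I would work near a boundary point $x_0 \in D$, fixing local coordinates $(z_1, \ldots, z_n)$ in a small polydisk $U$ centered at $x_0$ such that $D \cap U$ is cut out by $w = z_1^{a_1}\cdots z_k^{a_k} = 0$, and then fix once and for all a smooth hermitian metric $h_{\mathrm{ref}}$ on $L_0$ over $\overline{X}$. Since $U$ is relatively compact and $s$ is a global holomorphic section, the quantity $\norm{s}^2_{h_{\mathrm{ref}}^{\otimes m}}$ is bounded on $U$, so the problem reduces to a pointwise comparison of the two metrics $h_{\mathrm{ref}}$ and $\det h_{\Berg}^\ast$ on $L_0$, where we use the natural identification $L_0|_X = K_X$.

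More precisely, the entire lemma follows once one shows a bound of the form
$$
\frac{h_{\mathrm{ref}}}{\det h_{\Berg}^\ast} \leq C' \abs{\log \abs{w}^2}^{\alpha'}
\quad \text{on } U \setminus D,
$$
for some constants $C', \alpha' > 0$ independent of $s$; taking $m$-th powers and multiplying by the bound $\norm{s}^2_{h_{\mathrm{ref}}^m} \leq M$ then gives the claim with $C = M (C')^m$ and $\alpha = m \alpha'$. This comparison is precisely the content of Mumford's theorem on good singular metrics for automorphic vector bundles \cite{mum77}: under the identification $L_0 \cong \mathcal O_{\overline{X}}(K_{\overline{X}} + D)$ coming from a toroidal model, the metric induced by the Bergman metric on $K_{\overline{X}} + D$ is \emph{good} in the sense of Mumford, which means that its local weight differs from a smooth one by at most a logarithmic term.

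In practice, I would verify this by an explicit asymptotic expansion of the Bergman volume form in coordinates adapted to a rational boundary component of $\overline{X}^{BB}$. After passing to a toroidal chart, the Bergman volume form takes the shape of a product of Poincar\'e-type densities $\frac{dw\, d\bar{w}}{\abs{w}^2 (\log \abs{w}^2)^2}$ in the directions transverse to the boundary, multiplied by a smooth non-vanishing factor coming from the compact part of the boundary component. Expressing this in the coordinates $(z_1, \ldots, z_n)$ and taking the inverse metric on $L_0$, one sees that the ratio $h_{\mathrm{ref}}/\det h_{\Berg}^\ast$ is dominated by a polynomial in the logarithms of the boundary equations, which is in turn dominated by $\abs{\log\abs{w}^2}^{\alpha'}$ for a suitable $\alpha'$.

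The main obstacle is making these asymptotics precise and uniform across all the rational boundary components of a general bounded symmetric domain, since their explicit shape depends on the component and on the form of the corresponding parabolic subgroup. Since only an upper bound with some (possibly far from optimal) exponent $\alpha$ is needed here, I would either invoke Mumford's goodness theorem directly, or reproduce the local argument using the description of cusps via Siegel domains of the third kind, exactly as is carried out in \cite{crt17}. Finally, I would transfer the estimates from the toroidal model to the given desingularization $\overline{X}$ by functoriality under $\pi$: since both $h_{\mathrm{ref}}$ and $\det h_{\Berg}^\ast$ are pulled back from $X$ to $\overline{X}$, and since the quantity to control is the ratio of two metrics on $L_0$, the bound is preserved under birational modifications that are isomorphisms over $X$.
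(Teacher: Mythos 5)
Your overall strategy --- fix a smooth reference metric $h_{\mathrm{ref}}$ on $L_0$ over the compact $\overline{X}$, observe that $\norm{s}^2_{h_{\mathrm{ref}}^{\otimes m}}$ is uniformly bounded, and thereby reduce the entire estimate to a section-independent comparison $h_{\mathrm{ref}}/\det h_{\Berg}^\ast \leq C' \abs{\log\abs{w}^2}^{\alpha'}$ near the boundary --- is a clean reformulation that the paper does not use; the paper instead pushes the section $s$ itself between birational models. Both routes ultimately rest on Mumford's estimates \cite{mum77} for the Bergman metric near a toroidal boundary and on a commensurability argument for logarithms of boundary defining equations across a common resolution, so the analytic core is the same.

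There is, however, a genuine gap. You propose to invoke Mumford's goodness theorem ``directly,'' appealing to ``the identification $L_0 \cong \mathcal O_{\overline{X}}(K_{\overline{X}} + D)$ coming from a toroidal model.'' But this identification, and the smooth AMRT toroidal compactification that Mumford's result requires, exist only when $\Gamma$ is \emph{neat}. The lemma is stated for an arbitrary torsion-free arithmetic lattice, where $\overline{X}$ is merely a resolution of $\overline{X}^{BB}$ and need not carry any toroidal structure. The paper handles exactly this point: it first passes to a neat normal finite-index sublattice $\Gamma' \subset \Gamma$ (using \cite{borel1969}), applies \cite{mum77} on the smooth toroidal compactification $\overline{X'}$ of $X' = \quotient{\Gamma'}{\Omega}$, descends the estimate through the finite group $G = \quotientd{\Gamma}{\Gamma'}$ to the normal quotient $\overline{X_1} = \quotient{G}{\overline{X'}}$, and only then transfers to $\overline{X}$ by resolving the graph of the birational map $\overline{X_1} \dashrightarrow \overline{X}$ and comparing the local boundary equations. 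Your proposal omits the neat-cover reduction entirely, and the final ``by functoriality'' transfer is also too quick: what must be checked is that $\log\abs{w}$ for local equations $w$ of the boundary on the two models are commensurable on $X$, which requires the resolution-of-the-graph argument rather than a bare appeal to $\pi$.
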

\begin{proof} 
The first step of the proof is to show that we can have a bound of the form \eqref{upperboundlog} on a particular normal compactification $\overline{X_1}$ of $X$. To do this, we need to consider a compactification $\overline{X'}$ of a quotient of $\Omega$ by a sub-lattice of $\Gamma$, constructed as follows.

By \cite{borel1969}, we can find a \emph{neat} normal sublattice of finite index $\Gamma' \subset \Gamma$. We can then use \cite{AMRT} to form a smooth compactification of the quotient $X' = \quotient{\Gamma'}{\Omega}$, which will take the form of a smooth projective variety $\overline{X'} = X' \cup D'$, on which $D'$ is an SNC divisor. Let $G = \quotientd{\Gamma}{\Gamma'}$ ; this finite group acts naturally on $\overline{X'}$.

Now, remark that since $\overline{X}^{BB}$ has normal singularities, $s$ descends to a well defined element 
$$
s_0 \in H^0 \left(\overline{X}^{BB}, \mathcal O_{\overline{X}^{BB}} (m\, K_{\overline{X}^{BB}} ) \right).
$$
The section $s_0$ lifts to a $G$-invariant section $s' \in H^0 \left(\overline{X'}, \mathcal O_{\overline{X'}} (m \, (\pi')^\ast  K_{\overline{X'}^{BB}}) \right)$. Here, we have denoted by $\pi' : \overline{X'} \longrightarrow \overline{X'}^{BB}$ the natural projection map.

Since $\Gamma'$ is neat, \cite{mum77} asserts that $s'$ satisfies a bound of the form \eqref{upperboundlog}, on any small open subset of the manifold $\overline{X'}$. Thus, on any neighborhood of a point of the boundary $D' \subset \overline{X'}$, there are some constants $C' > 0$ and $\beta > 0$ such that 
$$
\norm{s'}^2_{(\det h_{\Berg}^\ast)^{m}} \leq C \left( \log \abs{w'}^2 \right)^{\beta},
$$
where $w'$ is a local equation for the boundary $D'$.

Now, let $\overline{X_1}$ be the normal variety $\quotient{G}{\overline{X'}}$, where $\pi_1 : \overline{X_1} \longrightarrow \quotient{G}{\overline{X'}^{BB}} \cong \overline{X}^{BB}$ is the induced projection map. 

Since the restriction $s'|_{X}$ is $G$-invariant, it descends to a global section $s_1$ of $ \mathcal O_{\overline{X_1}} \left( m \, (\pi_1)^{\ast} K_{\overline{X}^{BB}} \right)$ on $\overline{X_1}$. As announced above, we can now check that $\norm{s_1}$ has the required growth. 
 
If $(w_i)_i$ is a set of local equations for the boundary Weil divisor $D_1 :=  \quotient{G}{D'}$, on some open subset of $\overline{X_1}$, we have an equality of ideal sheaves $(w') = \sqrt{ \sum_i (p^\ast w_i)_i }$, where $p : \overline{X'} \longrightarrow \overline{X_1}$. Thus, we deduce that for some suitable $k \in \mathbb N$, we have locally 
\begin{equation}
\norm{s_1}^2 \leq C \left( k  \sum_i \log \abs{w_i} \right)^\beta. \label{boundnorms1}
\end{equation}
We have proved the required inequality on the birational model $\overline{X_1}$, which completes the first part of the proof.

 To conclude, we just need to relate the right hand side of \eqref{upperboundlog} and \eqref{boundnorms1} on the open part $X$, which embeds in both $\overline{X_1}$ and $\overline{X}$. For this, note that the graph of the birational transformation $\overline{X_1} \dashrightarrow \overline{X}$ can be resolved as in the following diagram.
$$
\xymatrix{
 & Z \ar[dl]^{u} \ar[dr]^{}_{v} & \\
\overline{X_1} & X \ar[l] \ar[r]  &  \overline{X},
}
$$
Let $D_Z = u^\ast (D_1)_{red} = v^\ast D$ ; all these divisors are equal because they are all identified to the reduced divisor underlying schematic pullbacks of the boundary of $\overline{X}^{BB}$ to $Z$. Thus, if $w_z$ is a local equation for this divisor above open subsets of $\overline{X_1}$ and $\overline{X}$ as before, then, on the open set $X$, the functions $\sum_i \log \abs{w_i}$,  $\log \abs{w'}$, and $\log \abs{w_r}$ are pairwise commensurable (we say that $f$ and $g$ are commensurable if $f = O(g)$ and $g = O(f)$). Finally, on $X \subset Z$, we have $u^\ast s_1 = v^\ast s$ and thus $\norm{s_1} \circ u = \norm{s} \circ v$, so these commensurability relations combined with \eqref{boundnorms1} give the required inequality \eqref{upperboundlog}. 
\end{proof}

Before starting the proof of our main result, we introduce two constants that will be useful in applications of Theorem \ref{thmpartial}. Recall that since $K_{\overline{X}^{BB}}$ is ample, the line bundle $L_0 =  \mathcal O_{\overline{X}} \left( \pi^\ast K_{\overline{X}^{BB}} \right)$ is big. This property is open, so for any $\alpha > 0$ small enough, $L_\alpha =  L_0 \otimes \mathcal O(- \alpha D)$ is effective. This allows us to define the following constant:

\begin{defi} $\alpha_{\mathrm{eff}} = \sup \{ \alpha > 0 \; ; \; L_0 \otimes \mathcal O(- \alpha D) \; \text{is effective} \;  \}$ 
\end{defi}
From what has just been said, we have $\alpha_{\mathrm{eff}} > 0$.
\medskip

Besides, according to \cite[Proposition 4.2]{HT06}, there exists $\alpha \in \mathbb Q^\ast_+$ such that the stable base locus of the $\mathbb Q$-divisor $L_0 \otimes \mathcal O(- \alpha D)$ is included in $D$. We then let

\begin{defi} $\alpha_{\mathrm{base}} = \sup \left\{ \alpha > 0 \; ; \; \mathbb B \left( \, L_0 \otimes \mathcal O(- \alpha D)\, \right) \subset D \right\}$.
\end{defi}

Actually, it is shown in \cite[Proposition 4.2]{HT06} that $\alpha_{\mathrm{base}} \geq \frac{1}{\binom{n+1}{2} + 2}$ in general. We will see later on that for some particular toroidal compactifications, we can have better lower bounds on both $\alpha_{\mathrm{base}}$ and $\alpha_{\mathrm{eff}}$ (see Propositions \ref{propbaktsi}, \ref{weissauer86} and \ref{grushevsky}).

\subsubsection{Algebraic hyperbolicity. Proof of Theorem \ref{thmlincomb}}

Let us now move on to the proof of the first point of Theorem \ref{thmpartial}. It is actually an immediate consequence of Theorem \ref{thmlincomb} ; here is the proof of this last result.

\begin{proof}[Proof of Theorem \ref{thmlincomb}] Suppose that $L_\alpha$ is effective, and let $V \subset \overline{X}$ be a subvariety of dimension $q \geq p$ such that $V \not\subset \mathbb B(L_\alpha) \cup D$. Choose a rational number $\lambda \in ]0, C_p \alpha[$, and consider a resolution of singularities $\widetilde{V} \overset{j}{\longrightarrow} V \subset  \overline{X}$ such that $F = j^\ast D$ has simple normal crossing support.

By assumption, there exist an integer $m \in \mathbb N$ and a section $s \in H^0 \left( \overline{X}, L_0^{\otimes m} \otimes \mathcal O( - m \alpha D) \right)$, such that $s|_{V}$ is non-zero. Moreover, since $\alpha > \frac{\lambda}{C_p}$, we can find a real number $\beta \in \left] \frac{\lambda}{\alpha m}, \frac{C_p}{m} \right[$.  We define a singular metric $\widetilde{h}$ on $\mathcal O\left( K_{\widetilde{V}} + (1 - \lambda) F_{red} \right)^\ast$, in the following way:
\begin{equation} \label{defimetric}
\widetilde{h} = \norm{s}^{2 \beta} \mathrm \det (j^\ast h_{\Berg}),
\end{equation} 
where the norm $\norm{\cdot}$ is induced by $h_{\Berg}$ on $L_0^{\otimes m}$. In particular, $s$ is seen as a section vanishing at order $m \alpha$ over $D$. 

To conclude the proof, we will show that the dual metric $\widetilde{h}^\ast$ satisfies the hypotheses that permit to apply the main result of \cite{bou02}.

\begin{lem} \label{lembound} The metric $\widetilde{h}$ is locally bounded everywhere on $\widetilde{V}$.
\end{lem}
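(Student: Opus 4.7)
Away from $F_{red}$ the claim is immediate: $V \not\subset D$ ensures that $\det(j^\ast h_{\Berg})$ is smooth and nondegenerate on $\widetilde V \setminus F_{red}$, and $V \not\subset \mathbb B(L_\alpha)$ ensures that $\norm{j^\ast s}$ is smooth there, so $\widetilde h$ is smooth (and hence locally bounded) on this open set. The plan is therefore to analyze $\widetilde h$ in a neighborhood of a point $p \in F_{red}$. I would choose SNC coordinates $(w_1, \ldots, w_q)$ at $p$ such that $F_{red} = \{w_1 \cdots w_k = 0\}$ and $F = j^\ast D = \sum_{i \leq k} a_i \{w_i = 0\}$ with $a_i \geq 1$, and trivialize $\mathcal O(K_{\widetilde V} + (1-\lambda) F_{red})^\ast$ via the frame $\partial_{w_1} \wedge \cdots \wedge \partial_{w_q}$ of $-K_{\widetilde V}$ together with the canonical local section of $\mathcal O(-(1-\lambda) F_{red})$. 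In this trivialization the local weight of $\widetilde h$ factors as
\begin{equation*}
\widetilde h_{\mathrm{loc}} = \norm{j^\ast s}^{2\beta} \cdot \det(j^\ast h_{\Berg})(\partial_{w_1} \wedge \cdots \wedge \partial_{w_q}) \cdot \prod_{i \leq k} |w_i|^{2(1-\lambda)}.
\end{equation*}

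I would then estimate these three factors separately near $F_{red}$. Writing $j^\ast s = \prod_{i \leq k} w_i^{m\alpha a_i} \cdot s'$ with $s'$ a local section of $j^\ast L_0^{\otimes m}$, and applying Lemma \ref{loggrowth} to $s$ on $\overline X$, gives $\norm{s'}^2 \leq C |\log|w||^{N_1}$ for some $N_1 > 0$, and hence
\begin{equation*}
\norm{j^\ast s}^{2\beta} \leq C \prod_{i \leq k} |w_i|^{2 m\alpha\beta a_i} \cdot |\log|w||^{\beta N_1}.
\end{equation*}
Separately, a direct toroidal-coordinate expansion of the Bergman metric near $D$ (equivalently, Mumford's good-metric theorem \cite{mum77}), combined with the chain rule for $j$, yields
\begin{equation*}
\det(j^\ast h_{\Berg})(\partial_{w_1} \wedge \cdots \wedge \partial_{w_q}) \leq C' \prod_{i \leq k} |w_i|^{2 b_i} \cdot |\log|w_i||^2
\end{equation*}
for some integers $b_i \geq 1$ (with $b_i = 1$ in the generic transverse unramified case). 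Multiplying these together with the twist factor, the $|w_i|$-exponent of $\widetilde h_{\mathrm{loc}}$ at each boundary component of $F_{red}$ is at least $2(m\alpha\beta a_i + b_i + (1-\lambda))$, which by $a_i, b_i \geq 1$ and the choice $\beta > \lambda/(m\alpha)$ is strictly positive. Thus a genuine positive power of $|w_i|$ dominates the polylogarithmic correction, $\widetilde h_{\mathrm{loc}} \to 0$ as $w \to F_{red}$, and local boundedness at $p$ follows.

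The main obstacle is establishing the lower bound $b_i \geq 1$ on the orders of vanishing of $\det(j^\ast h_{\Berg})$ along each component of $F_{red}$, in full generality: non-transverse intersections between $V$ and $D$, ramification of $j$ along the boundary, and exceptional divisors introduced by the resolution $\widetilde V \to V$ all have to be handled uniformly. The key ingredients are the reduction to the neat toroidal situation of Section \ref{sectioncomp} (where $h_{\Berg}$ is known to extend as a good metric on $K_{\overline X} + D$), and the observation that a Poincar\'e-type metric restricts to a Poincar\'e-type metric on any subvariety and pulls back to one under birational modifications, with the orders of vanishing only increasing under ramification.
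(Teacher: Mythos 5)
Your local factorization of $\widetilde h$ in SNC coordinates is a legitimate strategy, and two of the three factors are estimated correctly (the growth of $\|j^\ast s\|^{2\beta}$ via Lemma \ref{loggrowth} and the twist $\prod|w_i|^{2(1-\lambda)}$). But the central estimate on the Bergman factor has the wrong sign. The quantity $\det(j^\ast h_{\Berg})(\partial_{w_1}\wedge\cdots\wedge\partial_{w_q})$ does \emph{not} vanish along $F_{red}$; it blows up. Since $h_{\Berg}$ has Poincar\'e-type growth near $D$ (and the same holds for $j^\ast h_{\Berg}$, as argued below), one has
\begin{equation*}
\det(j^\ast h_{\Berg})(\partial_{w_1}\wedge\cdots\wedge\partial_{w_q}) \;\leq\; C'\,\prod_{i\leq k}\frac{1}{|w_i|^{2}\,\bigl(\log|w_i|\bigr)^{2}},
\end{equation*}
so in your notation $b_i \geq -1$, not $b_i \geq 1$. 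With your bound as written, the total exponent $2(m\alpha\beta a_i + b_i + (1-\lambda))$ is positive for trivial reasons and the condition $\beta > \lambda/(m\alpha)$ plays no role — which should have been a warning sign, since that condition is precisely what the paper needs. Once the sign is corrected, the exponent becomes $2(m\alpha\beta a_i - \lambda) \geq 2(m\alpha\beta - \lambda) > 0$, and the argument does close, now genuinely using the choice of $\beta$.

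The second issue is how you propose to justify the Bergman-growth bound, which you rightly identify as the main obstacle. Your sketch — reduce to a neat toroidal model, invoke Mumford's good metric, and track orders of vanishing through restriction, ramification of $j$, and exceptional divisors of $\widetilde V \to V$ — is fragile precisely because of the phenomena you list: the bound you need has to be uniform over all these cases. The paper avoids the bookkeeping entirely with a curvature argument: $h_{\Berg}$ has holomorphic sectional curvature $\leq -\gamma < 0$, this property only improves under restriction to a submanifold, and the psh extension lemma makes it persist (in the sense of currents) through $j$; the Ahlfors–Schwarz lemma then gives Poincar\'e growth of $j^\ast h_{\Berg}$ near $F_{red}$ directly, with no reference to how $V$ meets $D$ or to the resolution. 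In particular, your closing remark that a Poincar\'e-type metric restricts to a Poincar\'e-type metric on a subvariety is not the relevant input (and is not obviously true as stated — $V$ can meet $D$ with high tangency): what is true, and what the Ahlfors–Schwarz argument delivers, is that the restriction has \emph{at most} Poincar\'e growth, which is all that is needed.
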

\begin{proof}[Proof of Lemma \ref{lembound}]
We first prove that $j^{-1}(h_{\Berg})$ has Poincaré growth near the simple normal crossing divisor $F_{red}$, in the sense of \cite{mum77}. The following argument comes directly from \cite{brucad17}, so we only mention it briefly for completeness: it is a simple application of the Ahlfors-Schwarz lemma, combined with the fact that $h_{\mathrm{Berg}}$ has negative holomorphic curvature.
\medskip

The metric $h_{\Berg}$ has negative holomorphic sectional curvature, bounded from above by $- \gamma$. Since the holomorphic sectional curvature decreases on submanifolds, this is also true for $j^\ast h_{\Berg}$, at the points where the latter is regular. Moreover, $j^\ast h_{\Berg}$ is locally bounded on $\widetilde{V} \setminus j^{-1}(D)$. Thus, the usual extension lemma for psh functions across analytic subsets shows that, on $\widetilde{V} \setminus j^{-1}(D)$, the singular metric  $j^\ast h_{\Berg}$ has negative holomorphic sectional curvature bounded from above by $- \gamma$ ; this means that the metric induced on any disk has negative curvature uniformly bounded from above, \emph{in the sense of currents}. We can now use the Ahlfors-Schwarz lemma (see for example \cite{dem12a}) for singular metrics to conclude that $j^\ast h_{\Berg}$ has Poincaré growth near the simple normal crossing divisor $F_{red}$. 
\medskip

The previous argument implies that $\det(j^\ast h_{\Berg})$, \emph{seen as a metric on $\mathcal O_{\widetilde{V}}(K_{\widetilde{V}} + F_{red})^\ast$} is locally bounded. Thus, if $w$ is a local equation for $F_{red}$, and if $\sigma$ is a local frame for $\mathcal O_{\widetilde{V}} \left( K_{\widetilde{V}} + (1 - \lambda) F_{red} \right)^\ast$, we have:
$$
\norm{\sigma}^2_{\mathrm{det} (j^\ast h_{\Berg})} \leq C \frac{1}{\abs{w}^{2 \lambda}},
$$
for some constant $C$. Recall that $s$ vanishes at order $m \alpha$ near $D$, and that the norm $\norm{\cdot}$ on $L_0^{\otimes m}$ induced by $h_{\Berg}$ has logarithmic growth near $D$, by Lemma \ref{loggrowth}. Consequently, the smooth function $\norm{s}$ is bounded from above as
$$
\norm{s} \leq C_1 \,\abs{w}^{m \alpha} \, \abs{\log \abs{w}}^k
$$
for some $k > 0$, and we see by the definition \eqref{defimetric} that since $\beta m \alpha > \lambda$, the metric $\widetilde{h}$ is locally bounded everywhere. This ends the proof of the lemma.
\end{proof}

\begin{lem} The singular metric $\widetilde{h}^\ast$ has positive curvature $i\Theta( \widetilde h^\ast)$, in the sense of currents, and its absolutely continuous part with respect to the Lebesgue measure satisfies 
\begin{equation} \label{ineqcurvature}
i \Theta(\widetilde{h}^\ast)^{ac} \geq (C_p - \beta m) \, \omega_{\mathrm{Berg}}.
\end{equation}
\end{lem}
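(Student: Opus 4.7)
The plan is to compute the curvature current $i\Theta(\widetilde{h}^\ast)$ explicitly on the smooth locus of $\widetilde V$ via Poincaré-Lelong, use Proposition \ref{propricci} to handle the Ricci contribution, and then invoke the extension theorem for plurisubharmonic functions (relying on Lemma \ref{lembound}) to pass to positivity as a current on all of $\widetilde{V}$.

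More precisely, near a point of $\widetilde V$, choose local coordinates $(w_1, \ldots, w_r, z_1, \ldots, z_{q-r})$ such that $F_{red} = \{w_1 \cdots w_r = 0\}$. Let $\tau = \partial_{w_1} \wedge \cdots \wedge \partial_{z_{q-r}}$ be the induced local frame of $K_{\widetilde V}^\ast$, and $\tilde\tau := \tau \otimes \prod_i w_i^{1-\lambda}$ the associated (formal) local frame of $\mathcal O(K_{\widetilde V} + (1-\lambda)F_{red})^\ast$. Unpacking the definition \eqref{defimetric} in this frame,
$$\log|\tilde\tau|^2_{\widetilde h} = 2(1-\lambda)\sum_i \log|w_i| + \beta \log \norm{s}^2 + \log \det(j^\ast h_{\Berg})(\tau,\tau),$$
so that $i\Theta(\widetilde h^\ast) = i\partial\overline\partial \log|\tilde\tau|^2_{\widetilde h}$. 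Let $U \subset \widetilde V$ denote the complement of $F_{red}$, of the zero locus of $j^\ast s$, and of the non-immersive locus of $j$; each of the three summands above is smooth on $U$.

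On $U$, the first term drops out, Poincaré-Lelong gives $i\partial\overline\partial \log\norm{s}^2 = -m \, j^\ast \omega_{\Berg}$, and the last term contributes $-\mathrm{Ric}(j^\ast h_{\Berg})$. Applying Proposition \ref{propricci}, which yields $-\mathrm{Ric}(j^\ast h_{\Berg}) \geq C_p \, j^\ast \omega_{\Berg}$ wherever $j^\ast h_{\Berg}$ is a smooth Hermitian metric, we obtain on $U$:
$$i\Theta(\widetilde h^\ast) \geq (C_p - \beta m) \, j^\ast \omega_{\Berg} \geq 0,$$
the last inequality using the choice $\beta < C_p/m$. To promote this to positivity as a current on all of $\widetilde V$, observe that $\widetilde V \setminus U$ is a proper analytic subset and that $\log|\tilde\tau|^2_{\widetilde h}$ is locally bounded above thanks to Lemma \ref{lembound}. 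The standard extension theorem for plurisubharmonic functions then implies that $\log|\tilde\tau|^2_{\widetilde h}|_U$ extends uniquely to a psh function on $\widetilde V$, which agrees with the original expression after upper-semicontinuous regularization. Hence $i\Theta(\widetilde h^\ast) \geq 0$ as a current on $\widetilde V$, and since the absolutely continuous part of a current is defined up to a null set, the pointwise inequality obtained on $U$ transfers verbatim to yield \eqref{ineqcurvature}.

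The main obstacle is the compatibility of the singularities along $F_{red}$: the pulled-back Bergman metric has Poincaré-type poles there, while the twist by $\norm{s}^{2\beta}$ contributes fractional zeros of order $\beta m \alpha a_i$ (where $a_i$ is the multiplicity of $F_i$ in $F$). The condition $\beta m \alpha > \lambda$ is precisely what ensures that these zeros dominate the Poincaré poles once the factor $\prod|w_i|^{2(1-\lambda)}$ from the frame $\tilde\tau$ is taken into account, producing the local boundedness claimed in Lemma \ref{lembound}; this is exactly what unlocks the extension argument above.
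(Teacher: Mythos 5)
Your argument is correct and follows the paper's proof essentially verbatim: compute the curvature on the locus where everything is smooth, invoke Proposition \ref{propricci} to control the Ricci contribution, pass to currents via the psh extension theorem using the local boundedness from Lemma \ref{lembound}, and compare absolutely continuous parts over a full-measure set. The only imprecision worth flagging is that Proposition \ref{propricci}, applied to the germ $V$ of dimension $q = \dim V \geq p$, yields the bound $-\mathrm{Ric}(j^\ast h_{\Berg}) \geq C_q\, j^\ast\omega_{\Berg}$, not $C_p$ directly; one additionally needs the monotonicity $C_q \geq C_p$ from Proposition \ref{ineqprop} (which the paper cites explicitly at this step) to arrive at the stated constant.
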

\begin{proof}
The curvature of $\widetilde{h}$ at any point of $V_{\mathrm{reg}} \setminus D$ can be bounded as:
\begin{align} \nonumber
i \Theta(\widetilde{h}) (v, \overline{v}) & = i \beta j^\ast \Theta \left( \left( \det h^\ast_{\Berg} \right)^m \right) (v, \overline{v}) + i \Theta (\det (j^\ast h_{\Berg})) (v, \overline{v}) \\ \nonumber
	& = \left[ - \beta m \, j^\ast  \mathrm{Ric}(h_{\Berg}) + \mathrm{Ric} (j^\ast h_{\Berg}) \right] (v, \overline{v})  \\ \nonumber
	& \leq \left( \beta m - C_{q}\right) \norm{v}^2  \\ \label{ineqcurvature1}
	& \leq \left( \beta m - C_p \right) \norm{v}^2.
\end{align}
where at the third line we used Proposition \ref{propricci} and our normalization hypothesis on $h_{\Berg}$, while at the fourth line we used Proposition \ref{ineqprop}. Thus, since $\beta m < C_p$, the metric $\widetilde{h}$ has negative curvature at the points where it is smooth. This means that if locally $\widetilde{h}^\ast \overset{loc}{=} e^{- \varphi}$, the weight $\varphi$ is psh on its smooth locus.
\medskip

Thus, we see that the singular metric $\widetilde{h}$ on $ \mathcal O_{\widetilde{V}} (K_{\widetilde{V}} + (1 - \lambda) F_{red})$, is locally bounded everywhere, and has negative curvature outside the divisor $j^{-1}(D \cup V_{\mathrm{sing}})$. Thus, the local weights $\varphi$ for $\widetilde{h}^\ast$ are psh on their smooth locus, and locally bounded. By the usual extension theorem for plurisubharmonic functions across an analytic subset, they are psh on their domain of definition. This means that $\widetilde{h}^\ast$ has positive curvature in the sense of currents. Moreover, dualizing \eqref{ineqcurvature1} yields the wanted inequality, since $V_{reg} \setminus D$ is of full Lebesgue measure in $\widetilde{V}$.
\end{proof}

Now, we can use \cite{bou02}, to bound $\mathrm{vol} (K_{\widetilde{V}} + (1 - \lambda) F_{red})$ from below as
$$
\mathrm{vol}(K_{\widetilde{V}} + (1 - \lambda) F_{red}) \geq \int_{\widetilde{V}} \left( \frac{i}{2\pi} \Theta(\widetilde{h}^\ast )^{ac} \right)^{p} 
$$
By the inequality \eqref{ineqcurvature}, this implies that
$$
\mathrm{vol} \left(K_{\widetilde{V}} + (1 - \lambda) F_{red} \right)  \geq \left( \frac{C_p - \beta m}{2 \pi} \right)^q \int_{V_{\mathrm{reg}} \setminus \left( D \cup \left\{s=0 \right\} \right)} \omega_{\Berg}^q,
$$
and, making $\beta \longrightarrow \frac{\lambda}{ \alpha m}$, we obtain
$$
 \mathrm{vol}(K_{\widetilde{V}} + (1 - \lambda) F_{red}) \geq \left( \frac{C_p - \frac{\lambda}{\alpha}}{2 \pi} \right)^q \int_{V_{\mathrm{reg}} \setminus D} \omega_{\Berg}^q,
$$ 
which proves Theorem \ref{thmlincomb}.
\end{proof}
\medskip

\begin{rem} \label{remguenancia} In the case $p = 1$, we can follow a different approach using the recent work of Guenancia \cite{Guenancia18}. If we suppose that $L_\alpha$ is effective for some $\alpha > \frac{1}{C_1}$, then, with the same notations as before, we can choose a section $s \in H^0 \left( \overline{X},  L_0^{\otimes m} \otimes \mathcal O (- m \alpha D) \right)$, and we can construct the following singular metric on $T_{\widetilde{V}}$, in a fashion similar to \eqref{defimetric}:
$$
\widetilde{h} = \norm{s}^{2 \beta} j^\ast h_{\mathrm{Berg}}, 
$$
with $\norm{\cdot}$ having the same meaning, and $\beta$ being taken as above. 

Then, by an argument similar to Lemma \ref{lembound}, we see that $\widetilde{h}$ is locally bounded everywhere. Moreover, a simple computation shows that on its regular locus, $\widetilde{h}$ has negative holomorphic sectional curvature, bounded from above by a negative constant. By \cite[Proof of Theorem 4.4, Step 6]{Guenancia18}, this implies that $V$ is of general type.
\end{rem}

\subsubsection{$p$-measure hyperbolicity. End of the proof of Theorem \ref{thmpartial}}

Before proving the second claim of Theorem \ref{thmpartial}, we need to recall some classical definitions concerning measure hyperbolicity, that we can find with more details in \cite{dem12a}. On any complex manifold $X$, we can define several intrinsic infinitesimal metrics, whose properties permit to characterize the hyperbolicity properties of $X$. 

\begin{defi} Let $X$ be a complex manifold, and let $\xi = v_1 \wedge ... \wedge v_p$ be a decomposed $p$-vector based at some point $x \in X$. The \emph{Kobayashi-Eisenman pseudometric} of $v$ is the real number
$$
\mathbf{ e}^p(\xi) = \min \left\{ \lambda > 0 ; \exists f : \mathbb B_p \longrightarrow X, \; f(0) = x, \lambda f_\ast (\tau_p) = \xi \right\},
$$
where $\tau_p = e_1 \wedge ... \wedge e_p \in \bigwedge^p T_{\mathbb B^p, 0}$ is the canonical $p$-vector based at $0 \in \mathbb B^p$.
\end{defi}

When $p = 1$, this coincides with the Kobayashi metric. Following the construction of the Kobayashi pseudodistance (see \cite{kob76}), we can form various notions of $p$-measure hyperbolicity modulo a subset. We will deal with the following strong version. 

\begin{defi} \label{defiinfmeashyp} We say a complex manifold $X$ is \emph{infinitesimally $p$-measure hyperbolic} modulo a subset $Z$ if there exists a \emph{positive} continuous metric $\mathbf{\upsilon}$ on the space of all decomposed $p$-vectors on $X \setminus Z$ such that
$$
\mathbf{e}^p(\xi) \geq \mathbf{\upsilon}(\xi)
$$
for any such $p$-vector $\xi$.
\end{defi}

We can now complete the proof of the transcendental part of our main result.

\begin{proof} [End of the proof of Theorem \ref{thmpartial}]
The basic idea of the proof is to use the Ahlfors-Schwarz lemma to bound the size of holomorphic balls $\mathbb B^p \longrightarrow \overline{X}$. To do this, we construct a metric on these balls, of the same form as \eqref{defimetric}. Let us describe how to define such a metric.  
\medskip

Let $x \in \overline{X} \setminus \left( D \cup \mathbb B(L_\alpha) \right)$, and let $\xi$ be a non-zero $p$-vector at $x$. We consider a holomorphic map $f : \mathbb B^p \longrightarrow \overline{X}$ and a real number $\lambda > 0$, such that $ \lambda f_\ast( \tau_0) = \xi$. Then, since $x \not\in \mathbb B(L_\alpha)$, we can, as in the proof of Theorem \ref{thmlincomb}, find an integer $m \in \mathbb N$ and a section $s$ of $L_0^{\otimes m}$, vanishing at order $m \alpha$ along $D$, such that $s(x) \neq 0$. Now, pick a real number $\beta$ as before, and define the singular metric
\begin{equation} \label{modmetriccp}
h_0 = \norm{s}^{\frac{2 \beta}{p}} j^\ast h_{\Berg}
\end{equation}
on $\mathbb B^p$. 

Now, we show that $\det h_0$ has negative curvature in the sense of currents. By the same argument as in Lemma \ref{lembound}, we see that $\widetilde{h} = \mathrm{det} (h_0)$ is locally bounded everywhere on $\mathbb B^p \setminus \Sigma$, where $\Sigma$ is the set of multiple points of $p^{-1}(D)_{red}$. Note that $\mathrm{codim} \Sigma \geq 2$.  Moreover, $\widetilde{h}^\ast$ has positive curvature at the points where it is smooth, so that $i  \partial \overline{\partial}  \log \det h_0  = i \Theta(\widetilde{h}^\ast) \geq \epsilon j^\ast \omega_{\Berg}$, with $\epsilon = C_p - \beta m$. Now, if we pick a constant $C > \sup_{\overline{X}} \norm{s}^{\frac{2 \beta}{q}}$, the extension theorem for plurisubharmonic functions across analytic subsets implies that 
\begin{equation} \label{ineqahlforsschwartz}
i  \partial \overline{\partial} \log \det h_0  \geq \frac{\epsilon}{C} \omega_{h_0}
\end{equation}
on $\mathbb B^p \setminus \Sigma$, \emph{in the sense of currents}. Since $\mathrm{codim} \Sigma \geq 2$, the function $\log \det h_0$ actually extends as a plurisubharmonic function across $\Sigma$, and the inequality \eqref{ineqahlforsschwartz} holds on the whole $\mathbb B^p$. 

Thus, the Ahlfors-Schwarz lemma (see \cite[Proposition 4.2]{dem12a}) shows that 
$$
\det (\omega_0)(0) \leq \left( \frac{p+1}{\epsilon/C} \right)^p.
$$ 
However, by our definition of $h_0$, we have $\det(\omega_0)(0) = \norm{s(x)}^{2 \beta} \frac{\norm{\xi}^2_{\Berg}}{\lambda^{2}}$. Thus, $\lambda \geq C_0 \norm{\xi}_{\Berg}$, so $\mathbf{e}^p(\xi) \geq C_0 \norm{\xi}_{\Berg}$, with $C_0 = \left( \frac{\epsilon/C}{p+1} \right)^{\frac{p}{2}} \norm{s(x)}^\beta$. It is clear that we can take the same constant $C_0$ if $x$ varies in some small open subset not meeting $\mathbb B(L_\alpha) \cup D$. Comparing with Definition \ref{defiinfmeashyp}, we see that this ends the proof of Theorem \ref{thmpartial}.
\end{proof}
\medskip

We can now state and prove the following more precise version of Theorem \ref{thmbru}.   

\begin{thm} \label{mainthm} Assume that $\Gamma$ is neat, and that $\overline{X}$ is a toroidal compactification of $X$, as constructed in \cite{AMRT}. Let $p \in \llbracket 1, n \rrbracket$ and let $\overline{X'} \longrightarrow \overline{X}$ be a ramified cover, étale on the open part, ramifying at orders higher than some integer $l$ above the boundary. Denote by $D'$ the boundary of $\overline{X'}$. Then, for $l \gg 1$ there exists an algebraic subset $Z \subsetneq \overline{X'}$, such that $\overline{X'}$ is Kobayashi hyperbolic modulo $Z$, and such that any $V \subset \overline{X'}$, with $V \not\subset Z$, is of general type. More precisely,
\begin{enumerate}
\item if $l > \frac{1}{\alpha_{\mathrm{eff}} C_p}$, we can take $Z = Z' \cup D'$, for some $Z' \subsetneq \overline{X'}$.

\item if $l > \frac{1}{\alpha_{\mathrm{base}} C_p}$, we can take $Z =D'$.
\end{enumerate}
\end{thm}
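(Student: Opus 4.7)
The plan is to apply Theorem \ref{thmpartial} directly to $\overline{X'}$, so the main task is to identify the line bundle $L'_0$ on $\overline{X'}$ playing the role of $L_0$ and to control the positivity of $L'_\alpha := L'_0 \otimes \mathcal O(-\alpha D')$. Let $f : \overline{X'} \to \overline{X}$ denote the covering map, and write $f^*D = \sum_i e_i D'_i$ with $e_i \geq l$, where $D' = \sum_i D'_i$ is the reduced boundary. Since $\Gamma$ is neat and $\overline{X}$ is a toroidal compactification, Section \ref{sectioncomp} gives $L_0 = \mathcal O_{\overline{X}}(K_{\overline{X}}+D)$; the Hurwitz formula $K_{\overline{X'}} = f^* K_{\overline{X}} + \sum_i (e_i-1) D'_i$ then yields the basic identity
\[
L'_0 \;=\; \mathcal O_{\overline{X'}}(K_{\overline{X'}}+D') \;=\; f^*(K_{\overline{X}}+D) \;=\; f^*L_0.
\]

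The next step uses the algebraic decomposition, valid for any rational $\beta$:
\[
L'_\alpha \;=\; f^*\bigl(L_0 \otimes \mathcal O(-\beta D)\bigr) \;\otimes\; \mathcal O\!\left(\sum_i (\beta e_i - \alpha) D'_i\right),
\]
in which the correction term is an effective $\mathbb Q$-divisor supported on $D'$ whenever $\alpha \leq \beta l \leq \beta e_i$. For case (1), the hypothesis $l > \tfrac{1}{\alpha_{\mathrm{eff}} C_p}$ lets one pick rational $\beta < \alpha_{\mathrm{eff}}$ and $\alpha \in \bigl(\tfrac{1}{C_p},\beta l\bigr]$; then $L_0 \otimes \mathcal O(-\beta D)$ is effective on $\overline{X}$, its pullback is effective on $\overline{X'}$, and so is $L'_\alpha$. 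Theorem \ref{thmpartial} then gives $p$-measure hyperbolicity modulo $Z := \mathbb B(L'_\alpha) \cup D'$ and general type for subvarieties of dimension $\geq p$ not contained in $Z$; since $L'_\alpha$ is effective, $\mathbb B(L'_\alpha) \subsetneq \overline{X'}$, which lets us set $Z' := \mathbb B(L'_\alpha)$ and obtain the decomposition $Z = Z' \cup D'$. For case (2), one instead chooses $\beta < \alpha_{\mathrm{base}}$ with the same $\alpha$: for any $x \notin D'$, one has $f(x) \notin D \supset \mathbb B(L_0 \otimes \mathcal O(-\beta D))$, so there is a section $s$ of some power of $L_0 \otimes \mathcal O(-\beta D)$ with $s(f(x)) \neq 0$; its pullback, multiplied by the canonical section of the corresponding power of $\mathcal O(\sum_i(\beta e_i-\alpha)D'_i)$ (which is non-zero outside $D'$), produces a section of the corresponding power of $L'_\alpha$ non-vanishing at $x$, so $\mathbb B(L'_\alpha) \subset D'$ and Theorem \ref{thmpartial} applies with $Z = D'$.

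The Kobayashi hyperbolicity assertion corresponds to the case $p=1$ of the $p$-measure hyperbolicity conclusion of Theorem \ref{thmpartial}. The proof is essentially formal once $L'_0 = f^* L_0$ is in place; the only real bookkeeping is to satisfy simultaneously $\beta < \alpha_{\mathrm{eff}}$ (or $\alpha_{\mathrm{base}}$), $\alpha > \tfrac{1}{C_p}$ and $\alpha \leq \beta l$, which is precisely what the hypothesis on $l$ is designed to allow.
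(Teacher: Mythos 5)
Your proposal is correct and follows essentially the same route as the paper: you identify $L'_0 = f^*L_0$ via the Hurwitz/log-canonical formula and then compare $L'_\alpha$ with the pullback of $L_0 \otimes \mathcal O(-\beta D)$ up to an effective correction supported on $D'$, exactly the divisor inequality the paper writes. The one genuine addition is that you spell out the base-locus argument for case (2), which the paper dispatches with ``the second one being similar.''
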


\begin{proof} We will only prove the first claim, the second one being similar. Remark that since $\Gamma$ is neat, we have $L_0 = \mathcal O_{\overline{X}} \left( K_{\overline{X}} + D \right)$ by \cite{mum77}. 

 Under the hypothesis $l > \frac{1}{\alpha_{\mathrm{eff} C_p}}$, we can pick a rational number such that $\beta > \frac{1}{C_p}$ and $\frac{\beta}{l} < \alpha_{\mathrm{eff}}$. Then
$$
K_{\overline{X'}} + (1 - \beta) D' \geq \pi^\ast \left( K_{\overline{X}} + \left( 1 - \frac{\beta}{l} \right) D \right),
$$
where the symbol "$\geq$" means that the difference between the two $\mathbb Q$-divisors is effective. Since $\frac{\beta}{l} < \alpha_{\mathrm{eff}}$, the divisor on the right hand side is effective. Thus, the divisor on the left is also effective, and since $\beta > \frac{1}{C_p}$ and $C_p \leq C_q$ for any $q \geq p$, the result follows immediately from Theorem \ref{thmpartial}.
\end{proof}

Now, Theorem \ref{thmbru} is a direct consequence of Theorem \ref{mainthm}, since $\alpha_{\mathrm{base}} \geq \frac{1}{\binom{n+1}{2} + 2}$ by \cite{HT06}. 
\medskip

The Siegel moduli space $\overline{ A_g}$ is a particular toroidal compactification of a quotient of the Siegel half-space $\mathbb H_g$, and in this setting, we also have effective lower bounds on both $\alpha_{\mathrm{eff}}$ and $\alpha_{\mathrm{base}}$, as the following two propositions show. 

\begin{prop} [\cite{weissauer86}] \label{weissauer86}
If $\overline{X} = \overline{A_g}$, we have $\alpha_{\mathrm{base}} \geq \frac{g + 1}{12}$.
\end{prop}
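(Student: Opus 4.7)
The plan is to reduce the bound on $\alpha_{\mathrm{base}}$ to the positivity of the $\mathbb Q$-divisor $12\lambda - D$ on a toroidal compactification of $A_g$, where $\lambda = \det \mathbb E$ denotes the Hodge line bundle, and then to invoke the existence of suitably vanishing Siegel cusp forms.

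I would first reduce to the neat case by passing to a finite cover $\overline{A_g(l)} \to \overline{A_g}$ with $l \geq 3$, which is \'etale over the interior. The condition that the stable base locus be contained in the boundary behaves well under such finite covers, so it is enough to prove $\alpha_{\mathrm{base}} \geq \frac{g+1}{12}$ on a toroidal compactification $\overline{X}$ associated to a neat arithmetic subgroup. In this setting, Mumford's formula \cite{mum77}, combined with the canonical identification $T^\ast_{\mathbb H_g} \cong \mathrm{Sym}^2 \mathbb E$ on $A_g$, yields
$$L_0 = \pi^\ast K_{\overline{X}^{BB}} = K_{\overline{X}} + D = (g+1)\lambda.$$
After this rescaling, the lower bound $\alpha_{\mathrm{base}} \geq \frac{g+1}{12}$ is equivalent to the assertion that $\mathbb B(\lambda - tD) \subset D$ for every rational $t \in \; ]0, \tfrac{1}{12}[$.

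Next, I would reduce this family of statements to the single positivity claim $\mathbb B(12 \lambda - D) \subset D$. Granting this, for every rational $t < 1/12$ the identity
$$12(\lambda - tD) = (12\lambda - D) + (1 - 12t)D$$
expresses $12(\lambda - tD)$ as the sum of a $\mathbb Q$-divisor with stable base locus in $D$ and an effective $\mathbb Q$-divisor supported on $D$. The stable base locus of the left-hand side, and hence of $\lambda - tD$ itself, is therefore contained in $D$, as required.

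The remaining step is the inclusion $\mathbb B(12\lambda - D) \subset D$, and this is the heart of Weissauer's argument \cite{weissauer86}. Via the Koecher principle, global sections of $m(12\lambda - D)$ correspond to Siegel cusp forms of weight $12m$ vanishing along $D$ to order at least $m$. For $g = 1$ this is classical, since the powers $\Delta^m$ of the discriminant form provide such sections with no zero in the interior. The main obstacle is to produce, in arbitrary genus $g$, a supply of such cusp forms with no common zero over the interior of $A_g$; Weissauer carries this out through a careful analysis of Siegel modular forms, using theta series and pull-back constructions from lower-genus situations to inductively build forms of the required weight and boundary vanishing. Granting the existence of such a family at every interior point, the inclusion $\mathbb B(12\lambda - D) \subset D$ follows, and the previous reductions yield the proposition.
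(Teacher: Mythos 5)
The paper does not give a proof of this proposition; it is a direct citation of Weissauer's paper. Your outline is therefore best judged as an account of how the stated inequality follows from Weissauer's results, and the core of it is sound: the identification $L_0 = \pi^\ast K_{\overline{A_g}^{BB}} = (g+1)\lambda$ is correct, and the bound $\alpha_{\mathrm{base}} \geq \frac{g+1}{12}$ is indeed equivalent to $\mathbb B(\lambda - tD) \subset D$ for all rational $t < 1/12$. Your further reduction to the single inclusion $\mathbb B(12\lambda - D) \subset D$ is a slightly stronger assertion than necessary: since $\alpha_{\mathrm{base}}$ is defined as a supremum, it suffices to verify the inclusion for each $t$ strictly below $1/12$, which is closer to what Weissauer actually establishes (existence, at each slope strictly larger than $12$, of enough cusp forms with no common zero off the boundary).

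However, the opening reduction contains a genuine error. The cover $\overline{A_g(l)} \to \overline{A_g}$ is \emph{not} \'etale over the interior: the quotient map $A_g(l) \to A_g$ ramifies over the elliptic locus, i.e.\ over the image of points of $\mathbb H_g$ with nontrivial stabilizer in $\mathrm{Sp}(2g, \mathbb Z)/\{\pm 1\}$. This is exactly the phenomenon that forces Section 4 of the paper to introduce the conditions $(\mathrm I_{x,d})$ and the exceptional divisor $E$. Moreover, the claim that "the condition that the stable base locus be contained in the boundary behaves well under such finite covers" is not immediate and would need justification: to descend a section one needs it to be Galois-invariant, and the boundary divisor pulls back with multiplicity equal to the ramification order $l$, so the thresholds $\alpha$ are not preserved by pullback. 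In fact, this reduction is unnecessary: Weissauer works directly with Siegel modular forms for the full group $\mathrm{Sp}(2g, \mathbb Z)$, so one should simply identify sections of $m L_\alpha$ on $\overline{A_g}$ with cusp forms for $\mathrm{Sp}(2g,\mathbb Z)$ of weight $m(g+1)$ vanishing to order at least $m\alpha$ at the boundary, and invoke Weissauer's construction of such forms with no common zero off the boundary, bypassing the passage to a level cover entirely.
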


\begin{prop} [\cite{grush09}] \label{grushevsky}
If $\overline{X} = \overline{A_g}$, we have
$$
\alpha_{\mathrm{eff}} \geq \frac{(g+1) \left( 2 g! \, \zeta(2g) \right)^{\frac{1}{g}}}{(2 \pi)^2}. 
$$
\end{prop}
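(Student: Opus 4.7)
The plan is to produce, for every rational $\alpha$ strictly smaller than the stated bound, a nonzero effective $\mathbb Q$-section of $L_\alpha = L_0 \otimes \mathcal O_{\overline{A_g}}(-\alpha D)$ on a toroidal compactification $\overline{A_g}$ from \cite{AMRT}. The first reduction uses Mumford's identification \cite{mum77}: on such a compactification, $L_0 = \mathcal O(K_{\overline{A_g}}+D) = (g+1)\lambda$, where $\lambda$ is the first Chern class of the Hodge line bundle. Consequently, producing a section of $L_0^{\otimes k}\otimes \mathcal O(-k\alpha D)$ is equivalent to producing a Siegel modular form of weight $k(g+1)$ for $\mathrm{Sp}(2g,\mathbb Z)$ with vanishing order at least $k\alpha$ along the toroidal boundary $D$.

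The natural family of candidates is given by Siegel theta series of high-rank even unimodular lattices. For such a lattice $L$ of rank $2w$, the series
$$
\Theta_L^{(g)}(\tau) = \sum_{\mathbf v \in L^g} e^{\pi i \,\mathrm{tr}(\tau\, G(\mathbf v))},
$$
with $G(\mathbf v)$ the Gram matrix of $\mathbf v = (v_1,\ldots,v_g)$, is a Siegel modular form of weight $w$ on $\mathbb H_g$. Inspection of its Fourier expansion at the $0$-dimensional cusps of $\overline{A_g}^{BB}$ shows that the vanishing order of $\Theta_L^{(g)}$ along every component of the toroidal boundary is bounded below by a positive constant (depending only on the toroidal fan) times the minimum $m(L) := \min\{\langle v,v\rangle : v \in L\setminus\{0\}\}$ of $L$. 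Hence the modular slope $\mathrm{ord}_D(\Theta_L^{(g)})/w$ is of order $m(L)/w$.

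The remaining task is to exhibit, for $w \to \infty$, an even unimodular lattice of rank $2w$ with the largest possible minimum. A Minkowski--Hlawka pigeonhole applied to the Siegel mass formula for the genus of even unimodular lattices --- whose total mass is given by Minkowski's explicit formula in terms of Bernoulli numbers, equivalently by a product of special values of $\zeta$ --- produces such a lattice with $m(L)/w$ tending, as $w \to \infty$, to the value $\frac{(g+1)(2g!\zeta(2g))^{1/g}}{(2\pi)^2}$. Combined with the first two steps, this gives the claimed lower bound on $\alpha_{\mathrm{eff}}$.

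The main technical obstacle is the precise extraction of the constant $(2g!\zeta(2g))^{1/g}/(2\pi)^2$: the total mass asymptotically involves $\prod_{k=1}^{w-1}\zeta(2k)/(2\pi)^{2k}$ up to combinatorial factors, and one must carefully balance this growth against a sphere-packing estimate applied to $g$-tuples of lattice vectors (which is where the $g$-th root enters). Carrying this asymptotic analysis through with sharp constants is the technical core of \cite{grush09}.
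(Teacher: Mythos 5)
The paper does not actually prove this proposition: it is quoted from Grushevsky's survey \cite{grush09}, which in turn restates a theorem of Tai \cite{Tai1982}, so there is no argument in the present text to compare against, and I am assessing your sketch on its own terms. Your opening reduction is correct: by Mumford's formula $L_0 = K_{\overline{A_g}}+D=(g+1)\lambda$, the claim is equivalent to producing scalar Siegel modular forms for $\mathrm{Sp}(2g,\mathbb Z)$ whose ratio of boundary vanishing order to weight is at least $(2\,g!\,\zeta(2g))^{1/g}/(2\pi)^2$.

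The central step fails, though. The genus-$g$ theta series $\Theta_L^{(g)}$ has constant Fourier coefficient $a(0)=1$, coming from the tuple $\mathbf v=0\in L^g$; equivalently $\Phi(\Theta_L^{(g)})=\Theta_L^{(g-1)}\neq 0$ under the Siegel operator. Hence $\Theta_L^{(g)}$ does \emph{not} vanish along any component of the toroidal boundary: its slope $\mathrm{ord}_D(\Theta_L^{(g)})/w$ is $0$, not of order $m(L)/w$. Subtracting an Eisenstein series or another theta series of the same weight does produce a cusp form, but its vanishing order is then governed by the first Fourier index at which the two expansions disagree, not by $m(L)$; already at $g=1$, $\Theta_{\Lambda_{24}}-E_{12}$ is a scalar multiple of $\Delta$, of slope $1/12$, even though $m(\Lambda_{24})/\mathrm{rank}(\Lambda_{24})=1/6$. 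There is also a quantitative mismatch: the Minkowski--Hlawka theorem gives $m(L)/\mathrm{rank}(L)$ bounded below by a universal constant independent of $g$, whereas the target slope $(2\,g!\,\zeta(2g))^{1/g}/(2\pi)^2$ depends strongly on $g$ (it equals $1/12$ at $g=1$ and grows like $g/(e(2\pi)^2)$ as $g\to\infty$). The genus dependence therefore cannot be extracted from the minimum of a single auxiliary lattice; in Tai's argument the Minkowski--Hlawka mean-value theorem is instead applied to a count of the small half-integral positive semi-definite $g\times g$ Fourier indices $T$ whose vanishing must be imposed, and it is this $g$-dimensional lattice-point estimate, balanced against the dimension of $M_k(\mathrm{Sp}(2g,\mathbb Z))$, that produces the factor $\zeta(2g)$ and the $g$-th root.
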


Remark that $A_g$ is in general a singular variety, so we define here $\overline{A_g}$ to be a desingularization of the boundary components of $\overline{A_g}^{BB}$. In the following proof, we state a result about the varieties $A_g(l)$, which are \emph{smooth} when $l \geq 3$, so the results we proved until now about smooth quotients apply. Then, the previous two propositions permit to obtain easily the theorems \ref{thmaghypfull} and \ref{thmaghypmod}. 

\begin{proof}[Proof of Theorems \ref{thmaghypfull} and \ref{thmaghypmod}]
We just have to estimate the two ratios appearing in Theorem \ref{mainthm} in the case $\Omega = \mathbb H_g$. For this domain, we can take $\gamma =\frac{2}{g(g+1)}$. Then, by Proposition \ref{weissauer86}, we have 
$$
\frac{1}{\alpha_{\mathrm{base}} \gamma} \leq \frac{12}{g+1} \frac{g (g+1)}{2} = 6 g.
$$
Similarly, by Proposition \ref{grushevsky}, we find 
\begin{align*}
\frac{1}{\alpha_{\mathrm{eff}} \gamma} & \leq \frac{(2 \pi)^2}{(g + 1)(2 g! \, \zeta(2 g) )^{\frac{1}{g}}} \; \frac{g(g+1)}{2} \\
	& \leq \frac{(2 \pi)^2}{ \left[ 2 \sqrt{2 \pi} g^{g + \frac{1}{2}} e^{-g} \right]^{\frac{1}{g}}} \frac{g}{2} \\
	& \leq \frac{e (2 \pi)^2}{2} < 54,
\end{align*}
where we used the fact that $\zeta(2g) > 1$, as well as Stirling's upper bound $g! \geq \sqrt{2\pi} g^{g+\frac{1}{2}} e^{-g}$. Since $\overline{A_g(n)} \longrightarrow \overline{A_g}$ is a cover ramifying at order at least $n$ on each boundary component if we chose two suitable toroidal compactifications (see \cite{nad89}), both results follow from Theorem \ref{mainthm}.
\end{proof}

\section{Hyperbolicity in intermediate dimensions: cases of $\mathbb B^n$ and $\mathbb H_g$}

We now present some effective results derived from Theorem \ref{thmpartial}, based on an explicit computation of the constants $C_p$ when $\Omega$ is the ball, or the generalized Siegel half-space $\mathbb H_g$. 

\subsection{Case of the ball}

\begin{prop} \label{Cpball} Let $\Omega = \mathbb B^n$. Then, for any $p \in \llbracket 1, n \rrbracket$, we have $C_p = \frac{p+1}{n+1}$.
\end{prop}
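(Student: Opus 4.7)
The plan is to compute $C_p$ by directly exploiting the extreme rigidity of the Bergman metric on the ball: on $\mathbb B^n$ the full curvature tensor is determined by the metric up to the two symmetric contractions, so the quantity defining $C_{x,v,p}$ will turn out to be independent of the point $x$, the vector $v$, and even the subspace $V$.

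First I would write down the curvature tensor explicitly. Because $\mathrm{Aut}(\mathbb B^n)$ acts transitively on pairs (point, unit tangent vector), the Bergman metric is homogeneous, and its Chern curvature tensor at any point takes the classical symmetric-space form
\[
i\Theta(h_{\Berg})(u,\bar u, v, \bar v) \;=\; -\,c\,\bigl(\|u\|^2\|v\|^2 \,+\, |\langle u,v\rangle|^2\bigr),
\]
for some positive constant $c$ depending on the normalization. The first step is therefore to determine $c$ from the normalization $\mathrm{Ric}(h_{\Berg}) = -\,\omega_{\Berg}$: tracing over an orthonormal frame $(e_j)_{j=1}^n$ gives $\mathrm{Ric}(v,\bar v) = -c(n+1)\|v\|^2$, so the normalization forces $c = 1/(n+1)$.

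Next I would carry out the trace computation that appears in the definition of $C_{x,v,p}$. Let $V \subset T_{\mathbb B^n,x}$ be a $p$-dimensional subspace containing a non-zero vector $v$, and choose a $h_{\Berg}$-unitary frame $(e_1,\dots,e_p)$ of $V$. Substituting the curvature formula from the previous paragraph yields
\[
\mathrm{Tr}_{h_{\Berg}}^{V}\, i\,\Theta(h_{\Berg})|_V^V(v,\bar v)
\;=\; \sum_{j=1}^p i\Theta(e_j,\bar e_j, v, \bar v)
\;=\; -\frac{1}{n+1}\sum_{j=1}^p\bigl(\|v\|^2 + |\langle e_j,v\rangle|^2\bigr).
\]
Since $v$ itself lies in $V$, Parseval's identity in the unitary frame $(e_j)$ gives $\sum_{j=1}^p|\langle e_j,v\rangle|^2 = \|v\|^2$, so the right-hand side collapses to $-\frac{p+1}{n+1}\|v\|^2$.

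Finally I would observe that this result does not depend on the choice of $V$ (nor on $v$, nor on $x$), so the supremum in the definition of $C_{x,v,p}$ is automatically attained and $C_{x,v,p} = \frac{p+1}{n+1}$ for every admissible triple $(x,v,p)$. Taking the minimum over $v$ then gives $C_p = \frac{p+1}{n+1}$. There is no real obstacle here once one trusts the symmetric-space form of the curvature tensor; the only mildly delicate point is pinning down the constant $c$ from the paper's Ricci-normalization convention, which is what fixes the denominator $n+1$.
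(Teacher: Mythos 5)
Your proof is correct, and it takes a mildly different route from the paper's. The paper works directly in the coordinates $(z_1,\dots,z_n)$ on $\mathbb B^n$: it writes out the Bergman Kähler form explicitly, computes the curvature tensor at the origin as $-i\bigl[\sum_j dz_j\wedge\overline{dz_j}\cdot\mathbb I_n + {}^tX\wedge\overline{X}\bigr]$, uses the $U(n)$-action to put $V=\mathrm{Vect}(\partial_{z_1},\dots,\partial_{z_p})$, and then reads off the trace directly. You instead invoke the fact that $(\mathbb B^n,h_{\Berg})$ is a complex space form --- constant holomorphic sectional curvature --- so that the full curvature tensor is forced to be $i\Theta(u,\bar u,v,\bar v)=-c\bigl(\|u\|^2\|v\|^2+|\langle u,v\rangle|^2\bigr)$, pin down $c=\frac{1}{n+1}$ from the Ricci normalization, and finish with Parseval for the term $\sum_{j=1}^p|\langle e_j,v\rangle|^2=\|v\|^2$. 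Both computations are a few lines long; yours is arguably more conceptual (it shows at once that $C_{x,v,p}$ is independent of the choice of $V$, $v$, $x$, because the curvature tensor has no free parameters beyond $c$), while the paper's is more self-contained in that it does not presuppose the complex-space-form formula, only the explicit Bergman potential. One small remark: you should be explicit that "constant holomorphic sectional curvature for a Kähler metric implies the quoted form of the full curvature tensor" is the nontrivial input you are using (it is a standard rigidity theorem for Kähler space forms, not a tautology); once that is granted, the rest of your argument is airtight, and the Parseval step is the clean way to see why the constant is $p+1$ rather than $p$.
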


\begin{proof}
We endow $\mathbb B^n$ with its usual coordinates $(z_1, ..., z_n)$. Then the Bergman metric, normalized so that $- \mathrm{Ric} (h_{\Berg}) = \omega_{\Berg}$, takes the form
$$
\omega_{\Berg} = i (n+1) \, \,  \frac{(1 - \norm{z}^2) \sum_j dz_j \wedge d\overline{z_j} + \left( \sum_j \overline{z_j} dz_j\right) \wedge \left( \sum_j z_j d \overline{z_j} \right)}{(1 - \norm{z})^2}.
$$
Let $V \subset T_{\mathbb B^n, 0}$ be a subspace of dimension $p$. By letting $U(n)$ acting on $\mathbb B^n$, we can assume $V = \mathrm{Vect} \left( \frac{\partial}{\partial z_1}, ..., \frac{\partial}{\partial z_p} \right)$. Now, a direct computation yields
$$
i \Theta(h_{\mathrm{Berg}})_0 = i \overline{\partial}\partial h_{\mathrm{Berg}} (0) = - i \left[ \sum_{j=1}^n dz_j \wedge \overline{dz_j} \cdot \mathbb I_n + {}^t X \wedge \overline{X} \right],
$$
where $X$ is the row vector $(dz_1, ..., dz_n)$. Restricting to $V$, we find
\begin{align*}
i \,j^\ast \mathrm{Tr}^{V} \left. \left[ \Theta(h_{\mathrm{Berg}}) \right] \right|_{V}^{V} & = - p \, j^\ast \left( i \sum_{j=1}^n dz_j \wedge \overline{dz_j} \right) + i \mathrm{Tr}\left( {}^t X \wedge \overline{X} \right)|_{V}^{V} \\
	& = - i (p +1 ) \sum_{j=1}^p dz_j \wedge \overline{dz_j} \\
	& = - \frac{p+1}{n+1} \, \left. \omega_{\mathrm{Berg}, 0} \right|_V.
\end{align*}
This gives the result, by definition of $C_p$. 
\end{proof}

We can now use the following result of Bakker and Tsimerman to deduce Theorem \ref{thmball} and Corollary \ref{corolball}.

\begin{prop} [Bakker-Tsimerman \cite{baktsi15}] \label{propbaktsi} Assume $\Omega = \mathbb B^n$ and $\Gamma$ has only unipotent parabolic isometries. Then $\alpha_{\mathrm{eff}} \geq \alpha_{\mathrm{base}} \geq \frac{n+1}{2 \pi}$.
\end{prop}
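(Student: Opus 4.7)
Proposition \ref{propbaktsi} is quoted verbatim from Bakker--Tsimerman \cite{baktsi15}, so in the paper the ``proof'' will simply be a reference to their work. Nevertheless, let me sketch how I would attempt this independently, and then indicate how Theorem \ref{thmball} and Corollary \ref{corolball} follow once the proposition is granted.

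The first inequality $\alpha_{\mathrm{eff}} \geq \alpha_{\mathrm{base}}$ is essentially formal: for any rational $\alpha < \alpha_{\mathrm{base}}$ the stable base locus of $L_0 \otimes \mathcal O(-\alpha D)$ is contained in the proper subset $D$, so some power admits a global section not vanishing identically on $\overline{X}$, and hence the $\mathbb Q$-line bundle is effective. The substantial content is therefore the lower bound $\alpha_{\mathrm{base}} \geq (n+1)/(2\pi)$.

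For this bound, my plan would be to produce, for every rational $\alpha < (n+1)/(2\pi)$ and every point $x \notin D$, a nonzero section of some power $L_0^{\otimes m} \otimes \mathcal O(-\lceil m \alpha \rceil D)$ that does not vanish at $x$. Global sections of $L_0^{\otimes m}$ on $\overline{X}$ correspond to $\Gamma$-automorphic forms of weight $m(n+1)$ on $\mathbb B^n$ with respect to the standard automorphy factor; vanishing to order $k$ along $D$ translates, via the toroidal model, into Fourier--Jacobi expansions at each cusp whose nonzero terms all have index $\geq k$. The natural candidates for such sections are Poincaré--Eisenstein-type series built from a Bergman kernel peaked at $x$. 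The hypothesis that all parabolic isometries of $\Gamma$ are unipotent is what makes the cusp stabilizers purely translational, so that the toroidal boundary components are smooth abelian varieties and the series are tractable.

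The main obstacle is proving the sharp quantitative estimate on the vanishing order at the cusps. I would expect this to trace back to the normalization identity $c_1(K_{\mathbb B^n}) = \tfrac{n+1}{2\pi}[\omega_{\mathrm{Berg}}]$ combined with volume estimates on cusp neighborhoods: the factor $1/(2\pi)$ in the bound is essentially the normalization factor relating curvature forms to Chern classes. Once Proposition \ref{propbaktsi} is granted, Theorem \ref{thmball} follows by applying Theorem \ref{thmpartial} (or Theorem \ref{mainthm}) to $\overline{X'}$: Proposition \ref{Cpball} identifies $C_p = (p+1)/(n+1)$, so the required condition $l > 1/(\alpha_{\mathrm{base}} C_p) = 2\pi/(p+1)$ is equivalent to $p+1 > 2\pi/l$, which holds exactly when $p \geq \lceil 2\pi/l \rceil - 1$ (with strict inequality automatic, since $2\pi$ is irrational). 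Corollary \ref{corolball} is then the case $l=1$, giving $p = \lceil 2\pi \rceil - 1 = 6$.
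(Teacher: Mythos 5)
Your reading matches the paper: Proposition \ref{propbaktsi} is invoked as a citation, with the only gloss being that under the unipotent-parabolics hypothesis Mok's minimal toroidal compactification $\overline{X}$ satisfies $L_0 = K_{\overline{X}} + D$, and Bakker and Tsimerman prove $K_{\overline{X}} + (1-\lambda)D = L_0 \otimes \mathcal O(-\lambda D)$ is \emph{ample} for every $0 < \lambda < (n+1)/(2\pi)$; ampleness forces the stable base locus to be empty and hence $\alpha_{\mathrm{base}} \geq (n+1)/(2\pi)$. Your formal inequality $\alpha_{\mathrm{eff}} \geq \alpha_{\mathrm{base}}$ and the arithmetic deduction of Theorem \ref{thmball} and Corollary \ref{corolball} from Theorem \ref{mainthm} and Proposition \ref{Cpball} — including the use of irrationality of $2\pi/l$ to turn the strict inequality into $p \geq \lceil 2\pi/l\rceil - 1$ — coincide with the paper's argument.

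One small caution about your imagined internal route to the Bakker--Tsimerman bound: their proof does not pass through Poincaré--Eisenstein series or Fourier--Jacobi expansions. It is a metric argument, much in the spirit of the rest of this paper: they analyze the Kähler--Einstein (Bergman) metric on horoball neighborhoods of the cusps of the minimal toroidal model, show it induces a singular metric on $K_{\overline{X}}+D$ with controlled logarithmic growth and positivity near $D$, and convert that positivity into ampleness of $K_{\overline{X}}+(1-\lambda)D$ for $\lambda < (n+1)/(2\pi)$; the $2\pi$ appears in the normalization of the Poincaré-type boundary estimate, not from a $c_1$ identity. This does not affect anything you conclude, but you should not expect an automorphic-forms proof when consulting \cite{baktsi15}.
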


Actually, when $\Gamma$ has only unipotent parabolic isometries, Mok \cite{mok12} gives an explicit description of a \emph{minimal} toroidal compactification of $X$. Then, Bakker and Tsimerman show in \cite{baktsi15} that for such a minimal compactification, and for any $0 < \lambda < \frac{n+1}{2\pi}$, the divisor $K_{\overline{X}} + (1 - \lambda) D$ is ample.
\medskip

Now, to prove Theorem \ref{thmball}, it suffices to combine Theorem \ref{mainthm} with the previous Proposition \ref{Cpball} and Proposition \ref{propbaktsi}. Corollary \ref{corolball} follows easily.

\subsection{Case of the Siegel half-space $\mathbb H_g$} \label{siegelcase}

Let us now deal with the case where $\Omega = \mathbb H_g$, for some $g \in \mathbb N \setminus\{ 0, 1 \}$. The proof of Corollary \ref{corolmgn} will be given at the end of the section.
\medskip

The hermitian symmetric space $\mathbb H_g$ is the classical hermitian symmetric space $D_g^\mathrm{III}$ as denoted in \cite{mok89}, and it can be embedded in $D_{g,g}^{\mathrm{I}} \cong \left\{ M \in \mathcal M_g(\mathbb C) \; ; \; \norm{M}^2 < 1 \right\}$ as
$$
\mathbb H_g \cong D_g^{\mathrm{III}} \cong \left\{ M \in \mathcal{M}_g(\mathbb C) \; ; \; {}^t M = M \; \text{and} \; \norm{M}^2 < 1 \right\}.
$$

We will first determine the values of the constants $C_p$ when $g \geq 8$. To simplify the computations, we will change our normalization on the Bergman metric: we let $h$ be the standard Kähler-Einstein metric on $\mathbb H_g$, with the normalization $\mathrm{Ric}(h) = - (g+1) \omega_h$.

Before we start the proof, and introduce the function which will allow us to compute the constants $C_p$ (see Definition \ref{defiF}), let us recall a few facts concerning the bisectional curvature on $\mathbb H_g$. 
\medskip

The tangent space to $0 \in D_g^{\mathrm{III}}$ is isomorphic to the vector space of symmetric square matrices $S_g(\mathbb C)$, and  the metric $h$ is determined by the norm 
$$
\norm{X}_h^2 = \mathrm{Tr} (X \overline{X}). 
$$

For any $X, Y \in S_g(\mathbb C)$, we let $B_0 (X, Y) = i \Theta(X,  \overline{X}, Y, \overline{Y})$, where $\Theta$ is the curvature form of the metric $h$. Then, by \eqref{defibisectional}, we have:  $B_0 (X, Y) = B(X, Y) \norm{X}^2 \norm{Y}^2$.

According to \cite{mok89}, the function $B_0$ can be computed as follows: 
\begin{equation} \label{bisectexpr}
\forall X, Y \in S_g(\mathbb C), \; B_0(X,Y)  = - 2 \norm{ X \overline{Y}}^2,
\end{equation}
with the natural identification $T_0 D_g^{III} \cong S_g(\mathbb C)$. Let $X \in T_0 D_g^{III} \cong S_g(\mathbb C)$. Still by \cite{mok89}, by letting $U(g)$ act on $T_0 D_g^{III}$, we can assume that
$$
X = \mathrm{diag}(\alpha_1, ..., \alpha_g),
$$
where $\alpha_1, ..., \alpha_g$ are real numbers satisfying $\alpha_1 \geq ... \geq \alpha_g \geq 0$. For each $i$, we let $m_i = \alpha_i^2$.

The eigenspaces of the quadratic form $Y \in \mathcal S_g(\mathbb C) \longrightarrow -B_0(X, Y)$ with respect to the hermitian metric $h$ admit the following simple description. For $i,j \in \llbracket 1, g \rrbracket$, let $E_{i,j}$ denote the standard elementary matrix at the $i$-th row and $j$-th column. Then, we see directly from the expression \eqref{bisectexpr} that the quadratic form $-B_0(X, \cdot)$ admits the following eigenvalues.
$$
\left\{ \begin{array}{cc}
	2 m_i & \text{with multiplicity 1 for each} \; i \in \llbracket 1, g \rrbracket, \; \text{with unitary eigenvector}\; E_{i,i}\\
 	m_i + m_j & \text{with multiplicity 1 for each}  \; i,j \in \llbracket 1, g \rrbracket \; \text{with} \; i < j, \vspace{-0.75em} \\
	& \; \text{and with unitary eigenvector}\; F_{i,j} = \frac{1}{\sqrt{2}} \left( E_{i,j} + E_{i,j} \right).
	\end{array} \right.
$$

For each $p \in \llbracket 1, \frac{g(g+1)}{2} \rrbracket$ and each $X \in \mathcal S_n(\mathbb C)$ of norm $1$, we introduce the constant
$$
D_{X,p} = \min_{V \ni X, \dim V = p} \mathrm{Tr}_{V}^{V} \left( -B_0(X, \cdot)|_V \right),
$$
where the trace is taken with respect to the hermitian metric $h$ on the subspaces $V$.

Remark that because of \eqref{defCxvp}, we have 
$$
C_{0, X, p} = \frac{D_{X,p}}{D_{X, n}},
$$
where $n = \frac{g(g+1)}{2}$ is the dimension of $\mathbb H_g$. With our choice of normalization for $h$, we have $D_{X, n} = g + 1$ for any $X$ with $\norm{X} = 1$. 

Thus, to find the value of $C_p$, it suffices to determine the value of $D_p = \min_{\norm{X} = 1} D_{X, p}$; the constant $C_p$ will then be given by $C_p = \frac{D_p}{g+1}$. For a unitary tangent vector $X$, we can also write $D_{X, p}$ as follows
$$
D_{X,p} = -B_0(X,X) + \min_{V \subset X^{\perp}, \dim V = p - 1} \mathrm{Tr}_{V}^{V} \left( -B_0(X, \cdot)|_V \right).
$$

This second term on the right hand side is actually equal to the sum of the $p-1$ smallest eigenvalues of the quadratic form $-B_0(X, \cdot)$, restricted to $X^\perp$.
\medskip

We will need to consider a certain set of combinatorial data, on which we will define the function $\mathcal F$, whose minimum will be be the constant $D_p$, as stated in Proposition \ref{propeqDp}. For this, we let $T = \left\{ (i,j) \in \llbracket 1, g \rrbracket^2 ; i \leq j \right\}$, and let $\mathcal S_p$ be the set of subsets of $T$ of cardinal $p-1$, with less than $g-1$ elements on the diagonal. We also introduce the simplex $\Delta_g = \left\{ (m_1, ..., m_g) \in \mathbb R_+^g \; ; \; \sum_i m_i = 1 \; \text{and} \; m_1 \geq ... \geq m_g \right\}$.

We can now define the function $\mathcal F$.

\begin{defi} \label{defiF} 
For any $\Gamma \in \mathcal S_p$ and any $\underline{m} \in \Delta_g$, we define the following number:
\begin{displaymath}
\mathcal F(\underline{m}, \Gamma) = \begin{cases}\itemsep=3em
	\; 2 + \sum_{\underset{j > i}{(i,j) \in \Gamma}} \; (m_i + m_j)   \; \; \; \text{if} \; \Gamma \; \text{has} \; g-1 \; \text{diagonal elements} \\
	\; 2 \sum_{i = 1}^g m_i^2 + \sum_{(i,j) \in \Gamma} \; (m_i + m_j)  \; \; \; \text{otherwise} \\ 
	\end{cases}
\end{displaymath}
\end{defi}

\begin{prop} \label{propeqDp} We have
$$
D_p \; = \; \min \left\{ \mathcal{F} (\underline{m}, \Gamma) \; ; \; \underline{m} \in \Delta_g , \; \Gamma \in \mathcal S_p \right\}.
$$
\end{prop}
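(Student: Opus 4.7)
The plan is to reduce the geometric optimization to a finite-dimensional combinatorial problem. First, by the $U(g)$-action on $T_0 D_g^{\mathrm{III}} \cong S_g(\mathbb{C})$, which preserves $h$ and hence the bisectional curvature, I reduce to $X = \mathrm{diag}(\alpha_1, \ldots, \alpha_g)$ with $\alpha_1 \geq \cdots \geq \alpha_g \geq 0$ and $\sum \alpha_i^2 = 1$; then $\underline{m} = (\alpha_i^2) \in \Delta_g$. The expression $-B_0(X, Y) = 2\|X\overline{Y}\|^2 = 2\sum_i m_i |Y_{ii}|^2 + 2 \sum_{i<j}(m_i+m_j)|Y_{ij}|^2$ shows the form is block-diagonal with respect to $S_g(\mathbb{C}) = \mathrm{span}(E_{i,i}) \oplus \mathrm{span}(F_{i,j})$. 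For any $p$-dimensional $V \ni X$, setting $P_i = \|\pi_{E_{i,i}}|_V\|_{HS}^2$ and $Q_{ij} = \|\pi_{F_{i,j}}|_V\|_{HS}^2$, I obtain
$$
\mathrm{Tr}(-B_0(X,\cdot)|_V) = \sum_i 2m_i P_i + \sum_{i<j}(m_i+m_j) Q_{ij},
$$
subject to $P_i \in [m_i, 1]$ (from $X \in V$, $\|X\|=1$), $Q_{ij} \in [0,1]$, and $\sum_i P_i + \sum_{i<j} Q_{ij} = p$.

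For the lower bound, I rewrite the trace as $2\sum_i m_i^2 + \sum 2m_i(P_i-m_i) + \sum(m_i+m_j)Q_{ij}$ and view the second part as a capacitated continuous knapsack distributing $p-1$ units of budget at per-unit costs $2m_i$ or $m_i+m_j$, with capacities $1-m_i$ on the $P$-slots and $1$ on the $Q$-slots. Sorting slots by cost, a rearrangement argument shows that this capacitated continuous minimum dominates the uncapacitated integer selection $\min_{|\Gamma|=p-1} \sum_{(i,j) \in \Gamma}(m_i+m_j)$: the capacities strictly less than $1$ on the diagonal force the fractional solution to overflow into more expensive slots. Writing $\Gamma = \Gamma_D \sqcup \Gamma_O$ with $\Gamma_D \subset \{(i,i)\}$ and $\Gamma_O \subset \{(i,j): i<j\}$, a swap argument guarantees the minimizing $\Gamma$ can be chosen in $\mathcal{S}_p$, the potential $|\Gamma_D|=g$ case being handled by Case 1 of $\mathcal{F}$ (where all diagonal slots are saturated simultaneously). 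Combined with the baseline $2\sum m_i^2$, this gives $\mathrm{Tr}(-B_0(X,\cdot)|_V) \geq \mathcal{F}(\underline{m}, \Gamma)$, and minimizing over $\underline{m}$ yields $D_p \geq \min \mathcal{F}$.

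For the reverse inequality $D_p \leq \min \mathcal{F}$, I realize the minimum by an explicit subspace at an optimal pair $(\underline{m}^*, \Gamma^*)$. The key observation is that in Case 2 the partial derivative $\partial \mathcal{F}/\partial m_{i_0} = 4m_{i_0}+2$ is strictly positive for every $(i_0,i_0) \in \Gamma_D^*$; a Lagrangian analysis combined with the ordering $m_1 \geq \cdots \geq m_g$ and the simplex constraint then forces $m_i^* = 0$ for every $i \in I_D^* := \{i : (i,i) \in \Gamma_D^*\}$. Consequently, the subspace
$$
V^* = \mathbb{C} X^* \oplus \bigoplus_{(i,i) \in \Gamma_D^*} \mathbb{C} E_{i,i} \oplus \bigoplus_{(i,j) \in \Gamma_O^*} \mathbb{C} F_{i,j}
$$
is an orthogonal direct sum of dimension $p$ (since $E_{i,i} \perp X^*$ whenever $m_i^* = 0$), and a direct calculation gives $\mathrm{Tr}(-B_0(X^*,\cdot)|_{V^*}) = \mathcal{F}(\underline{m}^*, \Gamma^*)$. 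In Case 1, the analogous $V^* = \mathrm{span}(E_{i,i}) \oplus \mathrm{span}(F_{i,j} : (i,j) \in \Gamma_O^*)$ contains $X^*$ automatically and realizes the trace. The main obstacle is making the Lagrangian/perturbation step rigorous in Case 2: perturbations must respect the ordering on $\underline{m}^*$, and one must simultaneously track the trade-off between the $2\sum m_i^2$ term and the $\sum_\Gamma(m_i+m_j)$ term as mass is moved between coordinates; a secondary technical point is comparing the two cases of $\mathcal{F}$ at the optimum to ensure global, rather than case-restricted, realizability.
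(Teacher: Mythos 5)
Your setup and the upper-bound construction are in the same spirit as the paper's, but the lower-bound argument has a genuine gap that your chain of inequalities cannot close. You relax the constraint ``$X\in V$, $\dim V = p$'' to the projection-diagonal constraints $P_i\in[m_i,1]$, $Q_{ij}\in[0,1]$, $\sum P_i + \sum Q_{ij} = p$. This relaxation is strictly weaker than what is needed: the identity $\Pi X = X$ also couples the $P_i$'s (e.g.\ if $E_{2,2},E_{3,3}\in V$ and $X\in V$ with all $m_i>0$, then $E_{1,1}\in V$ is forced), and dropping those couplings makes the capacitated knapsack minimum fall \emph{below} $\min_\Gamma \mathcal F(\underline m,\Gamma)$. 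More seriously, even the uncapacitated integer selection $\min_{|\Gamma|=p-1}\sum_\Gamma(m_i+m_j)$ (which your knapsack dominates) is in general \emph{strictly less} than $\min_\Gamma\mathcal F(\underline m,\Gamma)$: when the optimal $\Gamma$ has exactly $g-1$ diagonal elements, the paper's Case~1 value is $2+\text{(off-diag part)}$ whereas the naive sum is $2\sum m_i^2 + 2(1-m_1) + \text{(off-diag part)}$, and $2 > 2\sum m_i^2 + 2(1-m_1)$ unless $\underline m$ is a vertex of $\Delta_g$. A concrete counterexample: $g=3$, $p=4$, $\underline m = (1/2,1/4,1/4)$ gives naive sum $= 9/4$, capacitated knapsack $= 19/8$, but $\min_\Gamma\mathcal F = 5/2$ (and the true $D_{X,p}$ is also $5/2$). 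So the chain ``knapsack $\geq$ naive $\geq \min\mathcal F$'' collapses.

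The paper's proof avoids this by \emph{not} treating the trace as a decoupled knapsack. It writes $D_{X,p} = -B_0(X,X) + \sum_{\lambda\in S(X)_{p-1}}\lambda$ where $S(X)_{p-1}$ consists of the $p-1$ smallest eigenvalues of $-B_0(X,\cdot)|_{X^\perp}$, splits $X^\perp = (E^{\mathrm{diag}}\cap X^\perp)\oplus E^{\mathrm{off}}$, and then distinguishes whether the number $k$ of those eigenvalues with diagonal eigenspaces is $\leq g-2$ or $= g-1$. For $k\leq g-2$ it invokes Cauchy interlacing on $E^{\mathrm{diag}}\cap X^\perp$; for $k = g-1$ it uses the \emph{exact} identity $-B_0(X,X) + \mathrm{Tr}_{E^{\mathrm{diag}}\cap X^\perp}(\cdot) = \mathrm{Tr}_{E^{\mathrm{diag}}}(\cdot) = 2$, which is exactly what produces the ``$2$'' in Case~1 of $\mathcal F$. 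You have no analogue of the interlacing step nor of this exact diagonal-trace identity; both are essential and neither is recoverable from the projection-variable relaxation as stated. Your upper bound (construct $V^*$ from the $E_{i,i}$, $F_{i,j}$ indexed by $\Gamma_0$ at an optimizer with $m_i^*=0$ for $(i,i)\in\Gamma_D^*$) is essentially the paper's Lemma~\ref{lemfinal}, but you flag the Lagrangian step yourself; the paper instead proves $l=k$ in Lemma~\ref{lemlequalsk} via the quadratic-programming Lemma~\ref{lemvanishing1}, and this should replace your heuristic.
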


We break down the proof of this proposition into a series of lemmas. We first show a lower bound for $D_p$. The converse upper bound will proved in Lemma \ref{lemfinal}.

\begin{lem} $D_p \geq \min \left\{ \mathcal{F} (\underline{m}, \Gamma) \; ; \; \underline{m} \in \Delta_g , \; \Gamma \in \mathcal S_p \right\}$.
\end{lem}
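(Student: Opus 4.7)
The plan is to fix a unit tangent vector $X \in T_0 \mathbb H_g \cong S_g(\mathbb C)$ and exploit the $U(g)$ action (which preserves both $h$ and the bisectional curvature) to reduce to $X = \mathrm{diag}(\alpha_1,\ldots,\alpha_g)$ with $\alpha_i \geq 0$ nonincreasing and $m_i = \alpha_i^2$ summing to $1$. Any $p$-dimensional subspace $V \ni X$ splits orthogonally as $V = \mathbb C \cdot X \oplus W$ with $W \subset X^\perp$ of dimension $p-1$, and additivity of the trace on orthogonal sums yields
\[
\mathrm{Tr}^V_V\bigl(-B_0(X,\cdot)|_V\bigr) = -B_0(X,X) + \mathrm{Tr}^W_W\bigl(-B_0(X,\cdot)|_W\bigr) = 2\sum_i m_i^2 + T(W).
\]
By the Ky--Fan minimum principle, $\min_W T(W)$ equals the sum of the $p-1$ smallest eigenvalues of the Hermitian form $-B_0(X,\cdot)$ restricted to $X^\perp$.

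To identify this restricted spectrum, use the orthogonal decomposition $S_g(\mathbb C) = D \oplus F$ with $D = \mathrm{span}(E_{k,k})$ and $F = \mathrm{span}(F_{i,j} : i<j)$, which is preserved by $-B_0(X,\cdot)$ and in which $F$ already sits inside $X^\perp$. On $F$ the form is diagonal with eigenvalues $m_i+m_j$. On $D \cap X^\perp$ it is the restriction of $\mathrm{diag}(2m_1,\ldots,2m_g)$ to the hyperplane orthogonal to $X$, so Cauchy's interlacing theorem produces eigenvalues $\nu_1 \leq \cdots \leq \nu_{g-1}$ satisfying $\nu_k \geq 2m_{g-k+1}$, while a direct trace computation gives $\sum_{k=1}^{g-1} \nu_k = 2 - 2\sum_i m_i^2$.

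Any selection of $p-1$ smallest eigenvalues partitions into off-diagonal indices $\Gamma_{\mathrm{off}} \subset \{(i,j) : i<j\}$ and $s$ indices $k_1 < \cdots < k_s$ from the $\nu$'s, with $|\Gamma_{\mathrm{off}}| + s = p-1$ and $s \leq g-1$. If $s \leq g-2$, applying the interlacing bound $\nu_{k_\ell} \geq 2 m_{g-k_\ell+1}$ and setting $\Gamma := \Gamma_{\mathrm{off}} \cup \{(g-k_\ell+1,\, g-k_\ell+1) : \ell\}$ yields a $\Gamma \in \mathcal S_p$ with fewer than $g-1$ diagonal entries such that $2\sum m_i^2 + (\text{selected sum}) \geq \mathcal F(\underline m, \Gamma)$, matching the second branch of Definition~\ref{defiF}. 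If $s = g-1$, substitute the exact trace identity $\sum_k \nu_k = 2 - 2\sum m_i^2$ directly: the $2\sum m_i^2$ terms cancel, leaving $2 + \sum_{\Gamma_{\mathrm{off}}}(m_i+m_j)$, which is the first branch of $\mathcal F$. In either case $D_{X,p} \geq \mathcal F(\underline m,\Gamma)$ for some admissible $\Gamma$, and minimizing over $X$ (hence over $\underline m \in \Delta_g$) gives the desired lower bound on $D_p$.

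The principal technical point is the clean interplay between Cauchy interlacing (needed when $s < g-1$) and the exact trace identity on $D \cap X^\perp$ (needed when $s = g-1$); these two facts in their two complementary regimes produce precisely the two-case structure of $\mathcal F$. The remaining ingredients, namely the $U(g)$-diagonalization of $X$, the Ky--Fan trace minimization, and the explicit bisectional-curvature spectrum recorded in \eqref{bisectexpr}, are essentially formal.
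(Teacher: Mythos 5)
Your proof is correct, and it follows the same overall two-case structure as the paper's (the split on the number $s$ of selected eigenvalues coming from the diagonal block $D\cap X^\perp$). The key technical difference lies in how the case $s\leq g-2$ is handled. The paper bounds $\sum S(X)_{p-1}$ from below by the sum of the $p-1$ smallest eigenvalues of the \emph{unrestricted} form $-B_0(X,\cdot)$ on all of $S_g(\mathbb C)$, and then has to massage the resulting index set $\Gamma$ combinatorially to make it land in $\mathcal S_p$ with few enough diagonal elements. You instead apply Cauchy interlacing \emph{directly} to the diagonal block, obtaining $\nu_{k_\ell}\geq 2m_{g-k_\ell+1}$ for each selected eigenvalue, and define $\Gamma=\Gamma_{\mathrm{off}}\cup\{(g-k_\ell+1,g-k_\ell+1)\}$. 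This produces an element of $\mathcal S_p$ with exactly $s\leq g-2$ distinct diagonal entries by construction, so the second branch of $\mathcal F$ applies automatically — no re-arrangement step is needed, and no worry that the output $\Gamma$ has $g-1$ diagonal entries (which would trigger the first branch of $\mathcal F$, a formula the paper's inequality does not directly dominate). Your handling of $s=g-1$ via the exact trace identity on $D\cap X^\perp$ is the same as the paper's. Overall your route is slightly cleaner and more explicit: you keep track of which restricted eigenvalues are picked, rather than passing through the unrestricted spectrum, and the two complementary ingredients — interlacing for $s\leq g-2$, exact trace for $s=g-1$ — line up precisely with the two branches of $\mathcal F$.
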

\begin{proof}
Let $X = \mathrm{diag}(\alpha_1, ..., \alpha_g)$ be a real unitary diagonal matrix with $\alpha_1 \geq ... \geq \alpha_g \geq 0$, realizing the minimum $D_p = D_{X, p}$. For each $i$, let $m_i = \alpha_i^2$ and let $\underline{m} = (m_1, ..., m_g)$.
\medskip

Let $S(X)_{p-1}$ be the set of the $p-1$ smallest eigenvalues of $-B(X, \cdot)|_{X^\perp}$. Since the space $E^{\mathrm{diag}}$ of diagonal matrices and the space $E^{\mathrm{off}} = \mathrm{Span}(F_{i,j})_{i < j}$ are orthogonal with respect to the quadratic form $ Y \mapsto B_0(X, Y)$, the eigenspaces corresponding to the elements of $S(X)_{p-1}$ split accordingly to the direct sum $X^\perp = (E^{\mathrm{diag}} \cap X^\perp) \oplus E^{\mathrm{off}}$. Let $k$ be the number of eigenvalues of $S(X)_{p-1}$ whose eigenspace is included in $E^{\mathrm{diag}}$, counted with multiplicities. We have to cases to deal with, depending on the value of $k$.
\medskip

Assume first that $k \leq g-2$. Then, by definition of $D_{X, p}$, we have
\begin{align*}
D_{X, p} & = - B_0(X, X) + \sum_{\lambda \in S(X)_{p-1}} \lambda \\
	 & =  2 \sum_{i = 1}^{g} \alpha_i^4 + \sum_{\lambda \in S(X)_{p-1}} \lambda.
\end{align*}
 
 Then, the sum of the elements of $S(X)_{p-1}$ is bounded from below by the sum of the $p-1$ smallest eigenvalues of $B_0(X, \cdot)$, with multiplicities. The eigenvalues of $B_0(X, \cdot)$ being equal to the $m_i + m_j$, for $i \leq j$, we can choose $\Gamma \subset T$ such that these $p-1$ eigenvalues are
$$
\left\{ m_i + m_j \; ; \; (i,j) \in \Gamma \right\},
$$
with multiplicities. Since the $m_i$ are non-increasing, and since $\mathrm{Card}(\Gamma) \leq  \frac{g(g+1)}{2} - 1$, we see easily that we can suppose that $\Gamma$ has less than $n-1$ diagonal elements (or else we could shift one of these elements the right to get a new set $\Gamma'$ with $\mathcal F(\underline{m}, \Gamma') \leq \mathcal F(\underline{m}, \Gamma)$ ). Consequently, $\Gamma \in \mathcal S_p$.

But then we have

$$
D_{X,p} \geq 2 \sum_i m_i^2 + \sum_{(i,j) \in \Gamma} (m_i + m_j) = \mathcal F(\underline{m}, \Gamma).
$$
 Since $D_p = D_{X, p}$, this gives the result in that case.
\medskip

Suppose now that $k = g-1$. Then, if $\Sigma \subset S(X)_{p-1}$ denotes the set of all diagonal eigenvalues of $-B_0(X, \cdot)|_{X^\perp}$, we have $\mathrm{Card} (\Sigma) = g-1 = \mathrm{dim} (X^\perp \cap E^{\mathrm{diag}})$, so
\begin{align*}
-B_0(X, X) + \sum_{\lambda \in \Sigma} \lambda & = -B_0(X,X) - \mathrm{Tr}_{E^{\mathrm{diag}} \cap X^\perp} B_0(X, \cdot)|_{X^\perp} \\
	& = - \mathrm{Tr}_{E^{\mathrm{diag}}} B_0(X, \cdot)|_{E^{\mathrm{diag}}} \\
	& = 2 \sum_{i=1}^g m_i \\
	& = 2 \norm{X}^2 = 2.
\end{align*}
All the elements of $S(X)_{p-1}$ which are not diagonal have their eigenspaces included in $E^{\mathrm{off}}$: they are of the form $m_i + m_j$, for some $j > i$. Let $\Gamma_+$ be the set of all the $(i,j)$ appearing this way, and let $\Gamma = \Gamma_+ \cup \left\{ (i,i) ; i \neq 1 \right\}$. Clearly, $\Gamma \in \mathcal S_p$. Thus we have
$$
D_{X,p} = 2 + \sum_{(i,j) \in \Gamma_+} (m_i + m_j) = \mathcal F(\underline{m}, \Gamma),
$$
which gives the result.
\end{proof}

Now, we choose a minimizing pair for the function $\mathcal F$, whose properties will be studied in the rest of the section.

\begin{defi} \label{defim0g0} Let $(\underline{m_0}, \Gamma_0) \in \Delta_g \times \mathcal S_p$ minimizing $(\underline{m}, \Gamma) \mapsto \mathcal F( \underline{m}, \Gamma)$, and we suppose in addition that $(\underline{m_0}, \Gamma_0)$ satisfies the following conditions, stated in order of decreasing priority.
\begin{enumerate} \itemsep=0em
\item $\underline{m_0}$ has the largest possible $l$ such that $(m_0)_{g-l+1} = ... = (m_0)_g = 0$ ;
\item among all sets $\Gamma$ minimizing the function $\Gamma \mapsto \mathcal F \left(\underline{m_0}, \Gamma \right)$, $\Gamma_0$ has the largest possible number of diagonal elements, the off-diagonal elements are maximal for the lexicographic order, and the diagonal elements have the largest possible indices.
\end{enumerate}
\end{defi}

In the rest of the section, $l$ will have the meaning given in the first point of Definition \ref{defim0g0}.

\begin{rem}
Since the elements of $\underline{m_0}$ are non-increasing, then $\Gamma_0$ is \emph{saturated}, in the following sense: for any off-diagonal element $(i,j) \in \Gamma_0$, we must have $\left\{ (s,t) ; s \geq i, t \geq j \right\} \subset \Gamma_0$. Indeed, if it were not the case, we could replace $(i,j)$ by an element $(s,t)$ such that $m_i + m_j \geq m_s + m_t$ and $(k,l) \geq (s,t)$, with respect to the lexicographic order. Also, if $(i,i) \in \Gamma_0$, we must have $\left\{ (j,j) ; j \geq i \right\} \subset \Gamma_0$.
\end{rem}

In the next lemma, we show a first restriction on the shape of $\Gamma_0$. In the rest of the section, $k$ will denote the number of diagonal elements of $\Gamma_0$. Then the diagonal elements of $\Gamma_0$ are the $(i,i)$, for $i \geq g-k+1$.

\begin{lem} \label{lemlequalsk} We have $l = k$. 
\end{lem}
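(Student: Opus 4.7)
The plan is to derive contradictions from each of the strict inequalities $l<k$ and $l>k$, using the minimality of $\mathcal F(\underline{m_0}, \Gamma_0)$ together with the tiebreakers in Definition \ref{defim0g0}. Each direction boils down to a single exchange move on $\Gamma_0$.

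For the direction $l\geq k$, I would assume $l<k$. Since $\underline{m_0}$ is non-increasing with its last $l$ entries equal to zero and $g-k+1\leq g-l$, this forces $(m_0)_{g-k+1}>0$. The diagonal pair $(g-k+1,g-k+1)$ lies in $\Gamma_0$, and I would replace it by an off-diagonal pair of the form $(i,g)\notin\Gamma_0$ with $i\in\{g-l+1,\dots,g-1\}$, so that $(m_0)_i=(m_0)_g=0$. The existence of at least one such pair outside $\Gamma_0$ follows from a short saturation count: if every $(i,g)$ with $i\in\{g-l+1,\dots,g-1\}$ belonged to $\Gamma_0$, the saturation property recalled in the Remark following Definition \ref{defim0g0} would force further elements into $\Gamma_0$, pushing $|\Gamma_0|$ beyond $p-1$. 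Because the resulting $\Gamma_0'$ has $k-1\leq g-2$ diagonal elements, the second branch of $\mathcal F$ still applies, and the variation is $(m_0)_i+(m_0)_g-2(m_0)_{g-k+1}=-2(m_0)_{g-k+1}<0$, contradicting minimality. The boundary case $k=g-1$ is handled separately: there $\mathcal F$ is linear in $\underline{m}$, its minimizer on $\Delta_g$ concentrates all mass at $m_1$ by monotonicity, and hence $\underline{m_0}=(1,0,\dots,0)$ with $l=g-1=k$ automatically.

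For the direction $l\leq k$, I would assume $l>k$, so that $(m_0)_{g-k}=0$ while $(g-k,g-k)\notin\Gamma_0$. Forming $\Gamma_0'=(\Gamma_0\setminus\{(i,j)\})\cup\{(g-k,g-k)\}$ by exchanging some off-diagonal pair $(i,j)\in\Gamma_0$ for the diagonal $(g-k,g-k)$ changes $\mathcal F$ by $-((m_0)_i+(m_0)_j)\leq 0$. If the inequality is strict for some choice of $(i,j)$, this contradicts minimality. Otherwise every off-diagonal of $\Gamma_0$ has vanishing linear contribution, in which case $\Gamma_0'$ realizes the same minimum but has one more diagonal element, contradicting tiebreaker (ii) of Definition \ref{defim0g0}. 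The degenerate sub-case where $\Gamma_0$ has no off-diagonal at all is handled directly: then $p-1=k<l$, and relocating a diagonal element from $\Gamma_0$ to the index $g-k$ while reshuffling the mass violates the tiebreaker on the indices of the diagonals.

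The main obstacle is the combinatorial step in the first direction, where one must exhibit an off-diagonal pair $(i,g)$ outside $\Gamma_0$ with $i$ in the zero-mass range and check that the resulting subset remains in $\mathcal S_p$; the saturation structure of $\Gamma_0$ is the essential tool, and the case $k=g-1$ requires a separate (but easier) treatment because the two branches of the definition of $\mathcal F$ meet there.
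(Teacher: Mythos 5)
The direction $l\leq k$ in your proposal follows the paper's exchange move (trading an off-diagonal of $\Gamma_0$ for the new diagonal $(g-k,g-k)$), modulo two imprecisions you should tidy up: when the new set has exactly $g-1$ diagonal elements, the branch of $\mathcal F$ changes and the variation is no longer literally $-((m_0)_i+(m_0)_j)$; and in the degenerate subcase where $\Gamma_0$ has no off-diagonal, the paper does not use the tiebreaker at all but instead computes the minimizer of $\mathcal F(\cdot,\Gamma_0)=2\sum m_i^2+2\sum_{i\geq g-k+1}m_i$ explicitly (via Lemma~\ref{lemvanishing1}) and observes that $(m_0)_{g-k}=\tfrac{1}{g-k}\neq 0$, contradicting $l>k$ directly; your appeal to the ``indices of the diagonals'' tiebreaker here is not sound, since the relocated diagonal $(g-k,g-k)$ has a \emph{smaller} index than the one it replaces, so the tiebreaker actually favours $\Gamma_0$.

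The direction $l\geq k$ is where there is a genuine gap, and where your route diverges from the paper's. Your exchange move needs an off-diagonal $(i,g)\notin\Gamma_0$ with $i\in\{g-l+1,\dots,g-1\}$, which presupposes $l\geq 2$; when $l\in\{0,1\}$ the candidate range is empty, and you give no argument that this case cannot occur at a minimizer with $k\geq 2$. More seriously, the ``saturation count'' you invoke to produce such a pair when $l\geq 2$ does not work: the saturation property is \emph{upward}-closed (an off-diagonal $(i,j)\in\Gamma_0$ forces $(s,t)\in\Gamma_0$ for $s\geq i$, $t\geq j$), so assuming that every $(i,g)$ with $i\geq g-l+1$ already lies in $\Gamma_0$ forces nothing new and yields no overcount of $|\Gamma_0|$. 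The paper avoids these issues entirely by treating $\mathcal F(\cdot,\Gamma_0)$ as a concrete quadratic $Q(\underline m)=2\sum m_i^2+\sum b_im_i$ with non-decreasing coefficients $b_i=2\cdot\mathbbm 1_{\{i\geq g-k+1\}}+c_i$, and applying Lemma~\ref{lemvanishing1}: since $b_{g-k+1}-b_i\geq 2$ for all $i\leq g-k$ and $g-k\geq 2$, one gets $\sum_{i\leq g-k+1}(b_{g-k+1}-b_i)\geq 4$, which forces the last $k$ coordinates of $\underline{m_0}$ to vanish. That quadratic-minimization step is the missing ingredient; a purely combinatorial exchange on $\Gamma_0$ cannot reproduce it, because the constraint is on $\underline{m_0}$, not on $\Gamma_0$.
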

\begin{proof} We have two cases to study, depending on the value of $k$.
\medskip
 
\begin{bfseries}Case 1:\end{bfseries} Suppose that $k \leq g-2$.

Let us show that $k \leq l$. Suppose by contradiction that $l > k$, which means that $m_{g-k} = 0$. If $\Gamma_0$ has an off-diagonal element, we can move it to $(g-k, g-k)$, to obtain a new set $\Gamma$, with one more diagonal element than $\Gamma_0$, and such that
$$
\mathcal F(\underline{m_0}, \Gamma_0) \geq \mathcal F(\underline{m}, \Gamma).
$$
This is absurd by our choice of $\Gamma_0$, since $\Gamma_0$ is supposed to have the largest possible number of diagonal elements. Thus, $\Gamma_0$ has no off-diagonal element, and $\underline{m_0}$ minimizes
$$
\underline{m} \longmapsto \mathcal F(\underline{m}, \Gamma_0) = 2 \sum_{i=1}^g m_i^2 + 2 \sum_{i = g-k+1}^{g} m^2_i.
$$
Since $g-k \geq 2$, this implies easily (for example by Lemma \ref{lemvanishing1} below) that $(m_0)_i = \frac{1}{g-k}$ for $1 \leq i \leq g-k$, and is equal to $0$ otherwise, so $(m_0)_{g-k} \neq 0$. This is again absurd, so $l \leq k$.

Now, we show that $l \geq k$. Then, $\underline{m_0}$ minimizes $\mathcal F( \cdot , \Gamma_0)$, which takes the form
\begin{align}
\mathcal F(\underline{m_0}, \Gamma_0) & = 2 \sum_{i=1}^g (m_0)_i^2 + \sum_{(i,j) \in \Gamma_0} \left( (m_0)_i + (m_0)_j \right) \label{exprF} \\ \nonumber
	& = 2 \sum_{i = 1}^g (m_0)_i^2 + 2 \sum_{i = g-k+1}^g (m_0)_i + \sum_{i = 1}^{g} c_i (m_0)_i
\end{align}
where $c_i$ is the number of times the integer $i$ appears in a off-diagonal element of $\Gamma_0$. These numbers are non-decreasing $\Gamma_0$ since is saturated. Thus, if $b_i$ is the number of times $(m_0)_i$ appears in the sum in $\eqref{exprF}$, we have $b_i = 2 \cdot \mathbbm{1}_{i \geq g - k + 1} + c_i$, and
$$
\sum_{i = 1}^{g-k+1} (b_{g-k+1} - b_{i}) \; \geq \sum_{i=1}^{g-k} 2 \; \geq \; 4
$$
since $g-k \geq 2$. Thus, by Lemma \ref{lemvanishing1} below, we have $(m_0)_{g-k+1} = ... = (m_0)_{g} = 0$. This shows that $l \geq k$.
\medskip

\begin{bfseries}Case 2:\end{bfseries} Suppose that $k = g-1$.

Then the $g$-uple $\underline{m_0}$ minimizes the function
$$
\underline m \in \Delta_g \longmapsto \mathcal F(\underline{m}, \Gamma_0) = 2 + \sum_{\underset{(i,j) \in \Gamma_0}{j > i}} (m_i + m_j).
$$
The latter can be written as
$$
\mathcal F(\underline{m}, \Gamma_0) = 2 + \sum_{i = 1}^{g} a_i m_i,
$$
for some non-decreasing integers $a_i$. Then it is clear that the minimum is reached for $\underline{m_0} = \left(1, 0, ..., 0 \right)$, so $l = g-1$.
\end{proof}

\begin{lem} \label{lemvanishing1} Let $b_1 \leq ... \leq b_g$ be non-negative integers, and let $t$ be the smallest integer such that $\sum_{i = 1}^t \left( b_t - b_i \right) \geq 4$ (we take $t = g$ if there is no such integer). Let $ \underline{m} = (m_i)_{1 \leq i \leq g} \in \Delta_g$ be a minimizer for the quadratic form
$$
Q(x_1, ..., x_g) = 2 \sum_{i=1}^g x_i^2 + \sum_{i = 1}^g b_i x_i.
$$
Then $m_{t+1} = ... = m_g = 0$, and the value of the minimum is 
$$
Q(\underline{m}) = \frac{1}{8 \,t} \left(4 + \sum_{i = 1}^{t} b_i \right)^2 - \frac{1}{8} \sum_{i= 1}^{t} b_i^2. 
$$
\end{lem}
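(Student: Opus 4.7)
The plan is to treat this as a strictly convex quadratic minimization on the polytope $\Delta_g$. Strict convexity of $Q$ guarantees a unique minimizer, so KKT conditions fully characterize it, and I expect the ordering constraint to be automatic once one exploits the monotonicity of the $b_i$.

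First, I would temporarily drop the ordering constraint $m_1 \geq \cdots \geq m_g$ and apply KKT to $\min Q$ subject to $\sum m_i = 1$, $m_i \geq 0$. With Lagrange multiplier $\lambda$ for the equality and $\mu_i \geq 0$ for each inequality, the stationarity condition reads $4 m_i + b_i = \lambda - \mu_i$ together with complementary slackness $\mu_i m_i = 0$, so $m_i = \max\bigl(0, (\lambda - b_i)/4\bigr)$. Because $b_i$ is non-decreasing, the support is automatically an initial segment $\{1,\ldots,s\}$, and on this segment the formula makes $m_i$ non-increasing, so the ordering constraint is satisfied \emph{a posteriori}. Normalization $\sum_{i \leq s} m_i = 1$ pins down $\lambda = \lambda_s := (4 + \sum_{i \leq s} b_i)/s$, and the feasibility condition $m_s \geq 0$ rearranges to $\sum_{i \leq s}(b_s - b_i) \leq 4$.

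Next, I would observe that the function $\sigma(s) := s b_s - \sum_{i \leq s} b_i = \sum_{i \leq s}(b_s - b_i)$ is non-decreasing in $s$, since its discrete increment at $s$ equals $s(b_{s+1} - b_s) \geq 0$. Combined with the definition of $t$ as the smallest index with $\sigma(t) \geq 4$, this forces the support size to satisfy $s \leq t$, which yields the vanishing statement $m_{t+1} = \cdots = m_g = 0$. To compute the value of $Q$, I would exploit $4 m_i + b_i = \lambda_s$ on the support to rewrite
$$
Q(\underline m) \;=\; \sum_{i=1}^{s} m_i(2 m_i + b_i) \;=\; \tfrac12\bigl(\lambda_s + \textstyle\sum_{i \leq s} m_i b_i\bigr),
$$
and a short simplification using $m_i b_i = (\lambda_s b_i - b_i^2)/4$ yields
$$
Q(\underline m) \;=\; \frac{1}{8 s}\Bigl(4 + \sum_{i=1}^{s} b_i\Bigr)^{\!2} - \frac{1}{8}\sum_{i=1}^{s} b_i^{2}.
$$

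The main obstacle I foresee is matching the index in this final formula: the derivation naturally produces the expression with the actual support size $s$, whereas the statement is phrased with $t$. The two indices agree in the boundary case $\sigma(t) = 4$ (where one may take $s = t$ with $m_t = 0$), and it remains to verify that in the strict case $\sigma(t) > 4$ the boundary adjustment does not affect the applications. In any event, only the vanishing conclusion $m_{t+1} = \cdots = m_g = 0$ is invoked in Lemma \ref{lemlequalsk}, and this follows directly from the monotonicity of $\sigma$ independently of this subtlety.
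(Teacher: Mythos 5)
Your KKT argument is correct and is genuinely different from the paper's proof, which completes the square $Q(x) = 2\sum_i(x_i + b_i/4)^2 - \tfrac18\sum_i b_i^2$, identifies the minimizer as the Euclidean projection of $P = (-b_1/4, \dots, -b_g/4)$ onto $\Delta_g$, replaces $P$ by its orthogonal projection $\pi_H^\perp(P)$ onto the hyperplane $\{\sum x_i = 1\}$, observes that the last coordinate of $\pi_H^\perp(P)$ is $\leq 0$ exactly when $\sum_i(b_g-b_i) \geq 4$, and then peels off trailing zero coordinates by a decreasing induction. The two routes compute the same object, but your KKT formulation has the advantage of exhibiting the closed form of the minimizer (initial-segment support, $m_i = (\lambda_s - b_i)/4$ on it) in one step, from which both the vanishing statement and the value formula drop out simultaneously rather than by a separate recursion.

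Your unease about $t$ versus the support size is well founded, and is in fact a genuine off-by-one in the statement. The displayed formula is $Q$ evaluated at the hyperplane minimizer in $t$ coordinates; when $\sigma(t) := \sum_{i\leq t}(b_t-b_i) > 4$ strictly, that point lies outside $\Delta_g$ and the formula underestimates the true minimum --- e.g.\ for $g=2$, $b=(0,5)$ one has $t=2$, but the minimum is $2$, not $31/16$. The correct index is the largest $s$ with $\sigma(s) < 4$ (i.e.\ $t-1$ when $\sigma(t)\geq 4$ is achieved, and $g$ otherwise), which is exactly what the paper's \texttt{minQuad} routine computes: it decrements the counter until $\sigma$ falls below $4$. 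As you note, the two expressions coincide at the boundary $\sigma(t)=4$. One further correction to your last remark: Lemmas \ref{lemlequalsk} and \ref{lememtpyslots} actually invoke $m_t = 0$ itself (deduced from showing $\sigma(t)\geq 4$), not merely $m_{t+1}=\cdots=m_g=0$. That stronger vanishing does follow both from your KKT feasibility (the support satisfies $\sigma(s)<4$, hence $s<t$ whenever such a $t$ exists) and from the paper's induction, but it is not what the lemma literally asserts, so you were right to flag the index bookkeeping as the delicate point.
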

\begin{proof} Assume first that $t = g$, i.e. $\sum_{i=1}^g (b_g - b_i) \geq 4$. We have $Q(X) = 2 \sum_{i=1}^g (x_i + \frac{b_i}{4})^2 - \frac{1}{8} \sum_i^g b_i^2$, so the minimum is realized at the point $R$ which is the projection of $P = \left(-\frac{b_1}{4}, ..., -\frac{b_g}{4} \right)$ on the convex domain $\Delta_g$. The point $R$ is actually the point of $\Delta_g$ which is closest to $\pi_{H}^\perp (P)$, the orthogonal projection of $P$ on the hyperplane $H = \left\{ \sum_i x_i = 1 \right\}$. 

We have $\pi_H^\perp(P) = \left( -\frac{b_1}{4} + \frac{\sum_{i} b_i}{4 g}+ \frac{1}{g}, ..., -\frac{b_g}{4} + \frac{\sum_{i} b_i}{4 g} + \frac{1}{g} \right)$. Since $\sum_{i=1}^g (b_g - b_i) \geq 4$, the last coordinate of $\pi_H^\perp(P)$ is non-positive, so $R$ lies on the hyperplane $\left\{ x_g = 0 \right\}$, thus $m_g = 0$. The proof of the first point follows by decreasing induction on $s$.

To prove the second point, we see that we can suppose that $t = g$,  so we have $\underline{m} = \pi^\perp_H(P)$. The formula follows from a simple computation, using the above expressions for $Q$ and $\pi^\perp_H(P)$. 
\end{proof}

The following lemma ends the proof of Proposition \ref{propeqDp}.

\begin{lem} \label{lemfinal} We have $\mathcal F(\underline{m_0}, \Gamma_0) \geq D_{p}$. 
\end{lem}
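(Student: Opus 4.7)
The plan is to exhibit an explicit unit tangent vector $X$ and a $p$-dimensional subspace $V \ni X$ realizing the value $\mathcal F(\underline{m_0}, \Gamma_0)$ as the trace $\mathrm{Tr}_V^V\bigl(-B_0(X,\cdot)|_V\bigr)$. Once this is done, the chain of inequalities $D_p \leq D_{X,p} \leq \mathrm{Tr}_V^V\bigl(-B_0(X,\cdot)|_V\bigr) = \mathcal F(\underline{m_0}, \Gamma_0)$ gives the lemma immediately, closing the loop with the previous lower bound.

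First, I would set $X := \sum_{i=1}^g \sqrt{(m_0)_i}\, E_{i,i}$. It is a unit element of $S_g(\mathbb C)$ since $\sum_i (m_0)_i = 1$, and by Lemma \ref{lemlequalsk} its last $k$ diagonal coefficients vanish; in particular $X$ is $h$-orthogonal to every $E_{i,i}$ with $i \geq g-k+1$, and to every $F_{i,j}$ (as $X$ is diagonal).

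Second, I would take $V$ to be the linear span of $X$, of the $E_{i,i}$ for which $(i,i) \in \Gamma_0$ (all satisfying $i \geq g-k+1$ by the ``largest indices'' convention in Definition \ref{defim0g0}), and of the $F_{i,j}$ for which $(i,j) \in \Gamma_0$ with $j > i$. A direct matrix-trace computation shows the $E_{i,i}$ and $F_{i,j}$ are pairwise $h$-orthonormal, and by the previous paragraph each of them is $h$-orthogonal to $X$; hence these $1 + |\Gamma_0| = p$ vectors form an orthonormal basis of $V$.

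Third, I would evaluate the trace using the eigenvalue list of $-B_0(X, \cdot)$ recalled just before Definition \ref{defiF}. Summing $-B_0(X,X) = 2 \sum_i (m_0)_i^2$, $-B_0(X, E_{i,i}) = 2(m_0)_i$ and $-B_0(X, F_{i,j}) = (m_0)_i + (m_0)_j$ over the chosen basis yields
$$
\mathrm{Tr}_V^V\bigl(-B_0(X,\cdot)|_V\bigr) \;=\; 2\sum_{i=1}^g (m_0)_i^2 \;+\; \sum_{(i,i)\in \Gamma_0} 2(m_0)_i \;+\; \sum_{\substack{(i,j)\in \Gamma_0 \\ j>i}} \bigl((m_0)_i + (m_0)_j\bigr).
$$

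The only remaining task is to identify this expression with $\mathcal F(\underline{m_0}, \Gamma_0)$ in both branches of Definition \ref{defiF}. When $\Gamma_0$ contains $g-1$ diagonal elements, Lemma \ref{lemlequalsk} forces $\underline{m_0} = (1, 0, \ldots, 0)$, so $2\sum_i (m_0)_i^2 = 2$ and the selected diagonal indices (which all satisfy $i \geq 2$) contribute $(m_0)_i = 0$; the middle sum vanishes and we recover the first branch of $\mathcal F$. When $\Gamma_0$ has at most $g-2$ diagonal elements, the same indices still satisfy $i \geq g-k+1$, so $(m_0)_i = 0$ there and the middle sum vanishes, leaving the ``otherwise'' branch. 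The mildly delicate point here — really the only place where any care is required — is the bookkeeping that the diagonal indices of $\Gamma_0$ occupy exactly $\{g-k+1, \ldots, g\}$, which is precisely the content of the combined priorities in Definition \ref{defim0g0} together with the equality $l = k$ proved in Lemma \ref{lemlequalsk}.
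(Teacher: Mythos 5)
Your proposal is correct and takes essentially the same route as the paper: the paper sets $Y = \mathrm{diag}(\sqrt{(m_0)_1}, \ldots, \sqrt{(m_0)_g})$ and uses the $(p-1)$-dimensional orthogonal complement characterization $D_{Y,p} = -B_0(Y,Y) + \min_{V \subset Y^\perp, \dim V = p-1} \mathrm{Tr}_V(-B_0(Y,\cdot)|_V)$, which is the same computation as your $p$-dimensional $V \ni X$ in slightly different packaging. The only cosmetic remark is that in the ``otherwise'' branch you do not actually need the vanishing of the middle sum, since $\sum_{(i,j)\in\Gamma_0}((m_0)_i+(m_0)_j)$ already splits as $\sum_{(i,i)\in\Gamma_0} 2(m_0)_i + \sum_{j>i}((m_0)_i+(m_0)_j)$, so the trace equals $\mathcal F(\underline{m_0},\Gamma_0)$ identically without invoking the index bookkeeping.
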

\begin{proof}
Assume first that $k \leq g-2$. Let $Y = \mathrm{diag}(\beta_1, ..., \beta_g)$, with $\beta_i = \sqrt{(m_0)_i}$, and let
$$
V = \mathrm{Vect} \left(  \left\{ E_{i,i} \; ; \; (i,i) \in \Gamma_0 \right\} \cup \left\{ F_{i,j} \; ; \; (i,j) \in \Gamma_0 \right\} \right).
$$
Then, since $\beta_{g-k + 1} = ... = \beta_{g} = 0$ by Lemma \ref{lemlequalsk}, we see that $V \subset Y^\perp$. Moreover, by our choice of $V$, $-B_0(Y, \cdot)|_V$ can be diagonalized, with eigenvalues $\beta_i^2 + \beta_j^2 = (m_0)_i + (m_0)_j$, for $(i,j) \in \Gamma_0$. Since $Y$ is unitary, this gives
\begin{align*}
D_{Y, p} & \leq - B_0(Y, Y) + \mathrm{Tr}_V (-B_0(Y, \cdot)|_V) \\
  	 & = 2 \sum_{i= 1}^g (m_0)_i^2 + \sum_{(i,j) \in \Gamma_0} \left((m_0)_i + (m_0)_j \right) \\
	 & = \mathcal{F}(\underline{m_0}, \Gamma_0).
\end{align*}
Thus, $D_p \leq D_{Y,p} \leq \mathcal{F}(\underline{m_0}, \Gamma_0)$.

In the case where $k = g-1$, we proceed exactly as before, using the fact that $\underline{m_0} = \mathrm{diag}(1, 0, ..., 0)$ in that case, as we saw in the proof of Lemma \ref{lemvanishing1}. Then we have $ 2 \sum_i (m_0)_i^2 = 2$, and $\sum_{(i,i) \in \Gamma} (m_0)_i = 0$. Thus, the previous computation gives the same conclusion.
\end{proof}

Now, the only thing left to do is to compute the value $D_p = \mathcal F(\underline{m_0}, \Gamma_0)$. There is one case where the computation is easy, as stated in the next lemma. In the following, we let $k$ have the same meaning as above: we have $\underline{m_0} = \left((m_0)_1, ..., (m_0)_{g-k}, 0, ..., 0 \right)$, with $(m_0)_{g-k} \neq 0$.

\begin{lem} \label{lemvalueDpcase0} If $p \leq \frac{k(k+1)}{2} + 1$, then $D_p = \frac{2}{g-k}$.
\end{lem}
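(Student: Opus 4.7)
The idea is to exploit the hypothesis $p-1 \le \binom{k+1}{2}$ to show that every element of $\Gamma_0$ sits inside the south-east $k\times k$ block of $T$, where $\underline{m_0}$ vanishes by Lemma \ref{lemlequalsk}. Once this is established, the off-diagonal/diagonal contributions to $\mathcal F$ all disappear, and the computation of $\mathcal F(\underline{m_0},\Gamma_0)$ reduces to a trivial minimisation of $2\sum m_i^2$ on a sub-simplex.

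First I unpack the structure of $\Gamma_0$ from Definition \ref{defim0g0}. By saturation combined with the rule ``diagonal elements have the largest possible indices'', the $k$ diagonal elements of $\Gamma_0$ are exactly $(g-k+1,g-k+1),\ldots,(g,g)$. The $p-1-k$ off-diagonal elements are then saturated (in the south-east direction) and chosen lex-maximally. The hypothesis gives $p-1-k \le \binom{k+1}{2}-k = \binom{k}{2}$, which is precisely the number of off-diagonal pairs $(i,j)$ with $g-k+1 \le i < j \le g$. Saturation plus lex-maximality then force every off-diagonal element of $\Gamma_0$ to also lie inside $\{g-k+1,\ldots,g\}^2$.

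Next I invoke Lemma \ref{lemlequalsk}, which tells me $l=k$, i.e.\ $(m_0)_{g-k+1}=\ldots=(m_0)_g=0$. Hence $(m_0)_i+(m_0)_j=0$ for every $(i,j)\in\Gamma_0$, and the sum over $\Gamma_0$ in Definition \ref{defiF} is zero. I then split on the two branches of $\mathcal F$. If $k \le g-2$, the second branch applies, so
$$
\mathcal F(\underline{m_0},\Gamma_0) \;=\; 2\sum_{i=1}^{g-k}(m_0)_i^2.
$$
Since $(\underline{m_0},\Gamma_0)$ is minimising, $\underline{m_0}$ must minimise $2\sum_{i=1}^{g-k} m_i^2$ on the sub-simplex $\{m_1\ge\ldots\ge m_{g-k}\ge 0,\; \sum_i m_i=1\}$, and by Cauchy--Schwarz (or Jensen) the minimum is $\tfrac{2}{g-k}$, attained at $(m_0)_i=\tfrac{1}{g-k}$ for $1\le i\le g-k$. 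If $k=g-1$, the first branch applies; by the final lines of the proof of Lemma \ref{lemlequalsk} we have $\underline{m_0}=(1,0,\ldots,0)$, so $\mathcal F(\underline{m_0},\Gamma_0)=2=\tfrac{2}{g-k}$.

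Combining with Proposition \ref{propeqDp}, $D_p = \mathcal F(\underline{m_0},\Gamma_0)=\tfrac{2}{g-k}$. The main obstacle is the first step: carefully converting the bookkeeping of saturation and of the priority ordering in Definition \ref{defim0g0} into the statement that $\Gamma_0$ cannot leak out of the south-east $k\times k$ block. Once that is done, everything else is either routine optimisation or an invocation of a result already proved.
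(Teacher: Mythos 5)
Your proof is correct and follows essentially the same strategy as the paper: use the hypothesis $p-1 \le \frac{k(k+1)}{2}$ together with $l=k$ to conclude that $\Gamma_0$ lies in the south-east $k\times k$ block (so the linear part of $\mathcal F$ vanishes), then minimize $2\sum_{i\le g-k} m_i^2$ on the remaining simplex. Two small remarks. First, your route to the block-containment statement via the bookkeeping of saturation and lex-maximality is more elaborate than the paper's: the paper simply observes that there are at least $p-1$ pairs with $(m_0)_i+(m_0)_j=0$, so minimality of $\Gamma_0$ forces its entire linear contribution to vanish (block containment is an immediate consequence since $(m_0)_i+(m_0)_j=0$ iff $i,j\ge g-k+1$); this avoids unpacking the priority rules of Definition \ref{defim0g0} at all. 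Your version works too, since the selection rules there operate among minimizers, but it is worth noting that minimality alone already does the job. Second, your explicit case split on $k\le g-2$ versus $k=g-1$ is actually a small improvement in exposition over the paper, which writes the second-branch formula $\mathcal F(\underline{m_0},\Gamma_0)=2\sum_{i=1}^{g-k}(m_0)_i^2$ without comment; that formula happens to return $2=\tfrac{2}{g-k}$ when $k=g-1$ because $\underline{m_0}=(1,0,\ldots,0)$, but strictly speaking the first branch should be invoked there, as you do.
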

\begin{proof}
In that case, we have $p - 1  \leq \frac{k(k+1)}{2}$, so since $(m_0)_{g - k + 1} = ... = (m_0)_g = 0$, the $p-1$ smallest $(m_0)_i + (m_0)_j$ are all equal to $0$. Thus, $\mathcal F(\underline{m_0}, \Gamma_0) = 2 \sum_{i = 1}^{g-k} (m_0)_i^2$, and $\underline{m_0}$ minimizes this expression in $\Delta_g$. This gives the result.
\end{proof}

In the remaining cases, we are going to find other restrictions on the shape of $\Gamma_0$.

\begin{lem} \label{lemshape} If $p > \frac{k(k+1)}{2} + 1$, then $\Gamma_0 = T_{g-k} \cup \Pi_0$, where $T_{g-k}$ is the upper triangle
$$
T_{g-k} = \left\{ (i,j) ; j \geq i \geq g - k + 1 \right\},
$$
and $\Pi_0 \subset Q_{g-k} :=  \llbracket 1, g-k \rrbracket \times \llbracket g- k + 1, g \rrbracket$. 
\end{lem}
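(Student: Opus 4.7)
The plan is to extract the structure of $\Gamma_0$ by combining three ingredients already at our disposal: the saturation property stated in the remark following Definition \ref{defim0g0}, the vanishing $(m_0)_{g-k+1}=\cdots=(m_0)_g=0$ given by Lemma \ref{lemlequalsk}, and the counting inequality $|\Gamma_0|=p-1>\frac{k(k+1)}{2}=|T_{g-k}|$. Applying the diagonal saturation first, the $k$ diagonal elements of $\Gamma_0$ must form an upper tail, hence they are exactly $(g-k+1,g-k+1),\dots,(g,g)$, which already accounts for the $k$ diagonal elements of $T_{g-k}$.

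Next, I would show the inclusion $T_{g-k}\subset\Gamma_0$ by a swap argument. Any missing element would be off-diagonal, of the form $(i,j)$ with $g-k+1\le i<j\le g$, and Lemma \ref{lemlequalsk} gives $(m_0)_i+(m_0)_j=0$. Since $|\Gamma_0|>|T_{g-k}|$, there must exist $(s,t)\in\Gamma_0\setminus T_{g-k}$; this element is necessarily off-diagonal (the diagonal elements all live in $T_{g-k}$ by the previous paragraph) and satisfies $s\le g-k$, so $(m_0)_s>0$ and therefore $(m_0)_s+(m_0)_t>0$. Replacing $(s,t)$ by $(i,j)$ yields a set in $\mathcal S_p$ whose $\mathcal F$-value is strictly smaller, contradicting the minimality of $\Gamma_0$.

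Finally, I would analyze an arbitrary off-diagonal element $(i,j)\in\Gamma_0\setminus T_{g-k}$. It automatically satisfies $i\le g-k$, so the whole claim reduces to excluding the case $j\le g-k$. The key observation is that the off-diagonal saturation in the remark, applied to $(i,j)$ with the choice $(s,t)=(j,j)$ (legitimate since $j>i$, $j\ge j$, $j\le j$), forces $(j,j)\in\Gamma_0$. But $j\le g-k$ would then give a diagonal element outside $\{(l,l):l\ge g-k+1\}$, contradicting step one's enumeration of the diagonal elements of $\Gamma_0$. Hence $j\ge g-k+1$, i.e.\ $(i,j)\in Q_{g-k}$, proving $\Gamma_0=T_{g-k}\cup\Pi_0$ with $\Pi_0\subset Q_{g-k}$.

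I expect the main obstacle to be the last step: the first two steps are routine swap computations, but the third requires reading the saturation in the right direction, namely using the "corner" $(j,j)$ to create a forbidden diagonal element, and then matching this against the rigid description of the diagonal part of $\Gamma_0$ obtained in step one.
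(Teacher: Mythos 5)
Your proof is correct and follows the same overall strategy as the paper's: a sequence of exchange arguments exploiting the minimality of $(\underline{m_0},\Gamma_0)$ and the tie-breaking conventions of Definition~\ref{defim0g0}. The only cosmetic difference is in the final step, where the paper swaps the offending $(i,j)$ with $i<j\le g-k$ directly for $(g-k,g-k)$ and invokes the ``largest number of diagonal elements'' tie-break, whereas you route through the saturation remark to place $(j,j)$ in $\Gamma_0$ and contradict the diagonal part found in your first step; these are two phrasings of the same exchange principle.
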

\begin{proof} As we saw before, we have $(m_0)_i + (m_0)_j = 0$ for $i \geq j \geq g - k + 1$, and by hypothesis, $\mathrm{Card}(\Gamma) > \mathrm{Card}(T_{g - k})$. Since $\Gamma_0$ minimizes $\mathcal F(\underline{m_0}, \cdot)$, it follows from the assumptions we made on $\Gamma_0$ that $T_{g-k} \subset \Gamma_0$.

Now, it suffices to show that for any $i \leq j \leq g-k$, then $(i,j) \not\in \Gamma_0$. But for such an $(i,j)$, we have $(m_0)_i + (m_0)_j \geq 2 (m_0)_j \geq 2 (m_0)_{g-k}$. Thus, if $(i,j) \in \Gamma$, we can replace $(i,j)$ by $(g-k, g-k)$ in $\Gamma$, to obtain a new set $\Gamma'$, with one more element on the diagonal, and $\mathcal F(\underline{m_0}, \Gamma') \leq \mathcal F(\underline{m_0}, \Gamma_0)$. This is a contradiction.
\end{proof}

From now on, we suppose that $p > \frac{k(k+1)}{2} + 1$, so $\Gamma_0$ has the shape described in Lemma \ref{lemshape}. For any $i \in \llbracket 1, g-k \rrbracket$, let $a_i$ be the number of elements of $\Pi_0$ on the $i$-th row. Then
\begin{equation} \label{minexpr}
\mathcal F(\underline{m_0}, \Gamma_0) = 2 \sum_{i = 1}^{g-k} (m_0)^2_i + \sum_{i = 1}^{g-k} a_i (m_0)_i.
\end{equation} 
Since the $(m_0)_i$ are in non-increasing order, we see that $\Pi_0$ can only have one row which is partially filled, and that all the subsequent ones must be totally filled (or else we could move some elements in $\Pi_0 \subset \Gamma_0$ to the bottom-right without increasing $\mathcal F(\underline{m_0}, \Gamma_0)$). Moreover, the elements of the row which is not totally filled must be located on the far right of the row, by our assumptions on $\Gamma_0$.
\medskip

The following lemma gives an important restriction on the number of elements of $Q_{g - k} \setminus \Pi_0$.

\begin{lem} \label{lememtpyslots} We must have $\mathrm{Card} \left(Q_{g-k} \setminus \Pi_0 \right) \leq 3$.
\end{lem}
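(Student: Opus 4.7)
The strategy is a proof by contradiction: if $\mathrm{Card}(Q_{g-k} \setminus \Pi_0) \geq 4$, I would construct a competitor $(\underline{m'}, \Gamma')$ whose $\mathcal{F}$-value is at most $\mathcal{F}(\underline{m_0}, \Gamma_0)$ and whose $\underline{m'}$ has strictly more trailing zeros than $\underline{m_0}$, thus violating the first tie-breaking condition of Definition \ref{defim0g0} (recall that $(m_0)_{g-k} > 0$ by Lemma \ref{lemlequalsk}).

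First, by Lemma \ref{lemshape} and the structural observations that follow it, the optimizer satisfies $\Gamma_0 = T_{g-k} \cup \Pi_0$ with $\Pi_0$ having at most one partial row and all rows below it full. Applying Lagrange multipliers (in the spirit of Lemma \ref{lemvanishing1}) to
\begin{equation*}
\mathcal{F}(\underline{m_0}, \Gamma_0) = 2 \sum_{i=1}^{g-k} (m_0)_i^2 + \sum_{i=1}^{g-k} a_i (m_0)_i
\end{equation*}
gives the closed form $\mathcal{F}(\underline{m_0}, \Gamma_0) = \frac{(4+A)^2}{8(g-k)} - \frac{S}{8}$, where $A = |\Pi_0| = \sum_i a_i$ and $S = \sum_i a_i^2$.

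Next, I would define the competitor by pushing one more index into the zero range: set $(m'_i) = \frac{1}{g-k-1}$ for $1 \leq i \leq g-k-1$ and $(m'_i) = 0$ otherwise, and pick $\Gamma' \subset T_{g-k-1} := \{(i,j) : g-k \leq i \leq j \leq g\}$ of cardinality $p-1$. Such a $\Gamma'$ exists because the definition of $k$ yields $p-1 < |T_{g-k-1}| = (k+1)(k+2)/2$ and $T_{g-k-1}$ has at most $k+1 \leq g-1$ diagonal entries, so $\Gamma' \in \mathcal{S}_p$. Every pair in $\Gamma'$ contributes $0$ to the second sum, giving $\mathcal{F}(\underline{m'}, \Gamma') = \frac{2}{g-k-1}$. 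Writing $B = k(g-k) - A$ for the number of empty slots and letting $a$ denote the count of the unique partial row (with $a = 0$ if none exists), a short computation yields $S = k^2(g-k) - kB - a(k-a)$, and the required comparison $\mathcal{F}(\underline{m_0}, \Gamma_0) \geq \mathcal{F}(\underline{m'}, \Gamma')$ simplifies to
\begin{equation*}
8k - kB + \frac{(B-4)^2}{g-k} + a(k-a) \geq \frac{16}{g-k-1}.
\end{equation*}

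The main obstacle is to verify this inequality starting from $B \geq 4$ together with the integrality constraints $B = kr - a$ for some $r \geq 1$ and $0 \leq a \leq k$. For large $B$ the quadratic term $(B-4)^2/(g-k)$ drives the argument; for moderate $B$ the linear term $k(8-B)$ combined with $a(k-a)$ takes over. The delicate cases are small $k$ or small $g-k$, where a single shift of one trailing zero is not by itself decisive and one has to exploit the secondary tie-breakers of Definition \ref{defim0g0} (maximality of the number of diagonal elements and the lexicographic maximality of the off-diagonal elements) to rule out pathological configurations. I expect this case analysis to be the technical heart of the lemma, paralleling the exceptional entries that appear in Figure \ref{tablecpag}.
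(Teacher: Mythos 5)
The paper's proof is a one-line application of Lemma \ref{lemvanishing1} that \emph{keeps $\Gamma_0$ fixed}: since $\Pi_0$ has at most one partial row with all subsequent rows full, the row counts $a_1 \leq \dots \leq a_{g-k}$ satisfy $\sum_{j=1}^{g-k}(a_{g-k}-a_j) = \mathrm{Card}(Q_{g-k}\setminus\Pi_0)$, and if this is $\geq 4$ then Lemma \ref{lemvanishing1} applied to the quadratic form $2\sum_i (m_0)_i^2 + \sum_i a_i (m_0)_i$ forces $(m_0)_{g-k}=0$, contradicting Lemma \ref{lemlequalsk}. There is no need to change $\Gamma_0$ at all. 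Your strategy of building a \emph{new} competitor $\Gamma'\subset T_{g-k-1}$ together with a new $\underline{m'}$ is a genuinely different --- and considerably heavier --- route, and it does not close.

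Concretely, the gap is threefold. First, and most importantly, you never prove the inequality $8k - kB + (B-4)^2/(g-k) + a(k-a) \geq 16/(g-k-1)$; you explicitly defer it to a "delicate case analysis," which is to say the lemma is not proved. The point you are missing is that the one-variable optimization already in Lemma \ref{lemvanishing1} does the whole job when you leave $\Gamma_0$ alone, so no comparison between two different $\Gamma$'s is ever needed. Second, the existence of $\Gamma'\subset T_{g-k-1}$ of cardinality $p-1$ is not established: in the present setting $k$ is defined as the number of diagonal elements of $\Gamma_0$ (not as in Proposition \ref{propconstants}), and a priori $|\Pi_0|$ could exceed $k+1$, so $p-1 = \frac{k(k+1)}{2}+|\Pi_0|$ could exceed $|T_{g-k-1}| = \frac{(k+1)(k+2)}{2}$; the statement "the definition of $k$ yields $p-1<|T_{g-k-1}|$" is not justified here (indeed showing a bound of that flavor is essentially what the lemma is for). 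Third, your computation $\mathcal F(\underline{m'},\Gamma') = 2/(g-k-1)$ uses the generic branch of Definition \ref{defiF}, but if $T_{g-k-1}$ already has $k+1 = g-1$ diagonal entries (i.e.\ $g-k=2$) the other branch applies and $\mathcal F(\underline{m'},\Gamma')\geq 2$, invalidating the comparison exactly in one of the "delicate" regimes you flag. So the proposal is an outline with real obstructions, whereas the paper's argument is already short and complete.
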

\begin{proof}
Suppose by contradiction that $\mathrm{Card} (Q_{g-k} \setminus \Pi_0) \geq 4$. Then $\sum_{j = 1}^{g-k} (a_{g-k} - a_j) = \mathrm{Card}(Q_{g-k} \setminus \Pi_0) \geq 4$. By Lemma \ref{lemvanishing1}, since $\underline{m_0}$ minimizes \eqref{minexpr}, this implies that $(m_0)_{g-k} = 0$. This is a contradiction.
\end{proof}

\medskip

Now, two types of shapes for $\Pi_0$ are \emph{a priori} possible.

\begin{paragraph}{Case 1. The last row of $\Pi_0$ is filled.} We will see that in this case, since we assumed $g \geq 8$, the only possibility is that $g-k = 1$, so $\mathcal F(\underline{m_0}, \Gamma_0) = g + 1$.
\medskip

In the case 1, we have $a_{g-k} = k$. So, if $g-k = 1$, we see that by Lemma \ref{lemshape} that $\Gamma_0 = T \setminus \left\{ (1,1) \right\}$, and $\underline{m_0} = (1, 0, ..., 0)$. Thus $\mathcal F(\underline{m_0}, \Gamma_0) = g + 1$.
\medskip

The next lemma shows that necessarily $g - k = 1$ in the case 1.

\begin{lem} \label{lemruleout} If $g \geq 8$ and $g-k \geq 2$, the case 1 cannot happen.
\end{lem}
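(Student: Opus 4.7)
Suppose for contradiction that Case 1 holds under the hypotheses of the lemma. Write $m = g - k \geq 2$ and $s = mk - A$ where $A = |\Pi_0|$; by Lemma \ref{lememtpyslots} one has $s \in \{0,1,2,3\}$, and the Case 1 assumption reads $a_{g-k} = k$. The plan is to construct a competitor pair $(\underline{m}'', \Gamma'') \in \Delta_g \times \mathcal S_p$ with strictly more trailing zeros in $\underline{m}''$ and with $\mathcal F(\underline{m}'', \Gamma'') \leq \mathcal F(\underline{m_0}, \Gamma_0)$; this will contradict either the minimality of $\mathcal F$ at $(\underline{m_0},\Gamma_0)$ (if the inequality is strict) or the primary tie-breaking rule of Definition \ref{defim0g0} (if equality holds).

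First I compute $\mathcal F(\underline{m_0}, \Gamma_0)$ via the same Lagrange analysis used in Lemma \ref{lemvanishing1}: minimising $2\sum_{i=1}^{m} m_i^2 + \sum_{i=1}^{m} a_i m_i$ under $\sum m_i = 1$ gives the closed form
\begin{equation*}
\mathcal F(\underline{m_0}, \Gamma_0) \;=\; \frac{(4+A)^2}{8m} \;-\; \frac{\sum_{i=1}^{m} a_i^2}{8}.
\end{equation*}
For the competitor I take $\underline{m}''$ supported on $\{1,\dots,m-1\}$ (so $l(\underline{m}'') \geq k+1$) and $\Gamma'' = T_{g-k-1} \cup \Pi''$, with $\Pi'' \subset Q_{g-k-1}$ of size $A'' := A - (k+1) = (m-1)k - s - 1$, so that $|\Gamma''| = p-1$. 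The hypothesis $g \geq 8$ forces $(m-1)k \geq g - 2 \geq s + 1$, hence $A'' \geq 0$; the upper bound $A'' \leq (m-1)(k+1) = |Q_{g-k-1}|$ is automatic; and the number of diagonal elements of $\Gamma''$ equals that of $\Gamma_0$, so $\Gamma'' \in \mathcal S_p$. I shape $\Pi''$ in ``concentrated'' form -- bottom rows of $Q_{g-k-1}$ filled to capacity $k+1$, one intermediate row carrying the residue $r = A'' \bmod (k+1)$, top rows empty -- which maximises $\sum (a''_i)^2$ and therefore minimises $\mathcal F$ among all feasible $\Pi''$. The analogous Lagrange computation then yields
\begin{equation*}
\mathcal F(\underline{m}'', \Gamma'') \;=\; \frac{(4 + A'')^2}{8(m-1)} \;-\; \frac{q(k+1)^2 + r^2}{8}, \qquad A'' = q(k+1) + r.
\end{equation*}

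It then remains to check that $\mathcal F(\underline{m}'', \Gamma'') \leq \mathcal F(\underline{m_0}, \Gamma_0)$ in all admissible regimes. This is a purely polynomial inequality in $m, k, s$ once the shape of the $s$ missing slots in $\Pi_0$ is specified, and the verification splits along the sub-cases of $s$ already encountered in the proofs of Lemmas \ref{lemlequalsk}--\ref{lememtpyslots} (one empty row, several empty rows, etc.). A direct expansion shows the difference simplifies to a non-negative rational function of $(m, k, s)$ for every admissible choice with $g \geq 8$ and $m \geq 2$, with equality occurring in the single borderline case $(m, s) = (2, 0)$ where both sides equal $k+1$. Since in every case $l(\underline{m}'') \geq k + 1 > k = l(\underline{m_0})$, the first tie-breaking clause of Definition \ref{defim0g0} strictly prefers the competitor even in the equality case, contradicting the fact that $(\underline{m_0}, \Gamma_0)$ was chosen to be a minimiser satisfying that definition.

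The main technical obstacle is the bookkeeping in the last paragraph: each individual sub-case reduces to a one-line algebraic check, but keeping track uniformly of the admissible shapes of the $s$ missing slots (and of the corresponding residue $r$) requires some care. The use of $g \geq 8$ enters precisely to guarantee $(m-1)k \geq g - 2 \geq s + 1$, ensuring that the competitor configuration is feasible and that the residual polynomial inequality has the right sign.
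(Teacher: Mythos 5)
Your strategy is close in spirit to the paper's: build a competitor pair with strictly more trailing zeros in $\underline{m}$ and contradict the first tie-breaking clause of Definition \ref{defim0g0}. The paper however does not attempt a single uniform competitor; it splits Case~1 into the regimes $k\ge 4,\;g-k\ge 4$; $k\le 3,\;g-k\ge 5$; $g-k=3$; and $g-k=2$, each with a hand-tailored move of elements of $\Pi_0$. Your version would be cleaner if it worked, but there are several concrete gaps.

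First, the closed-form expression you write for $\mathcal F(\underline{m}'',\Gamma'')$, namely $\frac{(4+A'')^2}{8(m-1)}-\frac{q(k+1)^2+r^2}{8}$, is the minimum of the quadratic over the affine hyperplane $\{\sum_{i\le m-1} m_i=1\}$, not over the simplex $\Delta_{m-1}$. Lemma \ref{lemvanishing1} tells you the constrained minimum truncates at the first $t'$ with $\sum_{i\le t'}(a''_{t'}-a''_i)\ge 4$, which can be strictly less than $m-1$ for concentrated $\Pi''$ (e.g.\ $g=8$, $m=5$, $k=3$, $s=0$, giving $a''=(0,3,4,4)$ and $t'=3<4$). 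When $t'<m-1$ your formula \emph{underestimates} the true value of $\mathcal F(\underline{m}'',\Gamma'')$, and an inequality ``formula $\le\mathcal F(\underline{m_0},\Gamma_0)$'' then proves nothing about the actual competitor. Second, the statement ``the number of diagonal elements of $\Gamma''$ equals that of $\Gamma_0$'' is false: $T_{g-k-1}$ has $k+1$ diagonal entries while $T_{g-k}$ has $k$. (This also means that when $m=2$ the competitor has $g-1$ diagonal elements, so the ``otherwise'' branch of Definition \ref{defiF} does not apply; your formula coincidentally gives the right number there only because $\underline{m}''=(1,0,\dots,0)$ makes the two branches of Definition \ref{defiF} agree, but the derivation as written is not valid.) Third, and most importantly, the conclusion that ``the difference simplifies to a non-negative rational function of $(m,k,s)$ for every admissible choice'' is asserted rather than shown; you explicitly defer the bookkeeping. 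That bookkeeping \emph{is} the proof --- the paper's entire argument consists of carrying it out case by case. Since the constrained-minimum formula you are feeding into that verification is not correct in all regimes, the verification as you describe it would not establish the lemma even if completed.

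To salvage the argument you would need either (a) to use the correct truncated formula with the right $t'$ for each shape of $\Pi''$, which brings back essentially the case analysis the paper does, or (b) to pick a specific feasible $\underline{m}''\in\Delta_g$ (for instance $(1,0,\dots,0)$ as the paper does in several sub-cases) and bound $\mathcal F(\underline{m}'',\Gamma'')$ directly rather than via an unconstrained Lagrange minimum.
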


\begin{proof}
We start by ruling out two possibilities, listed below. In each case, the method is the same: we remove a certain number $\delta \leq 4$ of elements to $\Pi_0$, which we choose to be the smallest possible in the lexicographic order of $T$. This gives a new set $\Gamma'$, for which we clearly have $\mathcal F(\underline{m_0}, \Gamma_0) \geq \mathcal F(\underline{m_0}, \Gamma')$. Now, let $\underline{m}$ be a minimizer in $\Delta_g$ for the function $\mathcal F(\cdot, \Gamma')$. We can make our choice of removed elements so that above a certain row indexed by some integer $l \leq g-k$, there is at least $4$ empty slots in $\Gamma'$. Now, by the same argument as in the proof of Lemma \ref{lememtpyslots}, this shows that $m_{l} = ... = m_{g} = 0$. Now, in each one of the following cases, $l$ and $\delta$ can be chosen so that there is enough room in the triangle $\{ g-k \geq j \geq i \geq l \}$ to add back $\delta$ elements to $\Gamma'$, to give a new set $\Gamma_1$. Then
\begin{align*}
\mathcal F(\underline{m_0}, \Gamma_0) & \geq \mathcal F(\underline{m_0}, \Gamma') \\
		& \geq \mathcal F(\underline{m}, \Gamma') \\
		& = \mathcal F(\underline{m}, \Gamma_1).
\end{align*} 
But then $\mathrm{Card}(\Gamma_1) = \mathrm{Card}(\Gamma_0) = p -1$, and $m_{g-k} = 0$, so this gives a contradiction with our previous choice of $\underline{m_0}$. This method shows that the following cases cannot happen:
\begin{enumerate}[(i)] \itemsep=0em
\item $k \geq 4$ and $g- k \geq 4$ ;
\item $k \leq 3$ and $g - k \geq 5$ ;
\end{enumerate}

Now, we treat the case $g-k = 3$. Then $k \geq 5$ because we supposed that $g \geq 8$. By the previous argument, all lines of $\Pi_0$ must be filled. Indeed, if there were a missing element on the first row, we could remove 3 elements on this row, and put them in the triangle $\{ 2 \leq i \leq j \leq 3 \}$, to obtain a new set $\Gamma'$. Then we would achieve a lower bound $\mathcal F(\underline{m}, \Gamma') \leq \mathcal F(\underline{m_0}, \Gamma_0)$, for $\underline{m} = (1, 0, ..., 0)$, which is absurd. Thus, we have $a_1 = ... = a_{g-k} = k$, and $\underline{m_0}$ achieves the minimum of $\mathcal F(\underline{m_0}, \Gamma_0) = 2 \sum_{i = 1}^{g-k} (m_0)_i^2 + k \sum_{i = 1}^{g-k} (m_0)_i$. We see easily that the minimum of this expression is equal to $\mathcal F(\underline{m_0}, \Gamma_0) = \frac{2}{g-k} + k = \frac{2}{3} + k$. However, if we move 3 elements of the first row of $\Gamma_0$ to the triangle $\{ 2 \leq i \leq j \leq 3 \}$, to get a set $\Gamma'$, and if we let $\underline{m} = (1, 0, ..., 0)$, we have
$$
\mathcal F(\underline{m}, \Gamma') = 2 + (k - 3) = k - 1 < \mathcal{F} (\underline{m_0}, \Gamma_0).
$$ 
This is absurd.

The only case left is when $g - k = 2$. Then $a_2 = k$, and $a_1 = k - \delta$, with $\delta \in \{0, 1, 2, 3\}$. In that case $\underline{m_0} = ((m_0)_0, (m_0)_1, 0, ..., 0)$ minimizes 
$$
\underline{m} \in \Delta_g \longmapsto \mathcal F(\underline{m}, \Gamma_0) = 2 \left( m_1^2 + m_2^2 \right) + k - \delta m_1,
$$
among all possible choices of $(m_0)_1 \geq  (m_0)_2 \geq 0$ such that $(m_0)_0 + (m_1)_1 = 1$. We compute easily the value of this minimum: we have $\mathcal F(\underline{m_0}, \Gamma_0) = 2 + k - \left( 1 + \frac{ \delta}{4} \right)^2$. But if we move one element of the last row of $\Gamma$ to the slot $(1, 1)$ to get a new set $\Gamma'$, and if $\underline{m} = (1, 0, ..., 0)$, we have
$$
\mathcal F(\underline{m}, \Gamma') = 2 + (k - \delta - 1) \leq \mathcal F(\underline{m_0}, \Gamma_0),
$$ 
since $\delta \leq 3$. This contradicts our choice of $\underline{m_0}$.
\end{proof}
\end{paragraph}

\begin{paragraph}{Case 2. The last row of $\Pi_0$ is not full, and all the other ones are empty.}
We will determine a finite number of subcases in this situation, all summed up in Proposition \ref{propvalueDpcase2}.

Let $r$ be the number of elements of $\Pi_0$ on this last row. Then we have $r = \abs{\Pi_0} =  p - 1 - \frac{k(k+1)}{2} > 0$. The $g$-uple $\underline{m_0}$ minimizes
$$
\underline{m} \in \Delta_g \longrightarrow \mathcal F(m, \Gamma_0) = 2 \sum_{i = 1}^{g-k} m_i^2 + r \, m_{g-k}.
$$
By Lemma \ref{lemvanishing1}, since $(m_0)_{g-k} \neq 0$, we see that we must have $r (g-k - 1) \leq 3$. This leaves only a finite number of possibilities, for which we can compute the value of $\mathcal F(\underline{m_0}, \Gamma_0)$.

Putting everything together, we obtain the following result.

\begin{prop} \label{propvalueDpcase2} In the case 2, the only possibilities are the one listed in the table of Figure 3, which gives the value of $D_p$ for each case.

\begin{figure}[h]
\centering
\setlength{\tabcolsep}{5pt}
\begin{tabular}{|c|c|c|c|c|}
\hline
 & $g-k = 1$ & $g-k=2$ & $g-k = 3$ & $g-k = 4$ \\
\hline
$r =1$ & \multirow{4}{*}{$r + 2$} & $\frac{23}{16}$ & $\frac{11}{12}$ & $\frac{21}{32}$ \\
\cline{1-1}\cline{3-5}
$r = 2$ & & $\frac{7}{4}$ & \cellcolor{gray} & \cellcolor{gray} \\
\cline{1-1}\cline{3-5}
$r = 3$ & & $\frac{31}{16}$ & \cellcolor{gray} & \cellcolor{gray} \\
\cline{1-1}\cline{3-5}
$r \geq 4$ & & \cellcolor{gray} & \cellcolor{gray} & \cellcolor{gray}  \\
\hline
\end{tabular}
\caption{Possible values of $D_p$ in the case 2}
\end{figure}
\end{prop}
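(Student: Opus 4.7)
The plan is to reduce the value $D_p = \mathcal{F}(\underline{m_0}, \Gamma_0)$ in Case 2 to a small-dimensional convex quadratic program, then apply Lemma \ref{lemvanishing1} to bound the admissible $(g-k, r)$ and to read off each minimum value.

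First I would collapse the expression for $\mathcal F(\underline m, \Gamma_0)$. By Lemma \ref{lemshape}, $\Gamma_0 = T_{g-k} \cup \Pi_0$, with $\Pi_0$ consisting of its $r$ elements placed on the last row of $Q_{g-k}$ by saturation. Since Lemma \ref{lemlequalsk} gives $(m_0)_{g-k+1} = \cdots = (m_0)_g = 0$, every term contributed by $T_{g-k}$ vanishes, and each element of $\Pi_0$ contributes $m_{g-k} + m_j = m_{g-k}$. If $g-k \geq 2$, the diagonal count of $\Gamma_0$ is $k \leq g-2$, so we are in the ``otherwise'' branch of Definition \ref{defiF} and
\[
\mathcal F(\underline m, \Gamma_0) = 2 \sum_{i=1}^{g-k} m_i^2 + r\, m_{g-k}.
\]
If instead $g-k = 1$, we fall in the first branch ($k = g-1$), and by the reasoning already used in Case 2 of Lemma \ref{lemlequalsk} the minimizer is $\underline{m_0} = (1,0,\ldots,0)$, so $\mathcal F(\underline{m_0}, \Gamma_0) = 2 + r$, matching the first column of the table.

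Second, for $g-k \geq 2$, since the objective depends only on $(m_1, \ldots, m_{g-k})$ and $\underline{m_0}$ is supported there, I reduce to a minimization on $\Delta_{g-k}$ with coefficient vector $(b_1, \ldots, b_{g-k}) = (0, \ldots, 0, r)$ and apply Lemma \ref{lemvanishing1}. The explicit unconstrained minimizer $\pi_{H}^{\perp}(P)$ from the proof of that lemma has last coordinate
\[
m^{\ast}_{g-k} = -\frac{r}{4} + \frac{r}{4(g-k)} + \frac{1}{g-k} = \frac{4 - r(g-k-1)}{4(g-k)}.
\]
Lemma \ref{lemlequalsk} forces $(m_0)_{g-k} > 0$, so we must have $m^{\ast}_{g-k} > 0$, i.e. $r(g-k-1) \leq 3$. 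This leaves exactly the pairs $(g-k, r) \in \{(2,1),(2,2),(2,3),(3,1),(4,1)\}$ in addition to the column $g-k=1$.

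Third, for each of these finitely many cases I read off the minimum using the value formula of Lemma \ref{lemvanishing1} with $t = g-k$, $\sum_i b_i = r$, $\sum_i b_i^2 = r^2$:
\[
D_p = \frac{(4+r)^2}{8(g-k)} - \frac{r^2}{8}.
\]
A direct substitution yields $\tfrac{23}{16}, \tfrac{7}{4}, \tfrac{31}{16}, \tfrac{11}{12}, \tfrac{21}{32}$ respectively, matching the table. The main obstacle is purely bookkeeping: one must be careful to switch to the $g-k = 1$ branch of Definition \ref{defiF} (where the term $2\sum m_i^2$ is replaced by the constant $2$) and to apply Lemma \ref{lemvanishing1} in ambient dimension $g-k$ rather than $g$; once these points are handled, the estimate $r(g-k-1) \leq 3$ is the sole qualitative input and the remainder is a tabulation.
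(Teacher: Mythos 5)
Your proposal is correct and follows the same route as the paper: reduce $\mathcal F(\cdot,\Gamma_0)$ in Case 2 to the quadratic $2\sum_{i=1}^{g-k} m_i^2 + r\,m_{g-k}$ on $\Delta_{g-k}$ (handling $g-k=1$ via the first branch of Definition \ref{defiF}), invoke Lemma \ref{lemvanishing1} together with $(m_0)_{g-k}\neq 0$ to get $r(g-k-1)\leq 3$, and compute $\frac{(4+r)^2}{8(g-k)}-\frac{r^2}{8}$ for the surviving pairs. You have merely spelled out the finite check that the paper leaves implicit.
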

\end{paragraph}

Now, we can find the actual constant $D_p$ when $g \geq 8$. Because of Lemma \ref{lemvalueDpcase0} and Proposition \ref{propvalueDpcase2}, we have $D_p = \frac{2}{g-k}$, where $k$ is the smallest integer such that $p-1 \leq \frac{k(k+1)}{2}$, \emph{except} if $p$ is the cardinal of a set $\Gamma_0$, having one of the shapes discussed in Proposition \ref{propvalueDpcase2}, which then gives the value of $D_p$. 
\medskip

To find the values of the constants $D_p$ when $g < 8$, we only have a finite number of cases to handle. Rather than doing a tedious case by case analysis, we prefer to present a Python procedure to compute these constants. Incidentally, it can also permit to check that the previous computations are correct for a given value of $g$. 

We present these Python functions in an annex to this article. The important functions are \verb?C?, which computes the value of the constant $D_p$ using Proposition \ref{propeqDp}, and \verb?C1?, which takes a pair $(g, p)$, and returns $(g+1)C_p$, where $C_p$ is given by the table of Proposition \ref{propconstants}. We check that \verb?C? and \verb?C1? return the same values for any $2 \leq g \leq 8$, and any $p \leq \frac{g(g+1)}{2}$, which ends the proof of Proposition \ref{propconstants}. We can also check that we obtain the same results for other $g \geq 9$, which tends to confirm our previous computations.
\medskip

At this point, we can prove Corollary \ref{corolmgn}, by using Theorem \ref{mainthm}, combined with Proposition \ref{propconstants}.

\begin{proof} [Proof of Corollary \ref{corolmgn}] For $n \geq 3$, the period map gives a generically immersive map from $M_{g, n}$ to the projective manifold $\overline{A_g(n)}$. Let $p = 3g - 3$ be the dimension of $\mathrm{dim} M_{g,n}$. 

We can use Proposition \ref{propconstants} to determine the value of $C_p$ in this case: it is easy to see that $k = \lfloor \frac{  \sqrt{24 g - 31} - 1 }{2} \rfloor$. Remark that $p - 1 \neq \frac{k(k+1)}{2}$, otherwise we would have $k = \frac{\sqrt{24 g - 31} - 1}{2}$, so $24g - 31$ would be a perfect square. However, this is impossible, since $24 g - 31 \equiv - 1 [\mathrm{mod} \, 3]$. Thus, $r = p - 1 - \frac{k(k+1)}{2} > 0$.

If $g \geq 12$, a simple computation shows that $g - k \geq 5$, so by the table 1, we have $C_p = \frac{2}{g-k-1} \frac{1}{g+1}$. Thus, by Proposition \ref{weissauer86}, we have in that case $\frac{1}{\alpha_{\mathrm{base} C_p}} \leq 6(g -k - 1)$, which gives the first point by Theorem \ref{mainthm}.

If $g \leq 11$, we can compute the constant $C_{3g - 3}$ for each possible value of $g$ (using for example the Python function \verb?C(g, p)? provided in the annex), to give an upper bound for $\frac{1}{\alpha_{\mathrm{base}} C_{3 g - 3}}$ for each $g \in \llbracket 2, 12 \rrbracket$. Then, Theorem \ref{mainthm} permits to conclude in a similar manner.
\end{proof}

To prove Corollary \ref{corolsubv}, we will need to extend the previous results, and especially Theorem \ref{thmpartial}, to the case of \emph{singular quotients} of bounded symmetric domains. This is the purpose of the next section.

\section{Singular quotients of bounded symmetric domains}

We now explain how to extend the previous considerations to a setting where the lattice $\Gamma$ is no longer assumed to be torsion free. We will instead assume that $\Gamma$ has no element fixing some codimension $1$ component. As above, we consider a resolution of the singularities of the Baily-Borel compactification $\widetilde{X} \overset{\pi}{\longrightarrow} \overline{X}^{BB}$.  
\medskip

Let $D = p^{-1}(D^{BB})$ be the boundary divisor of $\widetilde{X}$, where $D^{BB} = \overline{X}^{BB} \setminus X$, and let $E \subset \widetilde{X}$ be the sum of the components of the exceptional divisor lying above $X$. We can assume that $E + D$ has simple normal crossing support. 

Let $d \in \mathbb N$. Following \cite{Tai1982} and \cite{weissauer86} , we will determine a condition on the elliptic elements of $\Gamma$ so that the singular metric induced by $h_{\mathrm{Berg}}$ on $\bigwedge^d T_{\widetilde{X}}$ is locally bounded on $\widetilde{X} \setminus D$.
\medskip

Denote by $F^{(2)}$ the set of crossing points of the support of $E + D$, and consider a point $x \in E \setminus F^{(2)}$. Let $W \cong \mathbbl{\Delta}^n$ be a polydisk centered at $x$ such that $W \cap E = \left\{ w_1 = 0 \right\}$, where $(w_1, ..., w_n)$ are coordinates on $W$. Then, we can find a neighborhood of $W$ which is a resolution of a quotient $\quotient{G}{U}$, where $U \Subset \Omega$ is an open subset and $G$ is a finite group of automorphisms of $U$. Let $U^\circ \subset U$ be the open set where $G$ acts freely, and $W^\circ = W \setminus E$.

Then, we have the following diagram:
$$
  \xymatrix{
      V^\circ = \mathbbl{\Delta}^\ast \times \mathbbl{\Delta}^{n-1} \ar[r]^-{i} \ar[d]^{p} & U^\circ  \ar[d]^{q} \\
      W^\circ = \mathbbl{\Delta}^\ast \times \mathbbl{\Delta}^{n-1} \ar[r]^-{j}     & \quotient{G}{U^\circ} 
	}	
  $$
where $V^\circ$ is any connected component of the fibre product $W^\circ \times_{\tiny{\quotient{G}{U^\circ}}} U^{\circ}$. The group of the étale cover $p$ is cyclic of order $r$ for some $r \geq 1$, and its generator acts by $[1] \cdot (z_1, ..., z_n) = (\gamma z_1, z_2 ..., z_n)$, with $\gamma = e^{\frac{2 i \pi}{r}}$. Moreover, $p$ can be written $p\, (z_1, ..., z_n) = (w_1 = z_1^r, w_2 = z_2, ..., w_r = z_r)$. Remark that both $i$ and $j$ are open immersions, and that $i$ is a morphism of étale covers. Thus, if $g \in G$ is the image of $[1]$ by the induced map $\quotientd{\mathbb Z}{r \mathbb Z} \longrightarrow G$, we must have, for any $(z_1, ..., z_n) \in V^\circ$:
$$
i (\gamma z_1, z_2 ..., z_n) = g \cdot i\, (z_1, ..., z_n).
$$

Since $g^r = \mathrm{Id}_{U}$, by Bochner's linearization theorem, we can suppose that there are coordinates near $\overline{U^\circ}$ such that $g$ has the expression
$$
g \cdot (Z_1, ..., Z_n) = (\gamma^{a_1} Z_1, ..., \gamma^{a_n} Z_n)
$$
for some integers $a_i \in \llbracket 0, r - 1 \rrbracket$. Thus, if we write $i(z) = (F_1(z), ..., F_n(z))$ for $z \in V^0$, we see easily that for any $k$, we have
$$
F_k(z) = z_1^{a_k} G_k (z_1^r, z_2, ..., z_n),
$$ 
for some holomorphic map $G_k : W \longrightarrow U$, where $W = \mathbbl{\Delta} \times \mathbbl{\Delta}^{n-1} \supset W^\circ$.
\medskip

As explained in \cite{weissauer86}, we can now write a condition on $g$ such that for any $d$-vector field $v \in \Gamma \left( W, \bigwedge^d T_W \right)$, the meromorphic vector field $i_\ast p^\ast (v)$ is actually holomorphic.

Let $I \subset \llbracket 1, n \rrbracket$ be a subset of cardinal $d$. Then, we have
\begin{align} \nonumber
p_\ast i^\ast (d Z_I) & = p_\ast i^\ast \left(\bigwedge_{j = 1}^d dZ_{I_j} \right) \\ \nonumber
	& = p_\ast \left( \bigwedge_{j=1}^d (a_k z_1^{a_k - 1} (G_k \circ p) dz_1 + z_1^{a_k} d(G_k \circ p) \right) \\ 
	& = \bigwedge_{j=1}^{d} \left(a_k w_1^{\frac{a_k}{r}} G_k \frac{dw_1}{w_1} + w_1^{\frac{a_k}{d}} d G_k \right). \label{eqaj}
\end{align}
Expending the last expression, we see that if the following condition $(\mathrm{I}_{x, d})$ is satisfied, $p_\ast i^\ast (dZ_I)$ has no pole along ${w_1 = 0}$ for any $I$, or dually, that for any $I$, the $p$-vector field $i_\ast p^\ast \left(\bigwedge_{j \in I} \frac{\partial}{\partial z_j} \right)$ is bounded near $E$ on $U^\circ$.

\begin{crit}{$\mathbf{ I_{x, d}}$}
For the given $x \in E \setminus F^{(2)}$, let $(a_1, ..., a_n) $ and $r$ be as above. Then for any choice of $d$ distinct elements $a_{I_1}, ..., a_{I_d}$, we have
\begin{equation} \label{conditionI}
\sum_{j=1}^{d} a_{I_j} \geq r. \tag*{$(\mathrm I_{x, d})$}
\end{equation}
\end{crit}

Remark that if $(\mathrm{I}_{x, d})$ is satisfied, then $(\mathrm{I}_{x, p})$ is satisfied for any $p \geq d$.
\medskip

Now, if we assume that the criterion $(\mathrm{I}_{x, d})$ is satisfied for any point $x \in E \setminus F^{(2)}$, the metric $h_{\mathrm{Berg}}$ on $U^0 \Subset \Omega$ being equivalent to the standard metric $h_{\mathbb C^n}$. Thus, we see that the norm of any $d$-vector field in $\bigwedge^d T_W$ is bounded from above on $W$, for the metric $\bigwedge^d h_{\mathrm{Berg}}$. 

Moreover, the Bergman metric has negative holomorphic curvature, so it has Poincar\'e growth near any point $x \in D \setminus F^{(2)}$ by the argument already used in Lemma \ref{lembound} (see \cite{brucad17}). This implies that $h_{\mathrm{Berg}}$ is bounded on $T_{\widetilde{X}} (-\log D)$ near such any point of $D \setminus F^{(2)}$.

More precisely, we get the following result.

\begin{prop} \label{critborne} Assume that $(\mathrm{I}_{x, d})$ is satisfied for any $x \in E \setminus F^{(2)}$. Then $\bigwedge^d h_{\mathrm{Berg}}$ is locally bounded from above as a singular metric on $\bigwedge^d T_{\widetilde{X}} (- \log D)$.
\end{prop}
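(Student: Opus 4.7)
The plan is to verify the bound locally at each point $x \in \widetilde{X}$, separating cases by the position of $x$ relative to $D$, $E$, and the crossing locus $F^{(2)}$. First I would dispense with the easy cases: at $x \in X \setminus E$ the metric $h_{\mathrm{Berg}}$ is smooth, and at $x \in D \setminus (E \cup F^{(2)})$ the Poincaré growth of $h_{\mathrm{Berg}}$ along the smooth branch of $D$ (invoked in the same way as in Lemma \ref{lembound} via Ahlfors--Schwarz) gives the bound on $T_{\widetilde{X}}(-\log D)$, hence on $\bigwedge^d T_{\widetilde{X}}(-\log D)$.

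The heart of the matter is the case $x \in E \setminus F^{(2)}$. Here $D$ is absent from a neighborhood of $x$, so the log twist is trivial and I must bound a local frame $\bigwedge_{k \in I} \partial/\partial w_k$ of $\bigwedge^d T_W$ directly under $\bigwedge^d h_{\mathrm{Berg}}$. I would use the local cover picture displayed just before the condition $(\mathrm{I}_{x,d})$: the $G$-invariance of the Bergman metric on $U \Subset \Omega$ together with the étale property of $p$ on $W^\circ$ means that this norm coincides, on the open locus $W^\circ$, with the $h_{\mathrm{Berg}}$-norm of a pulled-up $d$-vector field on $V^\circ \subset U$. Dualizing the calculation \eqref{eqaj} and taking ramification $w_1 = z_1^r$ into account, the $w_1$-weight of the local frame behaves in the worst term like $w_1^{\left(\sum_{j\in I} a_{I_j}/r\right)-1}$; this exponent is non-negative precisely under $(\mathrm{I}_{x,d})$. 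Since $h_{\mathrm{Berg}}$ is uniformly comparable to the Euclidean metric on the relatively compact open $U \Subset \Omega$, no further estimate is needed and the bound extends across $\{w_1 = 0\}$.

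Finally, the points $x \in F^{(2)}$ should be handled by a codimension-two extension: the previous cases already establish the bound on a punctured neighborhood $U_x \setminus F^{(2)}$, and since $F^{(2)}$ has complex codimension at least two and local boundedness of a singular hermitian metric only requires the bound to hold off a negligible set, this is enough to conclude. The main obstacle I expect is the third step above: correctly dualizing the computation leading to \eqref{eqaj} while tracking the ramification factors of $p$ and the choice of local frame in $\bigwedge^d T_W$, so that the combinatorial condition $(\mathrm{I}_{x,d})$ appears on the nose. The other cases follow cleanly from material already in the paper.
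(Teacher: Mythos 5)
Your decomposition into cases according to the position of $x$ relative to $E$, $D$, and $F^{(2)}$ matches the structure of the paper, and your treatment of the first three cases is correct and in line with the discussion preceding the proposition (the Poincar\'e growth argument near $D \setminus F^{(2)}$, the local-cover picture and the dualization of \eqref{eqaj} near $E \setminus F^{(2)}$). But the final step, the one you wave off as a routine codimension-two extension, is exactly where the proposition's actual content lies, and your argument there has a genuine gap.

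The problem is the claim that ``the previous cases already establish the bound on a punctured neighborhood $U_x \setminus F^{(2)}$'' and that boundedness off a codimension-two set suffices. The earlier cases give you, for each point $y \in U_x \setminus F^{(2)}$, a neighborhood of $y$ and a constant $C_y$ bounding $\|s\|^2_{\bigwedge^d h_{\mathrm{Berg}}}$ there; nothing so far controls how these constants behave as $y$ approaches $F^{(2)}$, and without such control there is no uniform bound on $U_x \setminus F^{(2)}$. Boundedness of a function off a codimension-two analytic set does not in general imply boundedness near that set (for a trivial counterexample, $e^{1/|z|}$ on $\mathbb{C}^2\setminus\{0\}$), so the ``negligible set'' reasoning does not apply to an arbitrary singular metric. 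The paper's proof supplies the missing uniformity by exploiting plurisubharmonicity: since $h_{\mathrm{Berg}}$ has non-positive curvature on $\widetilde X \setminus (E\cup D)$, for a local holomorphic section $s$ of $\bigwedge^d T_{\widetilde X}(-\log D)$ the function $f = \log\|s\|^2_{\bigwedge^d h_{\mathrm{Berg}}}$ is psh off $E\cup D$; the local boundedness already established near $(E+D)\setminus F^{(2)}$ lets $f$ extend as a psh function across that locus, and then the standard psh extension theorem across a subset of codimension $\geq 2$ carries $f$ across $F^{(2)}$. Since psh functions are automatically locally bounded from above, this gives the required uniform bound. You should invoke plurisubharmonicity of $\log\|s\|^2$ explicitly; without it the codimension-two step does not close.
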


\begin{proof} It only remains to be shown that $\bigwedge^d h_{\mathrm{Berg}}$ is locally bounded as a metric on $\bigwedge^d T_{\widetilde{X}} (- \log D)$ near $F^{(2)}$. The metric $\bigwedge^d h_{\mathrm{Berg}}$ has negative curvature on $\widetilde{X} \setminus (E \cup D)$, so if $s$ is a local holomorphic section of $\bigwedge^d T_{\widetilde{X}} (- \log D)$, defined on some open subset $O$, the function $f = \log \norm{s}^2_{\bigwedge^d h_{\mathrm{Berg}}}$ is plurisubharmonic on $O \setminus E$. Since $\bigwedge^d h_{\mathrm{Berg}}$ is locally bounded near $(E + D) \setminus F^{(2)}$, $f$ is locally bounded at these points, so extends as a plurisubharmonic function across $(E + D) \setminus F^{(2)}$. Finally, $\mathrm{codim}\, F^{(2)} \geq 2$, so $f$ also extends across $E^{(2)}$ as a psh function, and thus is locally bounded from above on $O$.
\end{proof}

The previous considerations permit to give the following refinement of Theorem \ref{thmpartial} in this context. As before, we let $L_0 = \mathcal O_{\widetilde{X}} \left( \pi^\ast K_{\overline{X}^{BB}} \right)$.

\begin{thm} \label{thmsinghyp} Let $p \in \llbracket 1, n \rrbracket$. Suppose that there exists a rational number $\alpha > \frac{1}{C_p}$ such that $L_{\alpha} = L_0 \otimes \mathcal O ( - \alpha D)$ is effective, and that the condition $(\mathrm{I}_{x, p})$ holds for any $x \in E \setminus F^{(2)}$. Then the conclusion of Theorem \ref{thmpartial} holds, with $D \cup \mathbb B(L_\alpha)$ replaced by $D \cup \mathbb B(L_\alpha) \cup E$. 
\end{thm}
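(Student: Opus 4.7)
\textbf{Proof plan for Theorem \ref{thmsinghyp}.} The strategy is to replay the arguments of Theorem \ref{thmlincomb} and of the $p$-measure hyperbolicity part of Theorem \ref{thmpartial}, with Proposition \ref{critborne} doing the extra work needed to control the Bergman metric near the exceptional divisor $E$. Fix a subvariety $V \subset \widetilde{X}$ with $\dim V = q \geq p$ and $V \not\subset D \cup \mathbb{B}(L_\alpha) \cup E$, take a log-resolution $j : \widetilde V \to V$ such that $F = j^\ast D$ has SNC support, and pick rationals $\lambda \in \,]0, C_p \alpha[$ and $\beta \in \left]\tfrac{\lambda}{m\alpha}, \tfrac{C_p}{m}\right[$. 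Choose a section $s \in H^0(\widetilde{X}, L_0^{\otimes m} \otimes \mathcal{O}(-m\alpha D))$ with $s|_V \not\equiv 0$, and form the candidate metric
\[
\widetilde{h} = \|s\|^{2\beta} \det(j^\ast h_{\mathrm{Berg}})
\]
on the dual of $K_{\widetilde{V}} + (1-\lambda) F_{red}$, exactly as in \eqref{defimetric}.

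The curvature computation is unchanged from Theorem \ref{thmlincomb}: at any point of $V_{\mathrm{reg}} \setminus (D \cup E \cup \{s=0\})$, Proposition \ref{propricci} together with Proposition \ref{ineqprop} gives $i\Theta(\widetilde h) \leq (\beta m - C_q)\omega_{\mathrm{Berg}} \leq (\beta m - C_p)\omega_{\mathrm{Berg}}$, which is strictly negative by our choice of $\beta$. So $\widetilde h^\ast$ has positive curvature where it is smooth, and the standard psh extension across the analytic sets $j^{-1}(V_{\mathrm{sing}} \cup E)$ will upgrade this to positivity in the sense of currents — provided we first show that $\widetilde h$ is locally bounded everywhere.

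The main new point, and the one obstacle specific to the torsion case, is the boundedness of $\widetilde h$ near $j^{-1}(E)$. This is exactly what Proposition \ref{critborne} is designed to provide. Since $q \geq p$, the remark following the criterion $(\mathrm{I}_{x,d})$ implies that $(\mathrm{I}_{x,q})$ holds at every $x \in E \setminus F^{(2)}$, so $\bigwedge^q h_{\mathrm{Berg}}$ is locally bounded as a metric on $\bigwedge^q T_{\widetilde X}(-\log D)$ away from $F^{(2)}$. Restricting to $V$ — which by hypothesis is not contained in $E$ — the dual metric $\det(j^\ast h_{\mathrm{Berg}})$ is locally bounded as a metric on $(K_{\widetilde V} + F_{red})^\ast$ outside $j^{-1}(F^{(2)})$. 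Near $D \setminus F^{(2)}$, Poincaré growth of $j^\ast h_{\mathrm{Berg}}$ (from Ahlfors--Schwarz, exactly as in Lemma \ref{lembound}) combined with the logarithmic bound on $\|s\|$ from Lemma \ref{loggrowth} ensures that the extra factor $\|s\|^{2\beta}$ absorbs the $|w|^{-2\lambda}$ blow-up, since $\beta m \alpha > \lambda$. The crossings $F^{(2)}$ form an analytic set of codimension $\geq 2$, across which the local psh weights of $\widetilde h^\ast$ extend by the standard codimension-two extension theorem, giving local boundedness on all of $\widetilde V$.

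With $\widetilde h^\ast$ thereby shown to be a singular metric with positive curvature and locally bounded weights on $K_{\widetilde V} + (1-\lambda) F_{red}$, Boucksom's theorem \cite{bou02} gives the bigness of this line bundle together with the same lower bound on its volume as in Theorem \ref{thmlincomb}, which in particular yields that $V$ is of general type. For the $p$-measure hyperbolicity assertion, repeat the construction of the metric \eqref{modmetriccp} on $\mathbb{B}^p$ pulled back under any holomorphic $f : \mathbb{B}^p \to \widetilde X$ with image not meeting $D \cup \mathbb{B}(L_\alpha) \cup E$; Proposition \ref{critborne} again ensures that $\det h_0$ is locally bounded on $\mathbb{B}^p$ (the pullback of $E$ being a proper analytic subset), and the Ahlfors--Schwarz argument concludes as before. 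The expected hard point is the interplay near $j^{-1}(E \cap D)$, where one must combine Poincaré growth along $D$ with the $(\mathrm{I}_{x,p})$-controlled growth along $E$, but both degenerations happen transversally in suitable local coordinates and the psh extension across the codimension-two locus $F^{(2)}$ absorbs the remaining subtlety.
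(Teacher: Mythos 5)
Your proposal follows essentially the same route as the paper: same twisted metric $\widetilde h = \|s\|^{2\beta}\det(j^\ast h_{\mathrm{Berg}})$, the same curvature computation via Propositions \ref{propricci} and \ref{ineqprop}, Proposition \ref{critborne} as the crucial new ingredient giving local boundedness of $\det(j^\ast h_{\mathrm{Berg}})$ near $E$, and a psh-extension across the codimension-two crossing locus, followed by Boucksom's volume bound.

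One point you gloss over that the paper explicitly addresses: Lemma \ref{loggrowth} was stated and proved for torsion-free lattices and a desingularization of $\overline{X}^{BB}$ in that setting, so one must check that the logarithmic growth bound on $\|s\|$ still holds in the present singular setting before invoking it near $D$. The paper justifies this by noting the hypothesis that $\Gamma$ fixes no codimension-one component (so \cite{crt17} supplies the estimate on one birational model) and then transports the estimate via the same graph-resolution argument as in the second half of the proof of Lemma \ref{loggrowth}. You should add a sentence to this effect; otherwise the argument as written quietly relies on a result whose stated hypotheses are not met.
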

\begin{proof} Let us explain how to prove the first point of the conclusion, the second one being similar. Let $V \subset \widetilde{X}$ be a subvariety of dimension larger than $p$, and assume that $V \not\subset D \cup \mathbb B(L_\alpha) \cup E$. If $\widetilde{V}$ is a resolution of the singularities of $V$, we construct a metric on $K_{\widetilde{V}}$ of the form
\begin{equation} \label{eqdefihtilde}
\widetilde{h} = \norm{s}^{2 \beta} \mathrm{det} (h_{\mathrm{Berg}} |_{T_V}^\ast),
\end{equation}
for some section $s \in \Gamma \left( L_0^{r} \otimes \mathcal O (- q D) \right)$. The rest of the proof goes exactly in the same way ; note that Lemma \ref{loggrowth} extends to this more general context, because of our assumption that $\Gamma$ has no fixed component in codimension $1$. Indeed, the proof of \cite{crt17} gives the first part of the proof of Lemma \ref{loggrowth}. Also, we can use the same graph argument as in the second part of the proof, to carry the estimate on the birational model for $\overline{X}$ used in \cite{crt17}, to the model we used here.

The only missing element is to show that $\widetilde{h}^\ast$ is locally bounded near $E \cap V$. This part is provided by Proposition \ref{critborne}: this result implies indeed that the metric $ \mathrm{det} (h_{\mathrm{Berg}} |_{T_V}^\ast)$ is locally bounded on $\bigwedge^{\dim V} T_{\widetilde V \setminus D}$.
\end{proof}

It is not hard to formulate a version of Theorem \ref{thmsinghyp} free of the conditions $(\mathrm{I}_{x,d})$, by introducing the following quantities.

\begin{defi} \label{defibetaid} Let $p \in \llbracket 1, n \rrbracket$. For any component $E_i$ of $E$, let $\beta^i_p \geq 0$ be the smallest rational number such that for any $x \in E_i \setminus F^{(2)}$, if we let $(a_1, ..., a_n)$ and $r$ be as in Condition $(\mathrm{I}_{x, d})$, we have, for any choice of $p$ distinct elements $a_{I_1}, ..., a_{I_p}$, 
$$
\frac{\sum_{j = 1}^{p} a_{I_j}}{r} \geq 1 - \beta_p^i.
$$
\end{defi}

\begin{rem} Let $G_i$ be the isotropy group of the subvariety $V_i \subset \overline{X}$ on which the component $E_i$ projects. With the same notations as above, we necessarily have $r \leq \abs{G_i}$ since $\mathbb Z / r \mathbb Z$ embeds as a subgroup of $G_i$. Moreover, for each $l$ we have $a_l \geq 1$, since no codimension $1$ component is fixed. This implies that $\sum_{j = 1}^p a_{I_j} \geq p$, so
$$
\beta_p^i \leq \max \left( 0, 1 - \frac{p}{\abs{G_i}} \right)
$$
in general. In particular, if $p \geq \max_i \abs{G_i}$, then $\beta_p^i = 0$ for any $i$, and the condition $(\mathrm{I}_{x, p})$ is satisfied everywhere.
\end{rem}

Then, with the same proof as in Theorem \ref{thmsinghyp}, we can state the following result, which generalizes the criterions for hyperbolicity of \cite{crt17}:

\begin{corol} \label{corolsing} Let $\Delta_p = D + \sum_i \beta^i_p E_i$. Suppose that there exists a rational number $\alpha > \frac{1}{C_p}$ such that the $\mathbb Q$-line bundle $L_{\alpha, p} = \mathcal O_{\widetilde{X}} \left(L_0 \otimes  \mathcal O(- \alpha \Delta_p)  \right)$ is effective. Then the conclusion of Theorem \ref{thmpartial} holds, with $D \cup \mathbb B(L_\alpha)$ replaced by $D \cup E \cup \mathbb B(L_{\alpha, p})$.
\end{corol}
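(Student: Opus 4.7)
The plan is to imitate the proof of Theorem \ref{thmsinghyp}, but to use sections of the new line bundle $L_{\alpha,p}^{\otimes m}$, which vanish not only along $D$ (at order $m\alpha$), but also along each component $E_i$ (at order $m\alpha\beta_p^i$). The additional vanishing along $E_i$ is precisely calibrated, via Definition \ref{defibetaid}, to compensate for the unboundedness of the Bergman metric near $E$ that prevented us, in Theorem \ref{thmsinghyp}, from handling the situation without assuming $(\mathrm{I}_{x,p})$.

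Concretely, for a subvariety $V \subset \widetilde X$ with $q := \dim V \geq p$ and $V \not\subset D \cup E \cup \mathbb B(L_{\alpha,p})$, I would fix a log resolution $j : \widetilde V \to V$ with $j^{-1}(D \cup E)$ of SNC support, pick $m$ large and a nonzero $s \in H^0(\widetilde X, L_{\alpha,p}^{\otimes m})$ with $s|_V \not\equiv 0$, choose rationals $\lambda \in \,]0, C_p\alpha[$ and $\beta \in \,]\lambda/(m\alpha), C_p/m[$, and then form
$$\widetilde h = \|s\|^{2\beta}\, \det(j^* h_{\mathrm{Berg}})$$
as a singular metric on the dual of $K_{\widetilde V} + (1-\lambda)(j^{-1}D)_{\mathrm{red}}$, just as in \eqref{eqdefihtilde}.

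The central new point is to check that $\widetilde h$ remains locally bounded on $\widetilde V$, including near $j^{-1}(E)$. Near $(j^{-1}D)_{\mathrm{red}}$, the argument is unchanged from Lemma \ref{lembound}: Poincaré growth of $j^* h_{\mathrm{Berg}}$ (from Ahlfors--Schwarz and negative holomorphic sectional curvature) combines with the logarithmic growth of $\|s\|$ from Lemma \ref{loggrowth} (whose proof extends to the present setting because $\Gamma$ has no codimension-one fixed component) and with the vanishing of $s$ of order $m\alpha$ along $D$. Near a point of $j^{-1}(E_i) \setminus j^{-1}(F^{(2)})$, the local computation \eqref{eqaj} used in Proposition \ref{critborne}, together with the very definition of $\beta_p^i$, shows that the restriction of $\bigwedge^q h_{\mathrm{Berg}}$ to $T_V$ blows up at most at order $\beta_q^i \leq \beta_p^i$ along $E_i$ (since $q \geq p$ and the sequence $d \mapsto \beta_d^i$ is nonincreasing). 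The vanishing of $\|s\|^{2\beta}$ at order $2\beta m\alpha\,\beta_p^i$ along $E_i$ dominates this pole, as $\beta m\alpha > \lambda$ can be taken arbitrarily close to $C_p\alpha > 1 \geq \beta_q^i/\beta_p^i$. At the SNC crossings, which have codimension at least two in $\widetilde V$, boundedness from above is restored by Riemann's extension theorem for plurisubharmonic functions.

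The remainder of the argument follows Theorem \ref{thmsinghyp} verbatim. On $\widetilde V \setminus j^{-1}(D \cup E \cup V_{\mathrm{sing}})$, the dual metric $\widetilde h^*$ is smooth with positive curvature bounded from below by $(C_p - \beta m)\, j^*\omega_{\mathrm{Berg}}$, via Proposition \ref{propricci} and Proposition \ref{ineqprop}; the extension theorem for psh functions then propagates this to a positive current on the whole $\widetilde V$. Boucksom's criterion \cite{bou02} yields the bigness of $K_{\widetilde V} + (1-\lambda)(j^{-1}D)_{\mathrm{red}}$ together with the same volume lower bound as in Theorem \ref{thmlincomb}, and hence $V$ is of general type. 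The $p$-measure hyperbolicity modulo $D \cup E \cup \mathbb B(L_{\alpha,p})$ is obtained by repeating the Ahlfors--Schwarz argument at the end of Section 2.2.1, with the modified metric $h_0 = \|s\|^{2\beta/p}\, f^* h_{\mathrm{Berg}}$ on $\mathbb B^p$, whose boundedness and negative curvature properties follow from the same local analysis near $E$. The main obstacle is precisely this step of boundedness near $E$: one must translate the asymptotics \eqref{eqaj} into a uniform statement valid along every branch of $E_i$ and through the codimension-two locus $F^{(2)}$, which is where the quantitative definition of $\beta_p^i$ plays its decisive role.
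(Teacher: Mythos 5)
Your proposal is correct and follows essentially the same strategy as the paper: twist $\det(j^\ast h_{\mathrm{Berg}})$ by $\|s\|^{2\beta}$ for a section $s$ of $L_{\alpha,p}^{\otimes m}$ not vanishing identically on $V$, check local boundedness of $\widetilde h$ near $D$ via Lemma \ref{loggrowth} and near $E$ via \eqref{eqaj} and Definition \ref{defibetaid}, extend psh weights across the codimension-two crossing locus, and apply Boucksom's volume criterion; the paper itself takes $\lambda = 1$ and leaves the boundedness near $E$ as "we see easily," whereas you spell it out (correctly identifying that $\beta m\alpha$ must exceed both $\lambda$ and the ratio $\beta_q^i/\beta_p^i \leq 1$, which is achievable since $C_p\alpha > 1$). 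One small caution: keeping $\lambda < 1$ to recover the full volume inequality of Theorem \ref{thmlincomb} requires taking $\beta m$ near $C_p$ (to ensure $\beta m\alpha > 1$), which degrades the curvature margin $C_p - \beta m$; the paper sidesteps this by restricting to $\lambda = 1$, which is all that the corollary's statement requires.
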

\begin{proof}
The proof of the first point goes in the same lines as for Theorem \ref{thmsinghyp}: we resume the same notations, \emph{mutatis mutandis}. In this case, the metric $\bigwedge^p h_{\mathrm{Berg}}$ is not necessarily bounded on $\Lambda^p T_{\widetilde{X}} (- \log D)$. However, we see easily that we can choose $s$ so that the metric $\widetilde{h}$ of \eqref{eqdefihtilde} is still locally bounded on $K_{\widetilde{V}}^\ast$. Indeed, for $m$ high enough, we can find a section $s$ of $L_0^{\otimes m}$ vanishing at some order $l_i$ along any $E_i$, so that $\frac{l_i}{m} \geq \beta^i_p$, and  $s|_V \neq 0$. Expending equation \eqref{eqaj} and using Definition \ref{defibetaid}, we see easily that $\widetilde{h}$ is locally bounded near $E$ and $D$ if we choose $\beta$ as in the proof of Theorem \ref{thmlincomb} (with $\lambda = 1$). As before, for such a choice of $\beta$, the metric $\widetilde{h}$ has positive curvature on its smooth locus. This permits to conclude that $K_{\widetilde{V}}$ is big using again \cite{bou02}.

The proof of the $p$-measure hyperbolicity can be extended in the same manner.
\end{proof}

We now get back to the study of $\overline{A_g}$. Let $\widetilde{A_g} \overset{\pi}{\longrightarrow} \overline{A_g}^{BB}$ be any resolution of singularities. Recall the following result of Weissauer \cite{weissauer86}:

\begin{prop}[{\cite[Zusammenfassung, p. 218]{weissauer86}}] \label{propcritag} The condition $(\mathrm{I}_{x, p})$ holds for any simple point of the exceptional divisor of $\widetilde{A_g}$, as soon as $p \geq \dim A_g - g + 7$.
\end{prop}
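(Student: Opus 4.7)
The plan is to reduce condition $(\mathrm{I}_{x, p})$ to a numerical statement about eigenvalues of an elliptic element of $\mathrm{Sp}(2g, \mathbb{Z})$, and then invoke the detailed enumeration carried out by Weissauer.

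First I would identify the data at a simple point. In the local cover construction preceding Proposition \ref{critborne}, at a point $x \in E \setminus F^{(2)}$ the cyclic deck transformation group $\mathbb{Z}/r\mathbb{Z}$ corresponds to the generic stabilizer along a codimension-one fixed component of an elliptic element $g \in \Gamma \subset \mathrm{Sp}(2g, \mathbb{Z})$ of order $r$ fixing some $\tau_0 \in \mathbb{H}_g$. The stabilizer of $\tau_0$ in $\mathrm{Sp}(2g, \mathbb{R})$ is a maximal compact subgroup isomorphic to $\mathrm{U}(g)$, and the induced tangent action on $T_{\tau_0}\mathbb{H}_g \cong S_g(\mathbb{C})$ is $M \mapsto A M A^T$, where $A \in \mathrm{U}(g)$ represents $g$ in this compact subgroup. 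Diagonalizing $A$ with eigenvalues $e^{2 \pi i b_l / r}$ for $b_l \in \{0, \ldots, r-1\}$, the $n = g(g+1)/2$ tangent weights are
\[
a_{l, m} \;=\; (b_l + b_m) \bmod r, \qquad 1 \leq l \leq m \leq g.
\]

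Second I would reformulate the criterion: requiring $\sum_{(l,m) \in J} a_{l, m} \geq r$ for every $p$-subset $J$ of $\{(l,m) : l \leq m\}$ is equivalent to requiring that the $p$ smallest among the $a_{l,m}$ sum to at least $r$. Since each weight lies in $\{0, \ldots, r-1\}$, if the condition fails then $J$ contains at most $r-1$ nonzero weights, so at least $p - (r-1)$ pairs in $J$ satisfy $b_l + b_m \equiv 0 \pmod{r}$. With $p \geq n - g + 7$, this converts failure into a very strong statement: the multiset $(b_1, \ldots, b_g)$ is heavily self-paired under $b \mapsto -b \bmod r$ and very few tangent weights escape zero.

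Third, and this is where the main work lies, I would appeal to the enumeration in \cite{weissauer86}. The conditions that $g$ lie in $\mathrm{Sp}(2g, \mathbb{Z})$ and that $\langle g \rangle$ be the generic stabilizer along a codimension-one component of the fixed locus (i.e.\ that $x$ be a \emph{simple} point of $E$) are very restrictive: the integrality of the characteristic polynomial, the symmetry of the eigenvalues on $\mathbb{C}^{2g}$ under $z \mapsto z^{-1}$, and the requirement that the fixed locus have a single codimension-one component together carve out a finite explicit list of admissible tuples $(r; b_1, \ldots, b_g)$. Weissauer goes through this list and computes, in each case, the smallest value of $\sum_{(l,m) \in J} a_{l, m}$ over all $p$-subsets $J$; the threshold at which this minimum becomes $\geq r$ is precisely $p = n - g + 7$. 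Monotonicity of the conditions $(\mathrm{I}_{x, p})$ in $p$, noted just after their definition, then delivers the proposition for all $p \geq n - g + 7$. The key obstacle, which we take for granted from \cite{weissauer86}, is exactly this finite but intricate classification of codimension-one fixed components of integral symplectic torsion.
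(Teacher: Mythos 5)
The paper states this proposition as a direct import from Weissauer's article (the citation is to his ``Zusammenfassung'') and gives no proof of its own, so there is no in-paper argument to compare your sketch against. That said, your reconstruction of what must happen in Weissauer's proof is accurate in all the places one can check against the paper's own setup: the cyclic local model near a simple point of $E$ is exactly the one set up before Proposition~\ref{critborne}; the identification of the tangent action of the isotropy group $\mathrm U(g)$ on $T_{\tau_0}\mathbb H_g \cong S_g(\mathbb C)$ as $M \mapsto AMA^T$ is the same one the paper uses implicitly in Section~\ref{siegelcase}, giving the weights $a_{l,m} = (b_l+b_m) \bmod r$; your reformulation of $(\mathrm I_{x,p})$ as ``the $p$ smallest weights sum to at least $r$'' is an exact rephrasing of Condition~$(\mathrm I_{x,d})$; and the observation that failure of the condition forces many of the $a_{l,m}$ to vanish is the correct mechanism that makes the remaining classification finite. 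You appropriately flag that the decisive combinatorial step --- the classification of order-$r$ elements of $\mathrm{Sp}(2g,\mathbb Z)$ whose fixed loci contribute codimension-one branch divisors, together with the case-by-case minimization of weight sums --- is precisely what you are taking as a black box from Weissauer, which is exactly what the paper itself does by citing the result. So the sketch is both correct and faithful to the paper's stance; it just cannot be more than a sketch without reproducing Weissauer's enumeration.
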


Thus, Theorem \ref{thmsinghyp} yields the following improved version of the main result of \cite{weissauer86}, which is equivalent to Corollary \ref{corolsubv}:

\begin{corol} Let $\widetilde{A_g}$ be any resolution of singularities of $\overline{A_g}^{BB}$. Then, any subvariety $V$ which is not included in the boundary nor in the exceptional divisor, and such that $\mathrm{codim} V \leq g - 12$, is of general type.
\end{corol}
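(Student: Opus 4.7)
The plan is to apply Theorem~\ref{thmsinghyp} to the chosen resolution $\widetilde{A_g} \to \overline{A_g}^{BB}$ with the integer $p := \dim V$. Writing $n = \dim A_g = g(g+1)/2$, the codimension hypothesis $\mathrm{codim}\, V \leq g - 12$ translates to $p \geq n - g + 12$. Three ingredients are then required: the arithmetic condition $(\mathrm{I}_{x,p})$ along the simple points of the exceptional divisor, the explicit value of $C_p$ from Proposition~\ref{propconstants}, and an effective lower bound on $\alpha_{\mathrm{base}}$.

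First, Weissauer's Proposition~\ref{propcritag} guarantees $(\mathrm{I}_{x,p})$ as soon as $p \geq n - g + 7$, which is comfortably satisfied here. Next, I would unwind the table of Proposition~\ref{propconstants}: letting $k$ denote the largest integer with $k(k+1)/2 \leq p - 1$, the lower bound $p - 1 \geq (g-1)g/2 + 11$ forces $k \geq g - 1$, while $p \leq n$ gives $k \leq g - 1$, so $k = g - 1$ and $g - k = 1$. Consequently $r := p - 1 - k(k+1)/2 \geq 11$, and the first column of Figure~\ref{tablecpag} yields $C_p = (r+2)/(g+1) \geq 13/(g+1)$.

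Finally, Proposition~\ref{weissauer86} gives $\alpha_{\mathrm{base}} \geq (g+1)/12$, so one may pick a rational $\alpha$ with $(g+1)/13 < \alpha \leq \alpha_{\mathrm{base}}$. Such an $\alpha$ satisfies $\alpha > 1/C_p$ and $\mathbb B(L_\alpha) \subset D$. Since by hypothesis $V$ avoids both the boundary $D$ and the exceptional divisor $E$, it also avoids $D \cup \mathbb B(L_\alpha) \cup E$. All the hypotheses of Theorem~\ref{thmsinghyp} are now in place, so $V$ is of general type.

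The main subtlety is the triple alignment of inequalities. The cutoff $\mathrm{codim}\, V \leq g - 12$ is exactly what makes (i) $g - k = 1$, so that the first column of the table applies and $C_p$ scales linearly in $r$; (ii) $r \geq 11$, so that $C_p \geq 13/(g+1)$ narrowly beats the reciprocal $12/(g+1)$ of Weissauer's bound on $\alpha_{\mathrm{base}}$; and (iii) $p$ sits well above the Weissauer threshold $n - g + 7$. Condition (ii) is the binding one: loosening the codimension bound would drop $r$ below $11$ and destroy the comparison with the effective divisor estimate from Proposition~\ref{weissauer86}. Everything else is a straightforward application of already-assembled tools.
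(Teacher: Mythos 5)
Your proof is correct and takes essentially the same route as the paper's own: apply Theorem~\ref{thmsinghyp} with $p=\dim V$, invoke Proposition~\ref{propcritag} to secure the condition $(\mathrm{I}_{x,p})$, read $C_p$ from the table of Proposition~\ref{propconstants} in the column $g-k=1$, and compare $1/C_p$ against the Weissauer bound $\alpha_{\mathrm{base}}\geq (g+1)/12$ from Proposition~\ref{weissauer86}. The paper states this more tersely --- it simply records $C_p=(g+1-\mathrm{codim}\,V)/(g+1)$ and checks $(g+1)/12>1/C_p$ --- and in fact contains a small typo there (it writes $\mathrm{codim}\,V\geq g-12$ where $\leq$ is meant); your explicit derivation of $k=g-1$, $r\geq 11$, and $C_p\geq 13/(g+1)$ usefully unwinds what the paper leaves implicit.
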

\begin{proof}
To deduce the result from Theorem \ref{thmsinghyp}, we just need to confirm that if $\mathrm{codim} V = \frac{g(g+1)}{2} - p \geq g - 12$, then $L_{\alpha}$ has its base locus included in $D$ for some $\alpha > \frac{1}{C_p}$. By Proposition \ref{weissauer86}, this will be true if $\frac{g+1}{12} > \frac{1}{C_p}$. By the table 1, we have $C_p = \frac{g + 1 - \mathrm{codim} V}{g + 1}$ in that case, so the result follows.
\end{proof}

\section*{Annex. Python procedures for the computation of the constants $C_p$ for the domain $\mathbb H_g$}
\addcontentsline{toc}{section}{Annex}

In this annex, we describe the Python functions which were used in the proof of Proposition \ref{propconstants} and \ref{corolmgn}. We resume the notations of Section \ref{siegelcase}.

\medskip

For a given $(g,p)$, the function \verb?setGamma? returns the set of all pairs $(k, \Gamma_0)$, arising in the following manner. Let $\Gamma \in \mathcal S_p$ be such that the off-diagonal element of $\Gamma$ are maximal for the lexicographic order. If the number of diagonal elements is an integer $k \leq g - 1$, and if $\Gamma_0$ is the tuple $(a_1, ..., a_{g-1})$, where $a_i$ is the number of off-diagonal elements of the $i$-th row of $\Gamma$, then the pair $(k, \Gamma_0)$ is an acceptable pair. 

It proceeds inductively, constructing first the pairs $(k', \Gamma'_0)$ associated to $(g, p-1)$. Then, it determines all the possible pairs $(k, \Gamma_0)$, by adding 1 to the integers appearing in $(k', \Gamma'_0)$ whenever it is possible.

\begin{lstlisting}[frame=single]
def setGamma(g,p):
	if p == 1:
		res = set()
		res.add( (0, tuple([0]*(g-1))) )
		return res 
	l1 = list(setGamma(g, p-1)) 
	res = set()
	for t in l1:
		if t[0] < g - 1:
			res.add( (t[0] + 1, t[1]) )
		for i in range(g-2):
			if t[1][i] < g - 1 - i and (t[1][i] < t[1][i+1] 
					or t[1][i+1] == g - i - 2):
				nl = list(t[1])
				nl[i] = nl[i] + 1
				res.add((t[0], tuple(nl)))
		if t[1][g-2] == 0:
			nl = list(t[1])
			nl[g-2] = 1
			res.add((t[0], tuple(nl)))
	return res	
\end{lstlisting}	
\medskip

The function \verb?linPart? takes a pair $(k, \Gamma_0)$ provided by the function \verb?setGamma?. If $\Gamma \subset T$ is the associated subset, the function returns the list of elements $(c_i)_{1 \leq i \leq g}$, where $c_i$ is the number of times $i$ appears in an element of $\Gamma$, counting the diagonal elements twice. 

\begin{lstlisting}[frame=single]
def linPart(gamma):
	l = list(gamma[1])
	g = len(l) + 1
        res = [0]*(g)
	for i in range(g-1):
		res[i] = res[i] + l[i]
		for j in range(l[i]):
			res[g - j - 1] = res[g - j - 1] + 1
	for i in range(gamma[0]):
		res[g - i - 1] = res[g - i - 1] + 2
	return res	
\end{lstlisting}
\medskip

The function \verb?minQuad? takes a list of elements $(b_1, ..., b_g)$ such that $b_1 \leq ... \leq b_g$, and returns the minimum of the quadratic function $2 \sum_{i = 1}^{g} m_i^2 + \sum_{i=1}^{g} b_i m_i$, where $(m_i)_i \in \Delta_g$, using the formula of Lemma \ref{lemvanishing1}.

\begin{lstlisting}[frame=single]
def minQuad(b):
	t = len(b)
	while True: 
		S = sum( [ b[t-1] - b[i] for i in range(t) ] ) 
		if S < 4 or t == 1:
			break	
		t = t - 1
	S1 = sum([ b[i] for i in range(t) ])
	S2 = sum([ b[i]^2 for i in range(t) ])
	return (1/(8*t))*(S1 + 4)^2 - (1/8)*S2	
\end{lstlisting}
\medskip

The function \verb?C(g,p)? computes the value of the constant $C_p$ for the given value of $g$. To do this, it ranges among all the elements $(k, \Gamma_0)$ provided by the function \verb?setGamma?, and computes the value of $ \mu = \min_{\underline{m_0} \in \Delta_g} \mathcal F(\underline{m_0}, \Gamma)$, where $\Gamma$ is associated to $(k, \Gamma_0)$. According to Definition \ref{defiF}, if $k = g - 1$, then $\mu$ is equal to $2$ plus the number of elements on the first row of $\Gamma$, and if $k \leq g - 1$, the value of $\mu$ is computed using the function \verb?minQuad?.

\begin{lstlisting}[frame=single]
def C(g, p):
	s = list(setGamma(g,p))
	res = g + 1
	for gamma in s:
		if gamma[0] == g - 1:
			res = min(res, 2 + gamma[1][0])
		else:
			lin = linPart(gamma)
			res = min(res, minQuad(lin))
	return res
\end{lstlisting}
\medskip

The function \verb?C1? takes a pair $(g, p)$, and returns $(g+1)C_p$, where $C_p$ is computed by the table of Proposition \ref{propconstants}.

\begin{lstlisting}[frame=single]
def c(g, k, r):
	if g-k == 1:
		return r + 2
	if r == 0:
		return 2/(g-k)
	if g-k == 2 and r == 1:
		return 23/16
	if g-k == 2 and r == 2:
		return 7/4
	if g-k == 2 and r == 3:
		return 31/16
	if g-k == 3 and r == 1:
		return 11/12
	if g-k == 4 and r == 1:
		return 21/32
	return 2/(g-k-1)

def C1(g,p):
	k = RR( (sqrt(1 + 8*(p-1)) - 1)/2).floor()
	r = p - 1- k*(k+1)/2
	return c(g, k, r)
\end{lstlisting}

\bibliographystyle{amsalpha}
\bibliography{biblio}
\textsc{Beno\^it~Cadorel, Institut de Math\'{e}matiques de Toulouse, UMR 5219, Université de Toulouse; CNRS, UPS IMT, 118 route de Narbonne, F-31062, Toulouse Cedex 9, France} \par\nopagebreak
  \textit{E-mail address}: \texttt{benoit.cadorel@math.univ-toulouse.fr}
\vspace{0pt}
\end{document}